\newtheorem{theorem}{Theorem}[section]
\newtheorem*{theoremA*}{Theorem A}
\newtheorem*{theoremB*}{Theorem B}
\newtheorem*{theoremC*}{Theorem C}
\newtheorem*{theoremD*}{Theorem D}
\newtheorem{lemma}[theorem]{Lemma}
\newtheorem{proposition}[theorem]{Proposition}
\newtheorem{corollary}[theorem]{Corollary}
\newtheorem{claim}[theorem]{Claim}
\newtheorem{conjecture}[theorem]{Conjecture}
\theoremstyle{definition}
\newtheorem{definition}[theorem]{Definition}
\theoremstyle{remark}
\newtheorem{remark}[theorem]{Remark}
\newtheorem{notation}[theorem]{Notation}
\newtheorem{question}[theorem]{Question}
\numberwithin{theorem}{section}
\numberwithin{equation}{theorem}
\DeclareFontFamily{OMS}{rsfs}{\skewchar\font'60}
\DeclareFontShape{OMS}{rsfs}{m}{n}{<-5>rsfs5 <5-7>rsfs7 <7->rsfs10 }{}
\DeclareSymbolFont{rsfs}{OMS}{rsfs}{m}{n}
\DeclareSymbolFontAlphabet{\scr}{rsfs}
\newcommand{\autoref}{\ref}
\newcommand{\mustata}{Musta{\c{t}}{\u{a}}}
\newcommand{\Z}{\mathbb {Z}}
\DeclareMathOperator{\Spec}{{Spec}}
\newcommand{\bQ}{\mathbb{Q}}
\newcommand{\Q}{\mathbb {Q}}
\newcommand{\ba}{\mathfrak{a}}
\newcommand{\R}{\mathbb {R}}
\newcommand{\bC}{\mathbb{C}}
\DeclareMathOperator{\an}{{an}}
\newcommand{\mydot}{{{\,\begin{picture}(1,1)(-1,-2)\circle*{2}\end{picture}\ }}}
\newcommand{\bH}{\mathbb{H}}
\newcommand{\myR}{{\bf R}}
\newcommand{\myH}{{\bf h}}
\newcommand{\sH}{\scr{H}}
\newcommand{\mJ}{\mathcal{J}}
\newcommand{\Ram}{\mathrm{Ram}}
\DeclareMathOperator{\Tr}{Tr}
\DeclareMathOperator{\sHom}{{\sH}om}
\newcommand{\tensor}{\otimes}
\newcommand{\sL}{\scr{L}}
\newcommand{\bZ}{\mathbb{Z}}
\newcommand{\DuBois}[1]{{\underline \Omega {}^0_{#1}}}
\newcommand{\tld}{\widetilde }
\DeclareMathOperator{\red}{red}
\newcommand{\cf}{{\itshape cf.} }
\newcommand{\bF}{\mathbb{F}}
\newcommand{\Sch}{\mathrm{Sch}}
\newcommand{\Shv}{\mathrm{Shv}}
\newcommand{\Nis}{\mathrm{Nis}}
\newcommand{\mR}{\mathrm{R}}
\newcommand{\perf}{\mathrm{perf}}
\newcommand{\colim}{\mathop{\mathrm{colim}}}
\newcommand{\calO}{{\mathcal{O}}}
\newcommand{\cO}{{\mathcal{O}}}
\renewcommand{\myR}{\mR}
\newcommand{\infinity}{\infty}
\newcommand{\ie}{{\itshape ie} }
\DeclareMathOperator{\Image}{Im}
\newcommand{\sN}{\scr{N}}
\DeclareMathOperator{\Div}{{div}}
\newcommand{\blank}{\underline{\hskip 10pt}}
\renewcommand{\O}{\cO}
\title[The weak ordinarity conjecture and $F$-singularities]{The weak ordinarity conjecture and $F$-singularities}
\author[B. Bhatt]{Bhargav Bhatt}
\author[K. Schwede]{Karl Schwede}
\author[S. Takagi]{Shunsuke Takagi}
\address{Department of Mathematics, University of Michigan, Ann Arbor, MI 48109, USA}
\address{Department of Mathematics, University of Utah, 155 South 1400 East, Salt Lake City, UT 84112, USA}
\address{Graduate School of Mathematical Sciences, University of Tokyo 3-8-1 Komaba Meguro-ku, Tokyo 153-8914, Japan}
\email{bhargav.bhatt@gmail.com}
\email{schwede@math.utah.edu}
\email{stakagi@ms.u-tokyo.ac.jp}
\subjclass[2010]{14F18, 13A35, 14J17, 14B05, 14F20}
\keywords{multiplier ideal, test ideal, Du Bois, $F$-injective, ordinary variety}
\begin{document}

\begin{abstract}

Recently M.~\mustata{} and V.~Srinivas related a natural conjecture about the Frobenius action on the cohomology of the structure sheaf after reduction to characteristic $p > 0$ with another conjecture connecting multiplier ideals and test ideals.  We generalize this relation to the case of singular ambient varieties.  Additionally, we connect these results to a conjecture relating $F$-injective and Du Bois singularities.  Finally, using an unpublished result of Gabber, we also show that $F$-injective and Du Bois singularities have a common definition in terms of smooth hypercovers.

\end{abstract}

\maketitle

\section{Introduction}

In \cite{MustataSrinivasOrdinary,MustataOrdinary2} \mustata{}  and Srinivas related a very natural open problem on ordinarity of smooth projective varieties after reduction to characteristic $p > 0$, with another open question on the connection between multiplier ideals and test ideals after reduction to characteristic $p > 0$.

We need some notation to state these conjectures.
Let $X$ be a scheme of finite type over a field $k$ of characteristic zero. Choosing a suitable finitely generated $\Z$-subalgebra $A$ of $k$, we can construct a scheme $X_A$ of finite type over $A$ such that $X_A \otimes_A k \cong X$.
We refer $X_A$ as a model of $X$ over $A$ and denote by $X_s$ the fiber of $X_A$ over a closed point $s \in \Spec A$.
See the paragraph just before Proposition \ref{prop.TestIdealsCoincidesWithMultiplierIdeals} for more details on reducing from characteristic zero to positive characteristic.
The first of the conjectures is then stated as follows.
\begin{conjecture}\label{MS conj} \textnormal{(The weak ordinarity conjecture \cite[Conjecture 1.1]{MustataSrinivasOrdinary})}
Let $V$ be an $n$-dimensional smooth projective variety over a field $k$ of characteristic zero.
Given a model of $V$ over a finitely generated $\Z$-subalgebra $A$ of $k$, there exists a Zariski-dense set of closed points $S  \subseteq \Spec A$ such that the action induced by Frobenius on $H^n(V_s, \mathcal{O}_{V_s})$ is bijective for every $s \in S$.
\end{conjecture}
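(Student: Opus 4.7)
The plan is to acknowledge up front that \autoref{MS conj} is a well-known open problem, and to sketch the natural cohomological strategy together with the obstacles; I would not attempt a full proof. The first step is to reinterpret the Frobenius action on $H^n(V_s,\O_{V_s})$ through crystalline cohomology. Since $V_s$ is smooth projective over a perfect field of characteristic $p$, there is a canonical Frobenius-equivariant identification (given by the Hodge--de Rham--crystalline comparison, together with the degeneration of Hodge-to-de~Rham) of $H^n(V_s,\O_{V_s})$ with the top-Hodge graded piece of $H^n_{\crys}(V_s/W)\otimes_W k(s)$; the Frobenius on $\O_{V_s}$ corresponds to the induced semilinear operator on this graded piece, and its bijectivity mod $p$ is equivalent to the statement that the Newton polygon of $H^n_{\crys}(V_s/W)$ has its top slope equal to $0$ with multiplicity $h^{n,0}(V)$, i.e., that $V_s$ is \emph{weakly ordinary} in the sense of Bloch--Kato.

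Having made this reformulation, the next step is to invoke Grothendieck's specialization theorem: the Newton polygon jumps up under specialization, so the locus
\[
\{\, s\in \Spec A : V_s \text{ is weakly ordinary}\,\}
\]
is open in $\Spec A$. The conjecture therefore reduces to showing that this locus is nonempty, equivalently to computing the generic Newton polygon of $H^n_{\crys}$ for a suitable spreading-out and checking that its top slope is $0$ with the right multiplicity. The remaining task is to produce at least one weakly ordinary fiber. This can be done in several cases using auxiliary structure: if $V$ is an abelian variety, one invokes Serre's theorem that the ordinary reduction primes are Zariski-dense; if $V$ is a curve or admits a dominant map from an abelian variety, or if $V$ has a toric/Fano structure with a suitable Frobenius splitting lift, one can likewise conclude by transporting weak ordinarity from a simpler source; and for $V$ whose Albanese map is dominant and generically finite, a pushforward argument applied to Serre's result handles $H^n$.

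The main obstacle is what remains after these reductions: for a general smooth projective $V$ with trivial or mysterious Albanese (for instance a simply connected Calabi--Yau threefold, or a general hypersurface in $\PP^{n+1}$ of degree close to $n+2$), there is no known geometric construction that produces a weakly ordinary fiber. In crystalline terms one is asking whether the generic Newton polygon of the $F$-crystal $H^n_{\crys}$ has top slope $0$, and this is genuinely outside the range of current techniques: it is closely related to the Tate conjecture for $V_s$ and to questions of Mazur--Ogus type on when Newton equals Hodge at a dense set of primes. Thus my proposal is, in effect, to record that the cohomological reformulation via crystalline cohomology and Newton polygons is the correct framework, to dispose of the cases where one has enough auxiliary structure (abelian varieties, curves, and varieties dominated by these), and then to isolate the problem as the assertion that the generic crystalline Newton polygon of an arbitrary smooth projective $V$ is as small as possible at the top slope; this last assertion is the genuinely hard open part, and it is precisely what motivates the equivalent formulations in terms of multiplier/test ideals and Du~Bois/$F$-injective singularities studied in the remainder of the paper.
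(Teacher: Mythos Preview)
The statement you have been asked about is \autoref{MS conj}, which is a \emph{conjecture}, not a theorem; the paper does not prove it and never claims to. The entire paper treats this conjecture as a standing hypothesis and establishes its equivalence with \autoref{test ideal conj} (Theorem~A) and with \autoref{DB conj} (Theorem~B). So there is no ``paper's own proof'' to compare your proposal against, and your instinct to acknowledge the conjecture as open and to sketch the surrounding landscape rather than attempt a proof is exactly the right response.

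That said, a couple of the details in your sketch are inaccurate and should be fixed if this discussion is to appear anywhere. First, the bijectivity of Frobenius on $H^n(V_s,\O_{V_s})$ corresponds to the \emph{slope-$0$} part of the Newton polygon of $H^n_{\crys}(V_s/W)$ having the maximal possible length $h^{0,n}(V)$; your phrase ``top slope equal to $0$'' is self-contradictory (the slopes lie in $[0,n]$, and the relevant one here is the smallest, not the largest). Second, your assertion that for abelian varieties ``one invokes Serre's theorem that the ordinary reduction primes are Zariski-dense'' overstates what is known: density of ordinary primes for abelian varieties is itself an open problem in general (it is known for elliptic curves, for abelian surfaces, and under various additional hypotheses, but not unconditionally), so this case cannot simply be disposed of. The same caution applies to the varieties you build from abelian varieties via Albanese maps. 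These points do not affect your overall framing, which correctly identifies the problem as open and isolates the genuine difficulty, but they should be corrected.
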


Since the ordinarity of  $V_s$ in the sense of \cite{BlochKatoPAdicEtale} implies that the action of Frobenius on $H^n(V_s, \mathcal{O}_{V_s})$ is bijective (see \cite[Remark 5.1]{MustataSrinivasOrdinary}) and the converse does not hold in general (see \cite[Example 5.7]{JoshiRajanOrdinarity}), we call Conjecture \ref{MS conj} as the weak ordinarity conjecture.

\begin{remark}
\label{rem.FinitelyManySmoothIsOk}
As pointed out in \cite[Remark 5.2]{MustataSrinivasOrdinary}, if Conjecture \ref{MS conj} holds, then we can also find a dense set of closed points $S \subseteq \Spec A$ that satisfies the condition of the conjecture for any set of finitely many varieties $X^{(i)}$ over $k$.

In order to prove Conjecture \ref{MS conj} it is sufficient to assume that $k$ is algebraically closed.  Indeed, the reduction to characteristic $p > 0$ process does not care differentiate between $X$ and $X \times_k {\overline{k}}$.
Hence we assume that $k$ is algebraically closed as necessary.
\end{remark}

We now state a variant of the conjectured relationship between test ideals and multiplier ideals under reduction to characteristic $p > 0$.  The difference between this conjecture and the one presented in \cite{MustataSrinivasOrdinary,MustataOrdinary2} is that $X$ is not smooth and we also include a boundary $\bQ$-divisor $\Delta$.

\begin{conjecture}\label{test ideal conj}\textnormal{(A generalization to singular $X$ of \cite[Conjecture 1.2]{MustataSrinivasOrdinary})}
Let $X$ be a normal variety over a field $k$ of characteristic zero.
Suppose that $\Delta$ is a $\Q$-divisor on $X$ such that $K_X+\Delta$ is $\Q$-Cartier, and $\ba$ is a nonzero ideal on $X$.
Given a model of $(X, \Delta, \ba)$ over a finitely generated $\Z$-subalgebra $A$ of $k$, there exists a Zariski-dense set of closed points $S  \subseteq \Spec A$ such that
\begin{equation}\label{test ideal=mult ideal}
\tau(X_s, \Delta_s, \ba_s^{\lambda})=\mathcal{J}(X, \Delta, \ba^{\lambda})_s \textup{ }
\end{equation}
for all $\lambda \in \R_{\ge 0}$ and all $s \in S$.
Furthermore, if we have finitely many triples $(X_i, \Delta_i, \ba_i)$ as above and corresponding models over $A$, then there is a dense subset of closed points in $\Spec A$ such that $(\ref{test ideal=mult ideal})$ holds for each of these triples.
\end{conjecture}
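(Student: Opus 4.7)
The plan is to deduce \autoref{test ideal conj} from the weak ordinarity conjecture \autoref{MS conj}, generalizing the strategy of Mustata-Srinivas to the setting of a singular ambient variety with a boundary $\Q$-divisor.

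First, I would fix a log resolution $\pi \colon Y \to X$ that simultaneously resolves the singularities of $X$, the ideal $\ba$, and $\Delta$. Write $\ba\cdot\O_Y=\O_Y(-G)$ with $G$ an effective Cartier divisor, and arrange that $\mathrm{Exc}(\pi)\cup G\cup \pi^{-1}(\mathrm{Supp}\,\Delta)$ has simple normal crossings support. Then the multiplier ideal admits the classical expression
\[
\sJ(X,\Delta,\ba^{\lambda}) \;=\; \pi_*\O_Y\!\left(\lceil K_Y-\pi^{*}(K_X+\Delta)-\lambda G\rceil\right).
\]
This data, in particular $\pi$, $G$, and the relative canonical divisor $K_Y-\pi^{*}(K_X+\Delta)$, spreads out over a finitely generated $\Z$-subalgebra $A$ of $k$; I would then work with the reductions $\pi_s\colon Y_s\to X_s$ at closed points $s\in\Spec A$.

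Next, I would dispose of the easy inclusion. For a dense set of $s$, the containment $\tau(X_s,\Delta_s,\ba_s^{\lambda})\subseteq\sJ(X,\Delta,\ba^{\lambda})_s$ is already known (results of Hara--Yoshida, Takagi, and Schwede--Tucker), so the real content is the reverse. Using the Cartier-operator/Frobenius-trace description of $\tau(X_s,\Delta_s,\ba_s^{\lambda})$ as the image of $(\pi_s)_*F^{e}_*$ of an appropriate line bundle on $Y_s$, the reverse containment reduces to the eventual surjectivity (for $e\gg 0$) of a certain Frobenius trace map. Via Grothendieck duality, this surjectivity translates into bijectivity of the Frobenius action on the top cohomology $H^{\dim Y}(\widetilde Y_s,\O_{\widetilde Y_s})$ of an auxiliary smooth projective variety $\widetilde Y$ obtained by compactifying $Y$, passing to a cyclic cover that integralizes the $\Q$-coefficients of $\pi^{*}(K_X+\Delta)$, and resolving. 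Applying \autoref{MS conj} to the finite collection of auxiliary varieties that appear, and invoking \autoref{rem.FinitelyManySmoothIsOk}, then furnishes a single Zariski-dense set $S\subseteq\Spec A$ on which \eqref{test ideal=mult ideal} holds for every $\lambda$ and every $s\in S$.

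The main obstacle, compared with the smooth case treated in \cite{MustataSrinivasOrdinary}, is twofold. First, because $K_X+\Delta$ is merely $\Q$-Cartier, one must actually construct and control the cyclic cover above before one can invoke \autoref{MS conj}, and verify that a resolution of that cover carries the geometric/Frobenius description of the test-ideal subsheaf one needs. Second, the exceptional divisor of $\pi$ contributes non-trivially through $\pi_*$, so one must ensure that the higher direct images $R^{i}\pi_{*}$ of the line bundles in play behave well enough—ideally vanish, or are at least controlled by Frobenius surjectivity upstairs—so that the problem genuinely reduces to top cohomology, which is where the weak ordinarity hypothesis has leverage. I expect this bookkeeping of the exceptional locus, rather than the cyclic-cover construction, to be the technical heart of the argument.
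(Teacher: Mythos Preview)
Your outline diverges from the paper's actual argument in a way that matters, and the divergence is precisely where the real difficulty lies.

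You propose to take a cyclic cover of the \emph{resolution} $Y$ to integralize the $\Q$-coefficients of $\pi^{*}(K_X+\Delta)$, resolve again, and then invoke weak ordinarity on that auxiliary smooth projective variety. This is essentially the route of \cite{MustataSrinivasOrdinary} in the smooth case, and it is exactly the route that requires semi-stable reduction to control what happens after the cyclic cover and further resolution. The paper stresses in the introduction that avoiding semi-stable reduction was the key obstruction to extending Musta\c{t}\u{a}--Srinivas to singular $X$; your proposal walks straight into that obstruction rather than around it. Moreover, your description of $\tau(X_s,\Delta_s,\ba_s^{\lambda})$ as ``the image of $(\pi_s)_*F^{e}_*$ of an appropriate line bundle on $Y_s$'' is not available in the singular setting: that is a description of the multiplier ideal, and identifying it with the test ideal is precisely the content of the theorem, not an input.

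What the paper does instead is take finite covers of the \emph{base} $X$, not of the resolution. First a finite cover $\alpha\colon W\to X$ makes $\alpha^{*}(K_X+\Delta)$ Cartier; the transformation rules for $\mJ$ and $\tau$ under finite maps (\autoref{lem.PropertiesOfMultiplier}(iii), \autoref{lem.PropertiesOfTest}(iii)) then eliminate $\Delta$ entirely and reduce to comparing $\mJ(\omega_X,f^{\lambda})$ with $\tau(\omega_{X_s},f_s^{\lambda})$. A second finite cover $\beta\colon Z\to X$ extracts an $m$th root of $f$ so that the relevant jumping number becomes an integer. The core step (\autoref{lem.KeyCase}) then uses no cyclic covers at all: one applies the Musta\c{t}\u{a}--Srinivas surjectivity \autoref{thm.MustataSrinivasSurjectivityRestated} directly to a log resolution of $(X,f)$, and closes the argument with Cartier module theory---specifically, the $\sigma$-construction of \autoref{def.SigmaCartier}, which sandwiches $\tau(\omega_{X_s},f_s^{a})$ between $\mJ(\omega_X,f^{a})_s$ and $\sigma(\tau(\omega_{X_s}),\Tr_{a_1 D_s})$. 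You anticipated that bookkeeping of the exceptional locus would be the heart of the matter; in fact the heart is this Cartier-module sandwich, which is absent from your outline.
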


Our first main result generalizes the main result of \cite{MustataSrinivasOrdinary}.

\begin{theoremA*}{\textnormal{(Theorem \ref{thm.EquivalentConjectures})}}
Conjecture \ref{test ideal conj} holds if and only if Conjecture \ref{MS conj} holds.
\end{theoremA*}

It is worth noting that we do not employ semi-stable reduction, the use of which was the key obstruction to generalizing the results of \cite{MustataSrinivasOrdinary} to the case of singular varieties.

We also explore the relation between $F$-injective and Du Bois singularities from two different perspectives.  Du Bois singularities are Hodge theoretic in origin, and were coined by Steenbrink \cite{SteenBrinkDuBoisReview} based upon the work of Deligne and Du~Bois \cite{DeligneHodgeIII,DuBoisMain}.  Roughly speaking, Du Bois singularities are the local condition on a proper variety over $\bC$ that naturally guarantees that the map $H^i(X^{\an}, \mathbb{C}) \to H^i(X^{\an}, \cO_{X^{\an}})$ is surjective for all $i > 0$ (see \cite{KovacsTheIntuitiveDefinitionOfDuBois} for additional discussion).  On the other hand, a variety $X$ in characteristic $p > 0$ is called $F$-injective if for every point $x \in X$, the Frobenius map on the local cohomology module $H^i_{x}(\cO_{X,x}) \to H^i_{x}(\cO_{X,x})$ is injective for each $i$.
It is a result of the second author \cite{SchwedeFInjectiveAreDuBois} that if $X$ has dense $F$-injective type\footnote{meaning its reduction to positive characteristic is $F$-injective for a Zariski-dense set of closed points $S \subseteq \Spec A$. See Definition \autoref{def.DuBoisSingularities} for the detail.} then $X$ has Du~Bois singularities.  We point out that once Conjecture \autoref{MS conj} is assumed, it easily follows from the methods of \cite{MustataSrinivasOrdinary} that the converse holds as well.  In fact, we even have the following:

\begin{theoremB*} {\textnormal{(Theorem \autoref{thm.DuBoisImpliesFInjectiveType})}}
Conjecture \autoref{MS conj} holds if and only if for every Du Bois scheme of finite type over a field of characteristic zero has dense $F$-injective type.
\end{theoremB*}

We also give a philosophical explanation for why $F$-injective and Du Bois singularities are related: they have a common definition.

\begin{theoremC*} {\textnormal{(Theorem \autoref{thm.DuBoisFinjectiveUnified})}}
Suppose that $X$ is a reduced scheme of finite type over a field $k$.  Then for any proper hypercover with smooth terms $\pi_{\mydot} : X_{\mydot} \to X$, consider the canonical map
\begin{equation}
\label{eq.LocalCohomologyDuBoisThmC}
H^i_{z}(\cO_X) \to \bH^i_z(\myR (\pi_{\mydot})_* \cO_{X_{\mydot}})
\end{equation}
for each $i$ and each closed point $z \in X$.
\begin{itemize}
\item{} Suppose $k$ is of characteristic zero, then $X$ has Du Bois singularities if and only if \eqref{eq.LocalCohomologyDuBoisThmC} injects for each $\pi_{\mydot}$, $i$ and $z$.
\item{} Suppose $k$ is of characteristic $p > 0$ and $F$-finite, then $X$ has $F$-injective singularities if and only if  \eqref{eq.LocalCohomologyDuBoisThmC} injects for each $\pi_{\mydot}$, $i$ and $z$.
\end{itemize}
\end{theoremC*}
In characteristic zero, this is not new.  It was observed by Kov\'acs in \cite{KovacsDuBoisLC1} (in fact, the map \eqref{eq.LocalCohomologyDuBoisThmC} is always surjective by \cite{KovacsSchwedeDBDeforms}). The proof of Theorem C relies on an unpublished result of O.~Gabber. To explain this result, note first that the direct limit of $\myR(\pi_\mydot)_* \cO_{X_\mydot}$ over all proper hypercovers of $X$ computes the (derived) pushforward of the sheafification of $\cO$ for Voevodsky's $h$-topology down to the Zariski topology.  Work of Deligne and Du Bois (see also \cite{HuberJorder,Lee}), which lies at the heart of Kov\'acs' observation mentioned above, identifies this pushforward in characteristic zero: it can be computed as the pushforward of the structure sheaf from {\em any} fixed proper hypercover with smooth terms of $X$.  Gabber's result simply gives the characteristic $p$ counterpart of this identification:

\begin{theoremD*}[O.~Gabber, Theorem \autoref{GabberTheorem}]
The $h$-sheafification of the structure sheaf in characteristic $p$ coincides with its perfection, and this identification extends to cohomology. In particular, for any excellent characteristic $p$ scheme $X$ and any $i \geq 0$, one has a natural isomorphism
\begin{small}
\[
\lim_{\rightarrow} H^i(X_\mydot, \cO_{X_{\mydot}}) \simeq \lim_{\rightarrow} \Big(H^i(X,\cO_X) \stackrel{F^*}{\to} H^i(X,\cO_X) \stackrel{F^*}{\to} H^i(X,\cO_X) \to \dots \Big).
\]
\end{small}
Here the limit runs over all proper hypercovers $\pi_{\mydot} : X_{\mydot} \to X$, and $F^*$ denotes the pullback along the Frobenius endomorphism of $X$.
\end{theoremD*}

\vskip 12pt
\noindent
\begin{small}
{\it Acknowledgements:}
The authors thank O.~Gabber for explaining his above result to us and allowing us to include it in this work.
The authors would like to thank Manuel Blickle, S\'andor Kov\'acs, Shrawan Kumar, Vikram Mehta, Mircea \mustata{} and Kevin Tucker for useful discussions.  We would like to thank Christohper Hacon and Linquan Ma for pointing out a mistake in our original proof of Lemma 3.9. We also thank the referee for many useful comments.
The first author was partially supported by the NSF grants DMS \#1128155 and DMS \#1340424.
The second author was partially supported by the NSF grant DMS \#1064485, NSF FRG Grant DMS \#1265261/1501115, NSF CAREER Grant DMS \#1252860/1501102  and by a Sloan Fellowship.
The third author was partially supported by Grant-in-Aid for Young Scientists (B) 23740024 and for Scientific Research (C) 26400039 from JSPS.
The material is based upon work supported by the National Science Foundation under Grant No. 0932078 000, initiated while the authors were in residence at the Mathematical Science Research Institute (MSRI) in Berkeley, California, during the spring semester 2013.
\end{small}
\section{Preliminaries}

Throughout this paper, all schemes are Noetherian and all morphisms of schemes are separated.
In this language, a variety is a reduced irreducible algebraic scheme of finite type over a field.

We first recall definitions of multiplier ideals and modules.
Our main reference for this is \cite{LazarsfeldPositivity2}.

\begin{definition}[Multiplier ideals and modules]
Suppose that $X$ is a normal variety over a field of characteristic zero.
Further, suppose that $\Delta$ is an effective $\bQ$-divisor, $\ba$ is an ideal sheaf and $t \geq 0$ is a real number.
Let $\pi : Y \to X$ be a proper birational morphism with $Y$ normal such that $\ba \cdot \cO_Y = \cO_Y(-G)$ is invertible, and we assume that $K_X$ and $K_Y$ agree wherever $\pi$ is an isomorphism.
\begin{itemize}
\item[(a)]  If $K_X + \Delta$ is $\bQ$-Cartier, we assume that $\pi$ is a log resolution of $(X, K_X + \Delta, \ba)$.  Then we define the \emph{multiplier ideal} to be
\[
\hspace*{2em} \mJ(X, \Delta, \ba^t) = \pi_* \cO_Y(\lceil K_Y - \pi^*(K_X + \Delta) - t G\rceil) \subseteq \cO_X.
\]
\item[(b)]  If $\Delta$ is $\bQ$-Cartier, we assume that $\pi$ is a log resolution of $(X, \Delta, \ba)$.  Then we define the \emph{multiplier module} to be
\[
\mJ(\omega_X, \Delta, \ba^t) = \pi_* \cO_Y(\lceil K_Y - \pi^*\Delta  - t G\rceil) \subseteq \omega_X.
\]
\end{itemize}
Both of these objects are independent of the choice of a resolution.
If $\Delta$ is not effective, one can still define the multiplier ideal and module by the same formulas as above, and they are then fractional ideals.
\end{definition}

\begin{remark}
In characteristic $p > 0$ and even in mixed characteristic, we will frequently be in the situation where we also have a fixed log resolution $\pi : Y \to X$ of all the relevant terms.  In such a situation, we will use $\mJ(X, \Delta, \ba^t)$ and $\mJ(\omega_X, \Delta, \ba^t)$ to denote the multiplier ideal and modules with respect to that fixed log resolution.
\end{remark}

\begin{lemma}
\label{lem.PropertiesOfMultiplier}
With notation as above:
\begin{itemize}
\item[(i)]  $\mJ(X, \Delta, \ba^t) = \mJ(\omega_X, K_X + \Delta, \ba^t)$.
\item[(ii)]  If $D$ is a Cartier divisor, then
\[
\mJ(X, \Delta +D, \ba^t) = \mJ(X, \Delta, \ba^t) \otimes \cO_X(-D).
\]
\item[(iii)]  Suppose that $f : Y \to X$ is a finite surjective morphism with $Y$ normal.
Let $\Ram_{f}$ denote the ramification divisor of $f$ and $\Tr_{f} : f_* K(Y) \to K(X)$ the trace map.  Then
\[
\hspace*{2em}
\begin{array}{rl}
\mJ(X, \Delta, \ba^t)\hspace*{-0.7em}&=\Tr_{f}\big(f_* \mJ(Y, \pi^* \Delta - \Ram_f, (\ba \cO_Y)^t)\big)\\
&=\big(f_* \mJ(Y, \pi^* \Delta - \Ram_f, (\ba \cO_Y)^t)\big) \cap K(X),\\
\mJ(\omega_X, \Delta, \ba^t)\hspace*{-0.7em}&=\Tr_{f}\big(f_* \mJ(\omega_Y, \pi^* \Delta, (\ba \cO_Y)^t)\big)\\
&=\big(f_* \mJ(\omega_Y, \pi^* \Delta, (\ba \cO_Y)^t)\big) \cap K(X).
\end{array}
\]
\end{itemize}
\end{lemma}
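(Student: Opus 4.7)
The plan is to tackle (i) and (ii) as essentially formal consequences of the definitions, and then spend most of the effort on (iii).

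For (i), I would simply unwrap the two definitions using a common log resolution $\pi\colon Y\to X$: the formula
$\mJ(X,\Delta,\ba^t)=\pi_*\O_Y(\lceil K_Y-\pi^*(K_X+\Delta)-tG\rceil)$
is literally the same as $\mJ(\omega_X,K_X+\Delta,\ba^t)=\pi_*\O_Y(\lceil K_Y-\pi^*(K_X+\Delta)-tG\rceil)$, so (i) is formal. For (ii), since $D$ is Cartier the pullback $\pi^*D$ is an honest integral divisor, so $\lceil K_Y-\pi^*(K_X+\Delta+D)-tG\rceil=\lceil K_Y-\pi^*(K_X+\Delta)-tG\rceil-\pi^*D$. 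Applying the projection formula to $\pi_*(\mydot\otimes\pi^*\O_X(-D))$ yields (ii).

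For (iii), I would first build a commutative square of log resolutions. Fix a log resolution $\pi_X\colon X'\to X$ of $(X,\Delta,\ba)$, let $\tilde Y$ be the normalization of the dominant component of $Y\times_X X'$, and further blow up to obtain $\pi_Y\colon Y'\to Y$, a log resolution of $(Y,f^*\Delta-\Ram_f,\ba\O_Y)$, sitting in a square with $\tilde f\colon Y'\to X'$ finite surjective. We may arrange canonical divisors compatibly so that $K_{Y'}=\tilde f^*K_{X'}+\Ram_{\tilde f}$, and $\ba\O_{Y'}=\O_{Y'}(-\tilde f^*G_X)$ where $G_X$ is the effective divisor defining $\ba\O_{X'}$. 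Setting $D_X:=K_{X'}-\pi_X^*(K_X+\Delta)-tG_X$ (respectively $E_X:=K_{X'}-\pi_X^*\Delta-tG_X$ for the multiplier module version) and using $K_Y=f^*K_X+\Ram_f$, a direct computation gives
\[
\lceil K_{Y'}-\pi_Y^*(K_Y+f^*\Delta-\Ram_f)-t\tilde f^*G_X\rceil=\Ram_{\tilde f}+\lceil \tilde f^*D_X\rceil,
\]
and similarly with $E_X$ in place of $D_X$ for the $\omega$-version. So the task reduces to proving, for every $\bQ$-divisor $D$ on $X'$, the identity
\[
\Tr_{\tilde f}\bigl(\tilde f_*\O_{Y'}(\Ram_{\tilde f}+\lceil \tilde f^*D\rceil)\bigr)=\O_{X'}(\lceil D\rceil)
\]
and the analogous statement for intersection with $K(X')$. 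After pushing forward by $\pi_X$ and using functoriality $(\pi_X)_*\tilde f_*=f_*(\pi_Y)_*$, both halves of (iii) will follow.

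The main obstacle and the content of the proof live in this last identity, which I would establish by a codimension-one local analysis. At a codimension-one point $P\in X'$ with preimages $Q_1,\dots,Q_r$ in $Y'$ of ramification indices $e_i$, the different at $Q_i$ has the form required so that, for $a\in\bQ$, one checks $\lceil\lceil e_ia\rceil/e_i\rceil=\lceil a\rceil$, hence the trace on a DVR extension carries $\pi_i^{-(\lceil e_ia\rceil+d_i)}\cdot\mathrm{unit}$ into $\pi^{-\lceil a\rceil}\cdot A$. This gives the containment $\Tr_{\tilde f}\subseteq\O_{X'}(\lceil D\rceil)$. The reverse inclusion follows from the projection formula $\tilde f_*(\tilde f^*\O_{X'}(\lceil D\rceil)\otimes\O_{Y'}(\Ram_{\tilde f}))=\O_{X'}(\lceil D\rceil)\otimes\tilde f_*\O_{Y'}(\Ram_{\tilde f})$ together with surjectivity of $\tilde f_*\O_{Y'}(\Ram_{\tilde f})\to\O_{X'}$, which holds in characteristic zero because $\tilde f$ is generically étale and the trace splits off $\frac{1}{\deg \tilde f}\cdot\tilde f^*$. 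The intersection-with-$K(X)$ formulation then follows automatically: for $s\in K(X)$ one has $\Tr_f(s)=\deg(f)\cdot s$, which combined with the image formula yields both inclusions.
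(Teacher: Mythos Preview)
The paper does not actually prove this lemma: it is stated without proof as a standard fact, with the implicit reference being \cite{LazarsfeldPositivity2} (and, for part (iii), the well-known behavior of multiplier ideals under finite covers). So there is no ``paper's proof'' to compare against; your write-up is essentially supplying what the authors left to the literature.

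Your treatment of (i) and (ii) is correct and is exactly how one argues.

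For (iii), your overall strategy and your local DVR computation are the right ones, but there is a genuine gap in the setup. You write: ``further blow up to obtain $\pi_Y\colon Y'\to Y$, a log resolution \dots\ sitting in a square with $\tilde f\colon Y'\to X'$ finite surjective.'' This is not possible in general: once you blow up $\tilde Y$ to make $Y'$ smooth, the composite $Y'\to \tilde Y\to X'$ is proper birational followed by finite, hence generically finite but \emph{not} finite unless the blowup was trivial. Your subsequent argument uses finiteness of $\tilde f$ (to speak of $\Ram_{\tilde f}$ and to do the codimension-one trace computation) and smoothness of $Y'$ (to know that $\pi_Y$ computes the multiplier ideal on $Y$); you cannot have both at once with this construction.

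The standard fix is as follows. Choose $\pi_X\colon X'\to X$ to be a log resolution not only of $(X,\Delta,\ba)$ but also of the branch locus of $f$. Let $Y'$ be the normalization of $X'$ in $K(Y)$, so $\tilde f\colon Y'\to X'$ is finite with SNC branch locus. By Abhyankar's lemma (characteristic zero), $Y'$ then has only abelian quotient singularities and the relevant divisors are toroidal; in particular $Y'\to Y$, while not a log resolution, still computes $\mJ(Y,f^*\Delta-\Ram_f,(\ba\O_Y)^t)$ and $\mJ(\omega_Y,f^*\Delta,(\ba\O_Y)^t)$ via the same round-up formula (equivalently, push forward from a further log resolution $Y''\to Y'$ and use that quotient singularities are klt). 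With $Y'$ in hand, your identity
\[
\Tr_{\tilde f}\bigl(\tilde f_*\O_{Y'}(\Ram_{\tilde f}+\lceil \tilde f^*D\rceil)\bigr)=\O_{X'}(\lceil D\rceil)
\]
and your codimension-one verification go through verbatim, since they only require $X'$ smooth, $Y'$ normal, and $\tilde f$ finite. The rest of your argument (projection formula, surjectivity of trace in characteristic zero, and the $\deg(f)$ trick for the intersection-with-$K(X)$ formulation) is fine.
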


We now recall the definitions of test ideals and modules.
The reader is referred to \cite{BlickleSchwedeTakagiZhang} and \cite{SchwedeTuckerTestIdealSurvey} for details.

\begin{definition}
Suppose that $X$ is an algebraic variety over an $F$-finite field of characteristic $p > 0$.
Further, suppose that $\Delta$ is an effective $\bQ$-divisor, $\ba$ is an ideal sheaf and $t \geq 0$ is a real number.  Let $F^e : X \to X$ denote the $e^{\rm th}$ iteration of the absolute Frobenius morphism.
\begin{itemize}
\item[(a)]  We define the \textit{test ideal} $\tau(X, \Delta, \ba^t)$ to be the smallest nonzero ideal sheaf $J \subseteq \cO_X$ which satisfies the following condition locally.
For every $e \ge 0$ and every map $$\phi \in \sHom_{\cO_X}(F^e_* \cO_X(\lceil (p^e - 1)\Delta \rceil), \cO_X),$$ we have
    \[
    \phi\big(F^e_*(\ba^{\lceil t(p^e - 1) \rceil} J)\big) \subseteq J.
    \]
\item[(b)]  Suppose that $(p^{e_0} - 1)\Delta$ is Cartier for some $e_0$ (this is equivalent to requiring that $\Delta$ is $\Q$-Cartier and $p$ does not divide the index of $\Delta$).  We define the \textit{test module} $\tau(\omega_X, \Delta, \ba^t)$ to be the smallest nonzero submodule $J \subseteq \omega_X$ satisfying the following condition.
For all positive multiples $e$ of $e_0$, we have
    \[
    T^e\big(F^e_* \big(\ba^{\lceil t(p^e - 1)\rceil} \cdot \cO_X ((1-p^e) \Delta) \cdot J \big) \big) \subseteq J,
    \]
    where $T^e : F^e_* \omega_X \to \omega_X$ is Grothendieck's trace map.
\end{itemize}
\end{definition}

\begin{remark}
It is also easy to define $\tau(\omega_X, \Delta, \ba^t)$ when the index of $\Delta$ is divisible by $p$.  The above definition will be simplest for us though.
\end{remark}

Test ideals and modules are special types of Cartier modules.

\begin{definition}[Cartier modules, \cite{BlickleBockleCartierModulesFiniteness}]
Recall that a \emph{Cartier module} is a coherent $\cO_X$-module $M$ with a given map $\phi : F^e_* (M \tensor \sL) \to M$ for some line bundle $\sL$ and integer $e \ge 0$.  A Cartier module $(M, \phi)$ is called \emph{$F$-pure} if $\phi$ is surjective.
\end{definition}

Note that given a Cartier module $(M, \phi : F^e_* (M \tensor \sL) \to M)$ we can form a map
\begin{equation*}
\phi^2 : F^{2e}_* (M \tensor \sL^{1+p^e}) \xrightarrow{F^e_* (\phi \tensor \sL)} F^e_* (M \tensor \sL) \xrightarrow{\phi} M
\end{equation*}
and more generally maps
\begin{equation}
\label{eq.CompositionOfMaps}
\phi^n : F^{ne}_* (M \tensor \sL^{1+p^e + \ldots + p^{(n-1)e}}) \rightarrow \dots \rightarrow F^e_* (M \tensor \sL) \xrightarrow{\phi} M.
\end{equation}
In this way, we can construct Cartier modules $(M, \phi^n)$ for each integer $n> 0$.

\begin{definition}
\label{def.SigmaCartier}
Given a Cartier module $(M, \phi)$, we use the notation $\sigma(M, \phi)$ to denote the image \mbox{$\phi^n\big(F^{ne}_* (M \tensor \sL^{1+p^e + \ldots + p^{(n-1)e}})\big) \subseteq M$} for $n \gg 0$.  This image stabilizes by \cite{HartshorneSpeiserLocalCohomologyInCharacteristicP,Gabber.tStruc} so it is well defined.  It is the unique largest $F$-pure Cartier submodule of $(M, \phi)$.
\end{definition}

\begin{remark}
The test submodule $\tau(\omega_X, f^{a\over p^e - 1})$ is an $F$-pure Cartier module under the map $F^e_* \omega_X \xrightarrow{\times f^a} F^e_* \omega_X \xrightarrow{\Tr} \omega_X$.  In fact, it is the unique smallest nonzero Cartier submodule of $\omega_X$ with respect to this map.
\end{remark}

We now recall some other properties of test ideals and modules.

\begin{lemma}
\label{lem.PropertiesOfTest}
With notation as above:
\begin{itemize}
\item[(i)] $\tau(X, \Delta, \ba^t) = \tau(\omega_X, K_X + \Delta, \ba^t)$.
\item[(ii)]  If $D$ is a Cartier divisor then
\[
\tau(X, \Delta +D, \ba^t) = \tau(X, \Delta, \ba^t) \otimes \cO_X(-D).
\]
\item[(iii)] $($\cite{SchwedeTuckerTestIdealFiniteMaps}$)$ Suppose that $f : Y \to X$ is a finite surjective separable morphism with $Y$ normal.
Let $\Ram_{f}$ denote the ramification divisor of $f$ and $\Tr_{f} : f_* K(Y) \to K(X)$ the trace map. Then
\[
\begin{array}{rl}
 \tau(X, \Delta, \ba^t)\hspace*{-0.7em}&=\Tr_{f}\big(f_* \tau(Y, \pi^* \Delta - \Ram_f, (\ba \cO_Y)^t)\big),\\
 \tau(\omega_X, \Delta, \ba^t)\hspace*{-0.7em}&=\Tr_{f}\big(f_* \tau(\omega_Y, \pi^* \Delta, (\ba \O_Y)^t)\big).
\end{array}
\]
Furthermore if $\Tr_{f}(f_* \O_Y) = \O_X$ $($for example, if the degree $[K(Y) : K(X)])$ is not divisible by $p)$, then
\[
\hspace*{2em}
\begin{array}{rl}
 \tau(X, \Delta, \ba^t) \hspace*{-0.7em}&= \big(f_* \tau(Y, \pi^* \Delta - \Ram_f, (\ba \O_Y)^t)\big) \cap K(X),\\
 \tau(\omega_X, \Delta, \ba^t) \hspace*{-0.7em}&= \big(f_* \tau(\omega_Y, \pi^* \Delta, (\ba \O_Y)^t)\big) \cap K(X).
\end{array}
\]
\end{itemize}
\end{lemma}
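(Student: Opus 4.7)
The plan is to handle the three items separately; (i) and (ii) follow from direct manipulations with the definition, while (iii) is essentially the main theorem of \cite{SchwedeTuckerTestIdealFiniteMaps} and I would cite it, indicating only the strategy.

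For (i), the plan is to invoke Grothendieck duality for the Frobenius: the trace $T^e\colon F^e_*\omega_X \to \omega_X$ induces a natural isomorphism
\[
\sHom_{\O_X}(F^e_*\O_X(\lceil (p^e-1)\Delta\rceil),\O_X) \;\cong\; F^e_*\O_X(\lceil (1-p^e)(K_X+\Delta)\rceil),
\]
sending a Cartier-linear $\phi$ to the unique local section $s$ with $\phi(\usc)=T^e(s\cdot\usc)$. Under this bijection, the stability condition $\phi(F^e_*(\ba^{\lceil t(p^e-1)\rceil}J))\subseteq J$ defining $\tau(X,\Delta,\ba^t)$ translates exactly into the condition $T^e(F^e_*(\ba^{\lceil t(p^e-1)\rceil}\cdot\O_X((1-p^e)(K_X+\Delta))\cdot J))\subseteq J$ defining $\tau(\omega_X, K_X+\Delta,\ba^t)$, so the smallest nonzero stable (sub)modules coincide. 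One passes freely to $e$ a multiple of the index $e_0$ so that $(1-p^e)(K_X+\Delta)$ is Cartier, which is harmless since the stability condition for all $e$ is equivalent to stability for one fixed (and hence all) multiples of $e_0$.

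For (ii), since $D$ is Cartier, tensoring with $\O_X((p^e-1)D)$ gives a canonical identification
\[
\sHom_{\O_X}(F^e_*\O_X(\lceil(p^e-1)(\Delta+D)\rceil),\O_X) \;\cong\; \sHom_{\O_X}(F^e_*\O_X(\lceil(p^e-1)\Delta\rceil),\O_X(-D)),
\]
i.e., a map $\phi'$ for the pair $(X,\Delta+D)$ corresponds, after untwisting by $(p^e-1)D$, to a map valued in $\O_X(-D)$. A short calculation with the defining condition shows that an ideal $J\subseteq \O_X$ is stable under all maps defining $\tau(X,\Delta,\ba^t)$ if and only if $J\otimes\O_X(-D)$ is stable under all maps defining $\tau(X,\Delta+D,\ba^t)$. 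Taking smallest nonzero such (sub)modules yields the stated twist.

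Part (iii) I would cite to \cite{SchwedeTuckerTestIdealFiniteMaps}. The strategy there is that via adjunction $K_Y = f^*K_X + \Ram_f$ together with the Grothendieck trace $\Tr_f \colon f_*\omega_Y \cong \sHom_{\O_X}(f_*\O_Y,\omega_X) \to \omega_X$, every Cartier-linear map defining test ideals on $X$ lifts to one on $Y$ and conversely every Cartier-linear map on $Y$ pushes down via $\Tr_f$ to such a map on $X$; moreover this correspondence is compatible with $\ba$-twists since $\ba\O_Y$ pulls back $\ba$. Applying the correspondence to the smallest stable nonzero (sub)module yields the trace formula. When $\Tr_f(f_*\O_Y) = \O_X$, the pushforward of the smallest stable submodule on $Y$ is identified with its intersection with $K(X)$ inside $f_*K(Y)$, giving the second form. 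The main technical subtlety in (i) and (iii) is bookkeeping the rounding of $\Q$-divisors and ensuring the ceilings $\lceil(p^e-1)\Delta\rceil$ are compatible with pullback along $f$ and with the shift by $\Ram_f$; this is handled by restricting to $e$ a multiple of a common index-clearing $e_0$, at which point $(p^e-1)\Delta$ and $(p^e-1)(f^*\Delta - \Ram_f)$ are genuine Cartier divisors.
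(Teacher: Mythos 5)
The paper does not actually supply a proof of this lemma: items (i) and (ii) are presented as standard facts, and item (iii) is attributed to \cite{SchwedeTuckerTestIdealFiniteMaps} without further argument. So there is no internal proof to compare against; you are sketching the standard arguments from the literature, and the overall plan is correct.

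Two small technical slips are worth flagging. In (i), the duality isomorphism should read
\[
\sHom_{\O_X}\bigl(F^e_*\O_X(\lceil (p^e-1)\Delta\rceil),\O_X\bigr)\;\cong\;F^e_*\O_X\bigl(\lfloor (1-p^e)(K_X+\Delta)\rfloor\bigr),
\]
with a floor rather than a ceiling on the right, since $-\lceil (p^e-1)\Delta\rceil = \lfloor (1-p^e)\Delta\rfloor$ and $(1-p^e)K_X$ is integral. Your own remark about restricting to $e$ a multiple of the index $e_0$ makes the distinction disappear, so this does not affect the argument, but the displayed formula is off. In (ii), the asserted intermediate isomorphism
\[
\sHom_{\O_X}\bigl(F^e_*\O_X(\lceil(p^e-1)(\Delta+D)\rceil),\O_X\bigr)\;\cong\;\sHom_{\O_X}\bigl(F^e_*\O_X(\lceil(p^e-1)\Delta\rceil),\O_X(-D)\bigr)
\]
is not quite right as written; $F^e_*$ does not commute with $\otimes\,\O_X(-D)$, and the correct bookkeeping (writing $(p^e-1)D=p^eD-D$ and using the projection formula) identifies the left-hand side with $\sHom_{\O_X}\bigl(F^e_*\O_X(\lceil(p^e-1)\Delta\rceil),\O_X\bigr)$ itself, via $\phi'(\usc)=\phi(d^{p^e-1}\cdot\usc)$ in local terms where $D=\Div(d)$. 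The ``short calculation'' you allude to — namely that $\phi'(F^e_*(\ba^{\lceil t(p^e-1)\rceil}\cdot dJ))=d\cdot\phi(F^e_*(\ba^{\lceil t(p^e-1)\rceil} J))$ because $d^{p^e}$ is a $p^e$-th power — is exactly the right thing and does prove (ii); only the intermediate $\sHom$ formula is misstated. For (iii) the citation and strategy description are accurate. So: correct in essence, with two fixable display formulas.
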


\begin{remark}
By using Lemma \autoref{lem.PropertiesOfTest} (ii), we can define $\tau(X, \Delta, \ba^t)$ for non-effective $\Delta$ as follows.
Write $\Delta = E - D$ where $E$ is an effective $\Q$-divisor and $D$ is an effective Cartier divisor (locally if necessary).  Then
$\tau(X, \Delta, \ba^t) = \tau(X, E, \ba^t) \tensor \O_X(D) \subseteq K(X)$.  This is easily seen to be independent of the decomposition $\Delta = E - D$.
\end{remark}

We now recall how test ideals and modules behave under reduction to characteristic zero.  We refer to the sources \cite[Section 2.2]{MustataSrinivasOrdinary} and \cite{HochsterHunekeTightClosureInEqualCharactersticZero} for a detailed description of the process of reduction to characteristic zero.

Let $X$ be a scheme of finite type over a field $k$ of characteristic zero and $Z \subsetneq X$ be a closed subscheme.
Choosing a suitable finitely generated $\Z$-subalgebra $A$ of $k$, we can construct a scheme $X_A$ of finite type over $A$ and a closed subscheme $Z_{A} \subsetneq X_A$ such that
\[(Z_{A} \hookrightarrow X_A) \otimes_A k \cong Z \hookrightarrow X.\]
We can enlarge $A$ by localizing at a single nonzero element and replacing $X_A$ and $Z_{A}$ with the corresponding open subschemes.
Thus, applying the generic freeness \cite[(2.1.4)]{HochsterHunekeTightClosureInEqualCharactersticZero}, we may assume that $X_A$ and $Z_{A}$ are flat over $\Spec A$.
We refer to $(X_A, Z_A)$ as a \textit{model} of $(X, Z)$ over $A$.
If $Z$ is defined by an ideal sheaf $\ba$, then we denote by $\ba_A$ the defining ideal sheaf of $Z_A$.
If $Z$ is a prime divisor on $X$, then possibly enlarging $A$, we may assume that $Z_A$ is a prime divisor on $X_A$.
When $\Delta=\sum_i d_i D_i$ is a $\Q$-divisor on $X$, let $\Delta_A:=\sum_i d_i D_{i, A}$.

Given a closed point $s \in \Spec A$, we denote by $X_{s}$ (resp.~$Z_{s}$, $\ba_s$, $D_{i, s}$) the fiber of $X_A$ (resp.~$Z_{A}$, $\ba_A$, $D_{i,A}$) over $s$ and denote $\Delta_{s}:=\sum_i t_i D_{i, s}$.
Then $X_{s}$ is a scheme of finite type over the residue field $\kappa(s)$ of $s$, which is a finite field of characteristic $p(s)$.
After enlarging $A$ if necessarily, the $D_{i,s}$ are prime divisors and $\Delta_s$ is a $\Q$-divisor on $X_s$ for all closed points $s \in \Spec A$.

\begin{proposition}[\cite{HaraYoshidaGeneralizationOfTightClosure, SchwedeTakagiRationalPairs,TakagiInterpretationOfMultiplierIdeals}]
\label{prop.TestIdealsCoincidesWithMultiplierIdeals}
Suppose that $(X, \Delta, \ba^t)$ is a triple defined over a field $k$ of characteristic zero.  Suppose $A \subseteq k$ is a finitely generated $\bZ$-subalgebra over which $(X, \Delta, \ba^t)$ can be spread out to $(X_A, \Delta_A, \ba_A^t)$.
Then there exists a nonempty open subset $S \subseteq \Spec A$ $($over which for example a log resolution of $(X_A, \Delta_A, \ba_A^t)$ is defined$)$ such that for every closed point $s \in S$, if $K_X+\Delta$ is $\bQ$-Cartier, then
\[
\mJ(X, \Delta, \ba^t)_s = \tau(X_s, \Delta_s, \ba_s^t),
\]
and if $\Delta$ is $\bQ$-Cartier, then
\[
\mJ(\omega_{X}, \Delta, \ba^t)_s = \tau(\omega_{X_s}, \Delta_s, \ba_s^t).
\]
\end{proposition}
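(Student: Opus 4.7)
The plan is to choose a log resolution in characteristic zero, spread it out over $\Spec A$, and then use Kawamata--Viehweg vanishing for $p\gg 0$ (obtained by semicontinuity from characteristic zero) together with the characterization of the test module as the smallest nonzero $F$-pure Cartier submodule of $\omega_{X_s}$.

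First I would reduce to the module statement. By \autoref{lem.PropertiesOfMultiplier}(i) and \autoref{lem.PropertiesOfTest}(i), substituting $K_X+\Delta$ for $\Delta$ shows that the ideal identification is equivalent to the module identification $\mJ(\omega_X,\Delta,\ba^t)_s=\tau(\omega_{X_s},\Delta_s,\ba_s^t)$, so it suffices to prove the latter. Fix a log resolution $\pi\: Y\to X$ of $(X,\Delta,\ba)$ with $\ba\O_Y=\O_Y(-G)$, and set $M:=\O_Y(\lceil K_Y-\pi^*\Delta-tG\rceil)$. Spread $(\pi,Y,\Delta,G,M)$ out over a finitely generated $\bZ$-subalgebra $A\subseteq k$; by generic flatness, generic freeness, and openness of the SNC locus, after shrinking $\Spec A$ we may arrange that $\pi_A\: Y_A\to X_A$ is proper with $Y_A$ smooth over $A$, that $\pi_A^{-1}(\supp\Delta_A)\cup\supp G_A\cup\exc(\pi_A)$ is a relative SNC divisor, and that the formation of $M_A$ commutes with base change to each closed fiber.

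In characteristic zero, Kawamata--Viehweg vanishing gives $R^i\pi_*M=0$ for all $i>0$. Cohomology-and-base-change, together with semicontinuity, then produces a dense open $S\subseteq\Spec A$ on which $R^i(\pi_s)_* M_s=0$ for $i>0$ and $((\pi_A)_*M_A)\otimes_A\kappa(s)\cong(\pi_s)_*M_s$ for every closed point $s\in S$. Consequently $\mJ(\omega_X,\Delta,\ba^t)_s=(\pi_s)_*M_s$ for all such $s$, reducing the problem to identifying this pushforward with $\tau(\omega_{X_s},\Delta_s,\ba_s^t)$ inside $\omega_{X_s}$.

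For the identification in positive characteristic, I would equip $M_s$ with its canonical Cartier structure: composing Grothendieck's trace $F^e_*\omega_{Y_s}\to\omega_{Y_s}$ with multiplication by an appropriate section supported on the SNC boundary yields, for each $e\geq 1$, a map
\[
\phi_e\: F^e_*\bigl(M_s\otimes\O_{Y_s}(\lceil(p^e-1)(\pi_s^*\Delta_s+tG_s)\rceil)\bigr)\to M_s,
\]
which is surjective because the relevant round-up has SNC support (the standard Fedder-type Frobenius splitting argument for log pairs). Since Frobenius is affine, $R^i(\pi_s)_*F^{e}_*M_s=F^e_* R^i(\pi_s)_*M_s=0$ for $i>0$, so applying $(\pi_s)_*$ preserves surjectivity and endows $(\pi_s)_*M_s=\mJ(\omega_X,\Delta,\ba^t)_s$ with an $F$-pure Cartier submodule structure compatible with the data $(\Delta_s,\ba_s^t)$. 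Since $\tau(\omega_{X_s},\Delta_s,\ba_s^t)$ is the smallest nonzero such submodule, $\tau\subseteq\mJ_s$ follows. Conversely, $F$-purity forces $\sigma(\mJ_s)=\mJ_s$, while $\sigma$ of any compatible Cartier submodule lies in $\tau$ (\autoref{def.SigmaCartier}), giving the reverse inclusion.

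The main obstacle is the $F$-purity of this Cartier structure on the multiplier module: Kawamata--Viehweg vanishing genuinely fails in positive characteristic, so one cannot argue intrinsically on $X_s$ but must instead spread out the specific characteristic-zero vanishing to a dense open $S\subseteq\Spec A$. This is precisely why the conclusion holds only for closed points in some dense open subset rather than uniformly in $p$. Handling non-integral $\Delta$ and fractional exponents $t$ requires care with round-ups but is routine once the integral case is understood.
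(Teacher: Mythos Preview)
The paper does not prove this proposition; it is stated with citations to Hara--Yoshida, Schwede--Takagi, and Takagi and then used as a black box. Your overall strategy---spread out a log resolution, transport Kawamata--Viehweg vanishing to an open set of closed fibers, and analyze Cartier structures---is indeed the framework of those references, but your execution of the hard inclusion $\mJ_s \subseteq \tau$ contains a genuine error.

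You write that ``$\sigma$ of any compatible Cartier submodule lies in $\tau$,'' citing \autoref{def.SigmaCartier}. This is false: by that definition $\sigma(M,\phi)$ is the \emph{largest} $F$-pure Cartier submodule of $(M,\phi)$, whereas $\tau$ is by construction the \emph{smallest} nonzero one. One always has $\tau \subseteq \sigma$, never the reverse. Establishing that $\mJ_s$ is $F$-pure therefore only reproduces the containment $\tau \subseteq \mJ_s$ you already obtained; it gives no information in the other direction. In the cited papers the inclusion $\mJ_s \subseteq \tau$ is obtained by a different mechanism: one proves (via Deligne--Illusie lifting, so for $p$ larger than the dimension, hence on an open set of $\Spec A$) that the pushforward of a suitable $e$-iterated trace map on $Y_s$ surjects onto $\mJ_s$, and then checks that its source already lies inside $\tau$ by the explicit description of $\tau$ as a sum of trace images of $\omega_{X_s}$ twisted by a fixed test element. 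The $F$-purity of $\mJ_s$ is a consequence of this surjectivity, not a substitute for it.

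There is also a smaller gap in the $F$-purity step itself. The source of your map $\phi_e$ is $F^e_*(M_s \otimes \sL_e)$ for a nontrivial twist $\sL_e$ depending on $e$, not $F^e_* M_s$; so the vanishing $R^i(\pi_s)_* F^e_* M_s = F^e_* R^i(\pi_s)_* M_s = 0$ that you invoke concerns the wrong sheaf. And even if the higher direct images of the source vanished, this would not by itself force surjectivity of $(\pi_s)_*\phi_e$: one needs $R^1(\pi_s)_*$ of the \emph{kernel} to vanish, which is precisely the positive-characteristic vanishing statement that requires real work (and is what restricts the conclusion to an open set of primes).
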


We define $F$-injective and Du Bois singularities.  For a survey of Du Bois singularities with more detailed explanations, see \cite{KovacsSchwedeDuBoisSurvey}.

\begin{definition}[$F$-injective singularities, \cite{FedderFPureRat}]
\label{def.DuBoisSingularities}
Let $X$ be a scheme of characteristic $p > 0$.  Then we say that $X$ has \emph{$F$-injective singularities} if for each $z \in X$, the Frobenius map $H^i_{z}(\O_{X,z}) \to H^i_{z}(F_* \O_{X,z})$ is injective for all $i \in \bZ$.  By local duality, in the case that $X$ is $F$-finite, this is equivalent to requiring that $\myH^{-i} F_* \omega_X^{\mydot} \to \myH^{-i} \omega_X^{\mydot}$ surjects for all $i \in \bZ$.  Again in the case that $X$ is $F$-finite, by the above observation, it is sufficient to check only $z$ closed points.

Suppose that $X$ is a scheme of finite type over a field $k$ of characteristic zero. Then we say that $X$ has \textit{dense $F$-injective type} if there exist a model of $X$ over a finitely generated $\Z$-subalgebra $A$ of $k$ and a Zariski-dense set of closed points $S \subseteq \Spec A$ such that $X_s$ has $F$-injective singularities for all $s \in S$.
\end{definition}

\begin{definition}[Du~Bois singularities, \cite{DuBoisMain,SteenBrinkDuBoisReview}]
Let $X$ be a reduced scheme of finite type over a field of characteristic zero.
Following the notation of \cite{DuBoisMain}, we say that $X$ has \emph{Du Bois singularities} if the canonical map $\O_X \to \DuBois{X}$ is a quasi-isomorphism.  Recall that $\DuBois{X} \cong \myR (\pi_{\mydot})_* \O_{X_{\mydot}}$ where $\pi_{\mydot} : X_{\mydot} \to X$ is a proper hypercover with smooth terms.
Alternately, suppose $X \subseteq T$ is an embedding of $X$ into a smooth variety $T$, $\pi : \tld T \to T$ is a log resolution of $(T, X)$ and $E = (\pi^{-1} X)_{\red}$.  In this case $\DuBois{X} \cong \myR \pi_* \O_E$ and it follows that $X$ has Du Bois singularities if and only if $\O_X \to \myR \pi_* \O_E$ is a quasi-isomorphism, \cf \cite{EsnaultHodgeTypeOfSubvarieties,SchwedeEasyCharacterization}.
\end{definition}

We also need the following lemma on affine cones over varieties with Du~Bois singularities.  It is well known to experts, but we do not know a suitable reference.  A related result which uses essentially the same computation is found in \cite[Theorem 4.4]{MaFInjectivityAndBuchsbaumSingularities}.

\begin{lemma}
\label{lem.HighAffineConeOverDuBoisIsDuBois}
Suppose that $X$ is a projective variety over a field of characteristic zero such that $X$ has Du Bois singularities.  Let $L$ be an ample divisor.  Then for all $m \gg 0$, the affine cone
\[
Y = \Spec \, R(X, mL) = \Spec \,\bigoplus_{j \in \bZ} H^0(X, \O_X(mjL))
\]
has Du Bois singularities.  In particular, every smooth projective variety has an affine cone with Du Bois singularities for some embedding into projective space.
\end{lemma}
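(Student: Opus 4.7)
The plan is to resolve the vertex of the cone by the total space of the tautological line bundle, and then to invoke cdh descent for the Du~Bois complex.  Write $v \in Y$ for the vertex, and for $m$ large enough that $mL$ is very ample and the induced embedding $X \hookrightarrow \mathbb{P}\bigl(H^0(X, mL)^{\vee}\bigr)$ is projectively normal, let
\[
\pi \colon \widetilde{Y} := \Spec_X\Bigl(\bigoplus_{j \ge 0} \O_X(mjL)\Bigr) \longrightarrow Y = \Spec R(X, mL)
\]
be the canonical morphism coming from affinization.  Then $\pi$ is proper, is an isomorphism away from $v$, and collapses the zero section $E \cong X$ of the line bundle $\O_X(-mL)$ onto $v$; it is in fact the blow-up of $Y$ at the vertex.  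Since $\widetilde Y \to X$ is a line bundle (so smooth) and $X$ is Du~Bois, $\widetilde Y$ is Du~Bois as well; and obviously $E \cong X$ and the point $v$ are Du~Bois.

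The (proper cdh) distinguished square formed by $\pi$ and $\{v\} \hookrightarrow Y$ yields the Mayer--Vietoris / blow-up triangle for the Du~Bois complex
\[
\DuBois{Y} \longrightarrow \myR\pi_*\O_{\widetilde Y} \oplus \O_v \longrightarrow \myR\pi_*\O_E \xrightarrow{\;+1\;},
\]
in which I have already used the previous paragraph to replace each $\DuBois{\ast}$ on the right by $\O_{\ast}$.  Such a triangle exists because $\DuBois{\cdot}$ is computed by smooth proper hypercovers, and compatible smooth proper hypercovers of $\widetilde Y, E, Y, v$ can be arranged.  Outside $v$ the morphism $\pi$ is an isomorphism and $Y$ is Zariski-locally a product of an open of $X$ with $\bG_m$, hence already Du~Bois; so it remains to verify that $\O_Y \to \DuBois{Y}$ is a quasi-isomorphism on stalks at the vertex.

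For this I read off the long exact sequence from the triangle, after a short Serre-vanishing computation.  One has
\[
(R^i\pi_*\O_{\widetilde Y})_v = \bigoplus_{j \ge 0} H^i(X, \O_X(mjL)) \quad\text{and}\quad (R^i\pi_*\O_E)_v = H^i(X, \O_X),
\]
and the map between them, induced by restriction to the zero section, is the projection onto the $j = 0$ summand.  Enlarging $m$ so that $H^i(X, \O_X(mjL)) = 0$ for all $i > 0$ and $j \ge 1$, the comparison map is the identity in degrees $i > 0$; in degree zero the relevant map $R(X, mL) \oplus k \to k$ is surjective, with kernel identified with $R(X, mL) = \O_{Y, v}$.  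The long exact sequence therefore produces $\sH^0(\DuBois{Y})_v = \O_{Y, v}$ and $\sH^i(\DuBois{Y})_v = 0$ for $i > 0$, so $Y$ is Du~Bois.  The principal non-routine step is justifying the Mayer--Vietoris triangle above in the precise cdh form needed; once that is in hand the rest is a transparent computation driven purely by Serre vanishing.
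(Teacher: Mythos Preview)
Your argument is correct and is essentially identical to the paper's: both blow up the cone point, use that the total space of the line bundle is Du~Bois, invoke the Mayer--Vietoris triangle for $\DuBois{\,\cdot\,}$, and then reduce to showing $R^i\pi_*\O_{\widetilde Y}\to R^i\pi_*\O_E$ is an isomorphism for $i>0$, which follows from Serre vanishing after enlarging $m$.  The only cosmetic difference is that the paper phrases the last step in terms of the grading on $R^i\pi_*\O_{\widetilde Y}$ (it is nonzero in only finitely many degrees, so passing to a high Veronese concentrates it in degree~$0$), whereas you state Serre vanishing directly; also note that strictly speaking $\O_{Y,v}$ is the localization of $R(X,mL)$ at the irrelevant ideal, not $R(X,mL)$ itself, but this is immaterial since the higher $R^i\pi_*$ are supported at $v$.
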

\begin{proof}
Without loss of generality we may assume that $L$ is very ample and that $R(X, L)$ is generated in degree one.  We begin by considering what happens when $m = 1$.
Let $\pi : \tld Y \to Y$ be the blowup of the cone point with exceptional divisor $E \cong X$.  Since $\tld Y$ is an $\mathbb{A}^1$-bundle over $X$, we see that $\tld Y$ has Du Bois singularities (cf. \cite[Theorem 3.9]{DohertySingularitiesOfGenericProjection}).  We have an exact triangle:
\[
\DuBois{Y} \to \myR\pi_* \DuBois{\tld Y} \oplus k \to \myR \pi_* \DuBois{E} \xrightarrow{+1},
\]
which is just
\[
\DuBois{Y} \to \myR \pi_* \O_{\tld Y} \oplus k \to \myR \pi_* \O_E \xrightarrow{+1}.
\]
Since we want to show that $\O_Y \cong \DuBois{Y}$, it is easy to see that it suffices  to show that $\alpha : R^i \pi_* \O_{\tld Y} \to R^i \pi_* \O_{E}$ is an isomorphism for all $i > 0$, \cf \cite{SteenbrinkMixed}.
We consider the map $\alpha$ as a map of graded $R(X, L)$-modules.  Note that $[R^i \pi_* \O_{\tld Y}]_0$ is simply $H^i(X, \O_X) \cong H^i(E, \O_E)$ and that $\alpha$ is an isomorphism in degree zero.  However, since $\pi$ is an isomorphism away from the origin, $R^i \pi_* \O_{\tld Y}$ is nonzero in only finitely many degrees.  Since the formation of $R^i \pi_* \O_{\tld Y}$ is easily seen to be compatible with taking Veronese subrings of $R(X, L)$, we can replace $R(X, L)$ by $R(X, mL)$ for $m \gg 0$ and conclude that $R^i \pi_* \O_{\tld Y}$ lives only in degree zero.  Then $\alpha$ is an isomorphism and this completes the proof.
\end{proof}

We conclude by stating a key theorem from \cite{MustataSrinivasOrdinary} which is the basis for our result on test ideals.

\begin{theorem}\textnormal{(\cite[Theorem 5.10]{MustataSrinivasOrdinary})}
\label{thm.MustataSrinivasSurjectivityRestated}
Suppose that Conjecture \autoref{MS conj} holds. Let $\pi : X \to T$ be a projective morphism of schemes over $k$ with $X$ nonsingular, and let $E$ be a reduced
simple normal crossings divisor on $X$.
If $\pi_A: X_A \to T_A$ and $E_A$ are models over a finitely generated $\Z$-subalgebra $A \subseteq k$ for $\pi$ and $E$, respectively, then there exists a Zariski-dense set of closed points $S \subset \Spec A$ such that for
every $e \geq 1$ and every $s \in S$, the induced morphism
\[
{\pi_s}_* F^e_*\omega_{X_s}(E_s) \to {\pi_s}_*\omega_{X_s}(E_s)
\]
is surjective.
\end{theorem}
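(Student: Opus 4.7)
The proof follows the strategy of \mustata{}-Srinivas in \cite{MustataSrinivasOrdinary}: translate the surjectivity of the relative trace map into injectivity of Frobenius on a dual cohomology group via duality, and then reduce to \autoref{MS conj} applied to an auxiliary smooth projective variety obtained from a cyclic cover.

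The plan is to first reduce to the absolute setting. Since $\pi$ is projective, checking surjectivity of $\pi_{s*}F^e_*\omega_{X_s}(E_s) \to \pi_{s*}\omega_{X_s}(E_s)$ as a map of coherent sheaves on $T_s$ can be reduced (via base change, cohomology and base change, and localization on $T_s$) to showing surjectivity of the global trace map on geometric fibers of $\pi_s$. Standard reduction mod $p$ arguments then reduce the problem to a single smooth projective $n$-dimensional variety $V$ over a perfect field of positive characteristic, equipped with a reduced SNC divisor $E_V$; the goal becomes surjectivity of
\[
H^0(V, F^e_*\omega_V(E_V)) \to H^0(V, \omega_V(E_V)).
\]

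By Serre duality on $V$, this surjectivity is equivalent to injectivity of the Frobenius-induced map $H^n(V, \O_V(-E_V)) \to H^n(V, F^e_*\O_V(-E_V))$. The key step is then to construct a Kummer-type cyclic cover $f : Y \to V$ of degree $N$ coprime to the residue characteristic, branched along $E_V$, with $Y$ smooth projective, such that $\omega_V(E_V)$ appears as a direct summand of $f_*\omega_Y$ (equivalently, after duality, $\O_V(-E_V)$ is a summand of an appropriate twist of $f_*\O_Y$). The SNC hypothesis on $E$ is precisely what ensures $Y$ can be chosen smooth (possibly after an application of Kawamata's covering trick to provide the necessary $N$-th roots of line bundles); the degree $N$ can be fixed in characteristic zero so that it remains coprime to $p(s)$ for $s$ in a dense subset of $\Spec A$.

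Applying \autoref{MS conj} to $Y$ (whose model over $A$ is obtained by spreading out the cover construction), we obtain that for $s$ in a dense set of closed points Frobenius is bijective on $H^n(Y_s, \O_{Y_s})$. Since the cyclic cover is defined over the base, the direct summand decomposition is Frobenius-equivariant, and Frobenius injectivity on the summand $H^n(V, \O_V(-E_V))$ follows. Bijectivity of Frobenius on $H^n(Y_s, \O_{Y_s})$ propagates to all its iterates, yielding the desired surjectivity for all $e \geq 1$. The main technical obstacle is the cyclic cover construction: producing $Y$ smooth projective with the required direct summand decomposition of $f_*\omega_Y$ and ensuring the whole construction spreads out uniformly over $\Spec A$ so that MS conj can be invoked. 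The SNC hypothesis on $E$ is essential here, since without it $Y$ generally fails to be smooth and the direct summand argument breaks down.
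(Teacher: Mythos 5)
Your duality framing matches the \mustata{}--Srinivas strategy that this statement rests on (the paper does not reprove it: it cites \cite[Theorem 5.10]{MustataSrinivasOrdinary}, and reproduces the method when proving the generalization \autoref{theorem.SurjectsAllCohomologySNC}): reduce to the projective case and dualize the trace map to the Frobenius action on $H^i(X_s,\O_{X_s}(-E_s))$. But your key step, the Kummer/cyclic cover, has a genuine gap. For the degree-$N$ cyclic cover $f\colon Y\to V$ determined by $L^N\cong \O_V(E_V)$ one has $f_*\O_Y=\bigoplus_{i=0}^{N-1}L^{-i}$ and hence $f_*\omega_Y=\bigoplus_{i=0}^{N-1}\omega_V\otimes L^{i}$: the log sheaf $\omega_V(E_V)=\omega_V\otimes L^{N}$ is \emph{not} among the summands. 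It appears only as a summand of $f_*\big(\omega_Y(\widetilde{E})\big)$, where $\widetilde{E}$ is the reduced ramification divisor, so the cover merely reproduces the same log-type statement upstairs and the argument is circular; a Kawamata cover repairs smoothness over an SNC branch locus but has the same defect. Moreover, the eigensheaf decomposition is not Frobenius-equivariant: $F^*$ is $p$-linear and commutes with the $\mu_N$-action, so it sends the $\chi$-eigenspace of $H^n(Y_s,\O_{Y_s})$ to the $\chi^p$-eigenspace, and the decomposition is preserved only when $p(s)\equiv 1 \pmod{N}$. You cannot impose that congruence: \autoref{MS conj} supplies \emph{some} Zariski-dense set $S$ of closed points, which you may only shrink by open dense subsets of $\Spec A$, and the locus $p(s)\equiv 1\pmod{N}$ is not open, so $S$ could miss it entirely.

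What actually happens at this point in \cite[Theorem 5.10]{MustataSrinivasOrdinary} (mirrored in the paper's proof of \autoref{theorem.SurjectsAllCohomologySNC}) is a stratification argument with no covers: apply \autoref{MS conj}, upgraded to all cohomological degrees via ample hypersurface sections as in \autoref{lemma.every j}, to $X$ and to every intersection of components of $E$; deduce Frobenius bijectivity on $H^j(E_s,\O_{E_s})$ from the Mayer--Vietoris-type acyclic complex of the strata (\autoref{lemma.MS lemma5.6}); then get bijectivity on $H^i(X_s,\O_{X_s}(-E_s))$ from $0\to\O_{X_s}(-E_s)\to\O_{X_s}\to\O_{E_s}\to 0$ and the five lemma (\autoref{lem.SemiSimpleFrobMinusSNC}), and dualize. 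Note also that your reduction of the relative statement to geometric fibers of $\pi_s$ is not innocuous (fibers of $\pi$ need not be smooth, nor $E$ restricted to them SNC); the actual reduction twists by a sufficiently ample $\sL$ on $T$, adds a general Bertini divisor $\pi^*D'$ so that $E+\pi^*D'$ stays SNC, and concludes with vanishing plus global generation.
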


\section{A result of Gabber on $h$-cohomology}\label{Sec.Gabber}

The main content of this section was explained to us by O.~Gabber, \cite{GabberCommunicationWithBhatt}; we bear full responsibility for any shortcomings or errors in our exposition.

\begin{notation}
	Fix a Noetherian excellent base scheme $S$ that is an $\bF_p$-scheme for a prime number $p$. Let $\Sch$ denote the category of $S$-schemes of finite type. We use the following topologies on $\Sch$: the $h$-topology (\cite[\S 10]{SuslinVoevodsky}, \cite[\S 2]{BeilinsonpadicPeriods}), the $cdh$-topology (\cite[\S 12]{MazzaVoevodskyWeibel}, \cite[\S 3]{CortinasHaesemayerSchlichtingWeibelCyclicHomology}), the Nisnevich topology (\cite[\S 3.1]{MorelVoevodsky},\cite[\S 12]{MazzaVoevodskyWeibel}), and the fppf topology (\cite[Tag 021L]{stacks-project}). Recall that the Nisnevich topology is generated by \'etale covers with the residue field lifting property, while the $cdh$-topology is generated by Nisnevich covers and covers of the form $X' \sqcup Z \to X$, where $Z \to X$ is a closed immersion, and $X' \to X$ is an {\em abstract blowup} centered along $Z$, i.e., a proper map that is an isomorphism outside $Z$. We write $\Shv_h(\Sch)$, $\Shv_{cdh}(\Sch)$, $\Shv_\Nis(\Sch)$ and $\Shv_f(\Sch)$ for the corresponding topoi. There are obvious morphisms
\[ \xymatrix{ & \Shv_h(\Sch) \ar[dd]^{\pi} \ar[ld]_{\nu} \ar[rd]^{\mu} & \\
\Shv_{cdh}(\Sch) \ar[rd]^{\psi} & & \Shv_f(\Sch) \ar[ld]_{\eta} \\
	 	& \Shv_\Nis(\Sch) & }\]
		of topoi. The corresponding pushforward functors, at the level of sheaves as well as at the derived level, are fully faithful (as these are different topologies on the same category). Hence, we can (and will) view a $cdh$-sheaf as a Nisnevich sheaf satisfying the sheaf property for $cdh$-covers, and similarly for the other topologies. Passing to the derived level, we say that an object $K \in D(\Shv_\Nis(\Sch))$ satisfies $cdh$-descent if $K \simeq \mR \psi_* \psi^* K$ via the natural map, i.e., that $\mR \Gamma_\Nis(U,K) \simeq \mR\Gamma_{cdh}(U,\psi^* K)$ for any $U \in \Sch$ or equivalently that $K$ lies in the essential image of the fully faithful functor $D(\Shv_{cdh}(\Sch)) \to D(\Shv_\Nis(\Sch))$; we make similar definitions for $h$-descent and flat descent. Given pro-object $\{ X_i \}$ in $\Sch$ and a sheaf $F \in \Shv_\Nis(\Sch)$, we set $\mR\Gamma_\Nis( \lim_i X_i, F) = \colim \mR \Gamma_\Nis(X_i,F)$, and similarly for the other topologies; equivalently, we define $\mR \Gamma_\Nis(\lim X_i,-)$ as the derived functor of $F \mapsto \colim F(X_i)$. The main case of interest is when $\{X_i\}$ has affine transition maps (so $\lim X_i$ exists as an affine scheme), and the functor $F \mapsto \colim F(X_i)$ defines a point of one of the above topoi (so the higher derived functors vanish). For any ring $R$, we write $R_\perf = \colim R$, where the colimit takes place over the Frobenius maps on $R$; this construction sheafifies to give a presheaf $\calO_\perf$ on $\Sch$ which is an fppf (and hence Nisnevich) sheaf; we write $\calO \in \Shv_\Nis(\Sch)$ for the structure presheaf.
\end{notation}

\begin{remark}
	Our choice of the use of the $cdh$-topology is largely dictated by the proof presented below: one has powerful finiteness theorems for proving descent (essentially due to Voevodsky \cite{VoevodskyCompletelyDecomposed}, but we use \cite[Theorem 3.4]{CortinasHaesemayerSchlichtingWeibelCyclicHomology}), as well as an excellent description of points (which relies, at least philosophically, on Zariski's initial work on Riemann-Zariski spaces, see \cite[\S 3]{GoodwillieLichtenbaumCohomologicalBound}). The Nisnevich topology, on the other hand, can be easily replaced by the Zariski topology in the discussion below without a  serious cost: one must simply check Nisnevich descent for the relevant sheaves whilst proving $cdh$-descent (which is trivial: we only encounter locally quasi-coherent sheaves on $\Sch$).
\end{remark}

The main result we want to explain is:

\begin{theorem}[Gabber]\label{GabberTheorem}
The sheaf $\calO_\perf \in \Shv_\Nis(\Sch)$ is the $h$-sheafification of $\calO$. Moreover, $\calO_\perf$ satisfies $h$-descent. In particular, one has
\begin{align*}
 \mR\Gamma_h(X,\calO_\perf) \simeq \mR\Gamma_{cdh}(X,\calO_\perf) & \simeq \mR\Gamma_\Nis(X,\calO_\perf)\\
 & \simeq \colim \mR\Gamma_\Nis(X,\calO).
\end{align*}
for any $X \in \Sch$.
\end{theorem}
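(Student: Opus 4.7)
The plan is to establish $h$-descent for $\calO_\perf$ first, and then deduce the identification with the $h$-sheafification of $\calO$ as a formal consequence of Frobenius being an $h$-cover. Once both are in place, the cohomological chain drops out: $h$-descent gives the first isomorphism, $cdh$-descent for $\calO_\perf$ gives the second, and the last follows from $\calO_\perf = \colim_F \calO$ together with the compatibility of Nisnevich cohomology with filtered colimits on Noetherian schemes.

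To prove $h$-descent, I would first reduce to $cdh$-descent via the Cortinas--Haesemeyer--Schlichting--Weibel criterion (Theorem 3.4, as cited in the paper): a presheaf of complexes satisfying Nisnevich descent upgrades to $cdh$-descent as soon as it satisfies the Mayer--Vietoris property for abstract blowup squares. Given an abstract blowup $\pi\colon X' \to X$ with center $Z \hookrightarrow X$ and $Z' = X' \times_X Z$, one must verify
\[ \mR\Gamma_\Nis(X, \calO_\perf) \stackrel{\sim}{\longrightarrow} \mR\Gamma_\Nis(X', \calO_\perf) \times_{\mR\Gamma_\Nis(Z', \calO_\perf)} \mR\Gamma_\Nis(Z, \calO_\perf). \]
Writing each term as $\colim_F \mR\Gamma_\Nis(-,\calO)$, this reduces to showing that the cofiber of the (non-exact) blowup square for $\calO$ has vanishing colimit under Frobenius iteration. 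That cofiber is supported on the nowhere-dense center $Z$ and is controlled by $\pi_*\calO_{X'}/\calO_X$ together with the higher direct images $R^i\pi_*\calO_{X'}$. The key input is that Frobenius iteration kills these obstructions in the colimit: the $\pi_*$-piece vanishes after perfection because a finite birational extension of reduced $\bF_p$-algebras induces an isomorphism on perfections, and the higher $R^i\pi_*$ pieces require a parallel but more delicate Frobenius-nilpotence argument for coherent sheaves supported on $Z$.

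To upgrade $cdh$-descent to $h$-descent, one further needs descent along finite surjective maps. Factor a finite surjective $f\colon Y \to X$ into its universal-homeomorphism part (nilpotent thickening, radical quotient, Frobenius twist) and a residual generically-\'etale part: the former induces an isomorphism on $\calO_\perf$, so descent along it is automatic, while the latter is handled by ordinary flat descent for $\calO$ combined with the exactness of filtered colimits. With $h$-descent in hand, the sheafification claim follows formally: Frobenius is an $h$-cover and universal homeomorphisms become isomorphisms in the $h$-topos, so any morphism from $\calO$ to an $h$-sheaf must invert Frobenius and therefore factor uniquely through $\colim_F \calO = \calO_\perf$.

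The hard part is the abstract blowup square property, specifically controlling the higher direct images $R^i\pi_*\calO_{X'}$ under Frobenius iteration. The $\pi_*$-level vanishing after perfection is a clean consequence of the finite-birational-perfection lemma, but extending this to a uniform derived-level statement without invoking resolution of singularities is the genuine technical content, and is where one must honestly unpack Gabber's insight.
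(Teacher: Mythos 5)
Your skeleton has the right ingredients in places (the CHSW reduction to abstract blowup squares, topological invariance of $\calO_\perf$, and the formal deduction of the sheafification statement from $h$-descent plus the fact that Frobenius is an $h$-cover, which is exactly how the paper concludes), but there are two genuine gaps. The first is the one you acknowledge: the abstract blowup square in positive cohomological degrees. Moreover, the mechanism you propose for it is not the right statement. It is false in general that Frobenius iteration kills the higher direct images: for the blowup $X'$ of the vertex of a cone over an ordinary abelian surface, the colimit of $H^1(X',\calO_{X'})$ under Frobenius is nonzero. What must be proved is not vanishing but that the perfected higher cohomology of $X'$ agrees with that of the exceptional fibre $Z'$. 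The paper gets this from the theorem on formal functions rather than any Frobenius-nilpotence of $R^i\pi_*$: there is a constant $c$ with $H^i(X', I_{Z'}^n) = I_Z^{n-c}\cdot H^i(X', I_{Z'}^c)$ for $n \gg 0$, and since $H^i(X', I_{Z'}^c)$ is killed by a power of $I_Z$ for $i>0$, one gets $H^i(X', I_{Z'}^m)=0$ for $m \gg 0$, hence $H^i(X',\calO_{X'}) \simeq H^i(mZ',\calO_{mZ'})$; only then does Frobenius enter, through topological invariance of $\calO_\perf$, which identifies the perfection of the right-hand side with that of $H^i(Z',\calO_{Z'})$. Similarly, the $H^0$-exactness of the blowup sequence is not just invariance of perfection under finite birational extensions: the sequence involves $Z$ and $Z'$, and the paper uses Stein factorization together with Koll\'ar's quotient $Y/R$ of a finite surjection by the reduced equivalence relation (equivalently, weak normality of perfections), as in \autoref{lem.CohomOfOPerfVsExceptional}.

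The second gap is the passage from $cdh$-descent to $h$-descent. Your factorization of a finite surjective map into a universal homeomorphism and a ``generically \'etale part'' does not reduce to flat descent, because a finite surjective generically \'etale morphism need not be flat; and even granting descent along finite surjections, you still have to assemble derived $h$-descent from $cdh$-descent plus finite covers, which is not formal. The paper does this in two steps that your outline omits entirely: (i) the underived sheaf axiom for arbitrary $h$-covers, proved by induction on dimension after refining the cover to a Zariski cover followed by a proper surjective generically finite map and flattening its strict transform by an abstract blowup, so that flat descent and the already-established $cdh$-property can be combined; and (ii) the identification $\mR\Gamma_h(X,\calO_\perf) \simeq \mR\Gamma_{cdh}(X,\calO_\perf)$, proved by showing the $cdh$-sheafification of $X \mapsto H^i_h(X,\calO_\perf)$ vanishes for $i>0$, using the Goodwillie--Lichtenbaum description of points of the $cdh$-topology by henselian valuation rings and the fact that a valuation ring with algebraically closed fraction field is a point object for the $h$-topology. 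Without a substitute for these steps, the chain of isomorphisms in the statement is not established.
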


\begin{remark}
The noetherian and excellence assumptions in Theorem \ref{GabberTheorem} are not really necessary: modulo correct definitions for non-noetherian schemes, the $h$-cohomology of the structure sheaf always coincides with the cohomology of the perfection for arbitrary $\mathbb{F}_p$-schemes. More generally, one can also show a non-abelian analogue of this fact: the category of vector bundles over $\calO_\perf$ satisfies $h$-descent. Both these results are explained in \cite{BhattScholzeWittAffGr}.
\end{remark}

\begin{lemma}
The presheaf $\calO_\perf$ is topologically invariant, i.e., $\calO_\perf(X) \simeq \calO_\perf(Y)$ for any universal homeomorphism $f:Y \to X$.
\end{lemma}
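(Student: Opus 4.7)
The plan is to reduce to the affine case and exploit two standard facts about universal homeomorphisms in characteristic $p$.

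Since the statement is local, I may assume $X = \Spec R$ and $Y = \Spec S$, with $\phi : R \to S$ corresponding to $f$. A morphism of finite type between Noetherian schemes that is universally closed and has finite fibres is finite, so $\phi$ is a finite ring map. The goal becomes showing that the induced map $\phi_\perf : R_\perf \to S_\perf$ is an isomorphism, where $R_\perf$ denotes the colimit of $R$ along iterated Frobenius.

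The two consequences of $f$ being a universal homeomorphism that I will use are: (a) $\ker \phi$ is nilpotent, because $f$ induces an isomorphism $Y_\red \to X_\red$; (b) for every $s \in S$ there exists $n \ge 0$ with $s^{p^n} \in \phi(R)$. Statement (a) is immediate. Statement (b) is the key algebraic input: since $f$ is radicial, the residue field extensions are purely inseparable, and combined with the finiteness of $\phi$ this globalises to give $s^{p^n} \in \phi(R)$. Equivalently, finite universal homeomorphisms over an $\bF_p$-base are exactly those finite ring maps satisfying (a) and (b), a characterisation available in standard references.

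Given (a) and (b), the map $\phi_\perf$ is easily seen to be an isomorphism. For injectivity, an element of $\ker \phi_\perf$ is represented by some $r_0 \in R$ with $\phi(r_0^{p^k}) = \phi(r_0)^{p^k} = 0$ in $S$ for some $k$; then $r_0^{p^k} \in \ker \phi$ is nilpotent by (a), so $r_0$ itself is nilpotent and thus zero in $R_\perf$. For surjectivity, given $s \in S$, use (b) to write $s^{p^n} = \phi(r)$ for some $r \in R$; since $R_\perf$ is perfect, $r$ has a unique $p^n$-th root $r^{1/p^n} \in R_\perf$, and $\phi_\perf(r^{1/p^n})$ is a $p^n$-th root of $s^{p^n}$ in the perfect ring $S_\perf$, so it equals $s$ by uniqueness of $p^n$-th roots. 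The only nontrivial input is (b); everything else is formal manipulation of colimits along Frobenius.
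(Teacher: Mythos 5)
Your argument is correct in substance, and it takes a somewhat different route from the paper. The paper's proof is a one-liner: since $f$ is of finite presentation, it factors through sufficiently high iterates of Frobenius on $X$ and $Y$ (citing \cite[Proposition 35]{KollarQuotientsByFinite} or \cite{YanagiharaWeaklyNormal}), so $f^*$ becomes invertible after passing to $\calO_\perf$, where Frobenius is inverted. You instead reduce to a finite morphism of affines (legitimate: the question is Zariski-local on $X$ since $\calO_\perf$ is a Zariski sheaf, and a finite type universal homeomorphism is finite) and invoke the elementwise characterization of universal homeomorphisms of $\bF_p$-algebras --- kernel consisting of nilpotents, and every element having some $p$-power in the image --- then verify by hand that $R_\perf \to S_\perf$ is bijective. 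This is more elementary and in fact slightly more general: the elementwise statement does not require the uniform two-sided Frobenius factorization, hence makes no essential use of finite presentation. The cost is that your key input (b) is itself a nontrivial citation-level fact; you should cite it precisely (it is the standard characterization of universal homeomorphisms of $\bF_p$-algebras, available in \cite{stacks-project}) rather than gesture at ``radicial residue extensions plus finiteness globalises.'' The colimit manipulations themselves (injectivity from nilpotence of the kernel, surjectivity from uniqueness of $p^n$-th roots in the perfect ring $S_\perf$, and the formal extension from stage-zero elements of $S$ to all of $S_\perf$) are all fine.

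One justification should be corrected: you assert that $\ker\phi$ is nilpotent ``because $f$ induces an isomorphism $Y_{\red} \to X_{\red}$.'' That is false in general --- the Frobenius of a non-perfect scheme, or the normalization of a cuspidal curve, are universal homeomorphisms that are not isomorphisms on reductions. The conclusion you need is nevertheless immediate: since $\Spec S \to \Spec R$ is surjective, any element of $\ker\phi$ lies in every prime of $R$ and hence is nilpotent (it is also part of the characterization you cite for (b)). So this is a wrong reason for a correct and easy claim, not a gap in the argument.
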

\begin{proof}
The map $f$ is a finitely presented, and hence factors through sufficiently high iterates of Frobenii on $X$ and $Y$ (see \cite[Proposition 35]{KollarQuotientsByFinite} or for instance \cite{YanagiharaWeaklyNormal}), so the claim is clear.
\end{proof}

\begin{lemma}
Let $f:Y \to X$ be a proper surjective map with geometrically connected fibres. Then $\calO_\perf(X) \simeq \calO_\perf(Y)$.
\end{lemma}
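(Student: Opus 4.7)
The plan is to factor $f$ through its Stein factorization, say $Y \xrightarrow{g} Z \xrightarrow{h} X$, where $Z$ is the relative spectrum of $f_*\calO_Y$ over $X$, the map $g$ is proper with $g_*\calO_Y = \calO_Z$, and $h$ is finite. The desired identification then splits into two independent assertions: first, $h$ is a universal homeomorphism, so the previous (topological invariance) lemma yields $\calO_\perf(X) \simeq \calO_\perf(Z)$; and second, taking global sections of $g_*\calO_Y = \calO_Z$ produces an isomorphism $\calO(Z) \xrightarrow{\sim} \calO(Y)$, whence $\calO_\perf(Z) \simeq \calO_\perf(Y)$ after colimiting along Frobenius. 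Composing the two isomorphisms proves the lemma.

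The substantive step is the first assertion, which is where the hypothesis on geometrically connected fibers enters. Fix a geometric point $\bar x \to X$. By the fundamental property of Stein factorization (see EGA III.4.3), the geometric connected components of the fiber $Y_{\bar x}$ are precisely the preimages under $g$ of the points in $h^{-1}(\bar x)$, so that $Y_{\bar x} = \coprod_{z \in h^{-1}(\bar x)} g^{-1}(z)$ with each $g^{-1}(z)$ geometrically connected. The hypothesis that $Y_{\bar x}$ is connected therefore forces $h^{-1}(\bar x)$ to be a single point, and combined with the surjectivity of $h$ (inherited from $f$) this shows $h$ is universally bijective. Moreover, the same single-point statement implies that for each $z \in h^{-1}(x)$ the algebra $\kappa(z) \otimes_{\kappa(x)} \overline{\kappa(x)}$ is local, which is equivalent to $\kappa(z)/\kappa(x)$ being purely inseparable. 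Thus $h$ is finite, universally bijective, and radicial, and hence a universal homeomorphism, as needed.

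The only nontrivial ingredient, and therefore the main obstacle I foresee, is the assertion that Stein factorization interacts correctly with passage to geometric fibers in the way used above; this is standard, but it does rest on a cohomology-and-base-change statement for $f_*\calO_Y$. Everything else is essentially formal once the Stein factorization is in place.
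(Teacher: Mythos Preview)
Your proposal is correct and follows essentially the same approach as the paper: both use the Stein factorisation $Y \to Z \to X$, observe that $a_* \calO_Y = \calO_Z$ gives $\calO_\perf(Z) \simeq \calO_\perf(Y)$, and invoke the topological invariance lemma once one knows the finite part $Z \to X$ is a universal homeomorphism. The paper simply asserts this last point (``finite surjective with geometrically connected fibres, hence a universal homeomorphism''), whereas you spell out the argument via the bijection between geometric points of $h^{-1}(\bar x)$ and geometric connected components of $Y_{\bar x}$; your extra detail is accurate and your worry about the base-change input is well placed but, as you note, standard (EGA III 4.3).
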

\begin{proof}
	The Stein factorisation $Y \stackrel{a}{\to} Y' \stackrel{b}{\to} X$ satisfies: $a_* \calO_Y = \calO_{Y'}$, and $b$ is finite surjective with geometrically connected fibres. Then $b$ is a universal homeomorphism, so $\calO_\perf(X) \simeq \calO_\perf(Y') \simeq \calO_\perf(Y)$.
\end{proof}

\begin{lemma}
Let $f:Y \to X$ be a finite surjective morphism of affine schemes, and let $R \subset Y \times Y$ be the reduced subscheme underlying $Y \times_X Y$. Then the quotient $Y/R$ (in the sense of algebraic spaces) exists, and is an affine scheme. Moreover, the natural map $Y/R \to X$ is universal homeomorphism.
\end{lemma}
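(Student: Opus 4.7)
Write $A = \Gamma(X,\calO_X)$ and $B = \Gamma(Y,\calO_Y)$, so $f$ corresponds to a finite ring map $A \to B$ with $\Spec B \to \Spec A$ surjective, and $R = \Spec\bigl((B\otimes_A B)/\sqrt{0}\bigr)$. Define the $A$-subalgebra
\[
C \;:=\; \bigl\{\, b \in B \,:\, b \otimes 1 - 1 \otimes b \in \nil(B\otimes_A B) \,\bigr\} \;\subseteq\; B,
\]
which by construction is the equalizer of the two projection maps $B \rightrightarrows \Gamma(R,\calO_R) = (B\otimes_A B)/\sqrt{0}$; hence $\Spec C$ already coequalizes $R \rightrightarrows Y$ in the category of affine schemes. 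I will show (i) $\Spec C \to \Spec A$ is a universal homeomorphism, and (ii) $\Spec C$ represents the algebraic-space quotient $Y/R$.

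For (i), I check that $\Spec C \to \Spec A$ is integral, surjective, and universally injective. Integrality is inherited from $C \subset B$ with $B/A$ integral, and surjectivity follows by factoring the given surjection $\Spec B \to \Spec A$ through $\Spec C$. For universal injectivity I analyze geometric fibres: fix a geometric point $\bar x \to \Spec A$ with algebraically closed residue field $k$, and decompose the finite $k$-algebra $B\otimes_A k = \prod_{i=1}^r R_i$ as a product of local Artinian factors. Since $k$ is algebraically closed, each residue field $R_i/\mathfrak{m}_i$ equals $k$, and each tensor product $R_i\otimes_k R_j$ is itself local Artinian with residue field $k$. Writing the image of $b \in B$ in $B\otimes_A k$ as $(c_i + n_i)_i$ with $c_i \in k$ and $n_i \in \mathfrak{m}_i$, the nilpotence condition defining $C$ base-changes to the scalar equality $c_i = c_j$ for all $i, j$. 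Any $k$-algebra map $\phi \colon C\otimes_A k \to k$ extends along the integral inclusion $C \subset B$ to a factor projection $B\otimes_A k \to R_i \to k$ for some $i$, and hence sends each $b \in C$ to its common scalar $c_i$ — a value independent of the choice of $i$ — so such a $\phi$ is unique. Since $C\otimes_A k$ is finite-dimensional over $k$ (as $C$ is a submodule of the finite $A$-module $B$ over the Noetherian ring $A$) and hence Artinian, its spectrum equals its set of closed points, which is thus a single geometric point. Combined with surjectivity, this establishes universal injectivity.

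For (ii), the relation $R \rightrightarrows Y$ is a finite equivalence relation on $Y$ (the projections $R \to Y$ are finite because $Y\times_X Y \to Y$ is), so Kollár's theorem \cite{KollarQuotientsByFinite} produces $Y/R$ as a Noetherian algebraic space with $q \colon Y \to Y/R$ finite surjective. Since $Y$ is affine, Chevalley's theorem (extended to algebraic spaces) forces $Y/R$ to be affine; writing $Y/R = \Spec D$, the universal property identifies $D$ with the ring of $R$-invariants in $B$, which is precisely $C$ by construction. The main technical content lies in the geometric-fibre analysis in (i), specifically exploiting algebraic closedness of $k$ to ensure $R_i\otimes_k R_j$ is local with residue field $k$; this is what cleanly converts the nilpotence of $b\otimes 1 - 1\otimes b$ into the scalar equality $c_i = c_j$. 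The remaining ingredients — the existence and affineness of $Y/R$, and the factoring of the integral surjection $\Spec B \to \Spec A$ — are standard (Kollár, Chevalley).
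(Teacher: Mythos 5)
The paper gives no argument here at all: it simply cites Koll\'ar's ``Example 5'' in \emph{Quotients by finite equivalence relations}, so your proposal is a self-contained proof where the paper defers to a reference. Your construction is in fact essentially the content of that example: the quotient is $\Spec C$ with $C$ the equalizer of the two maps $B \rightrightarrows (B\otimes_A B)/\sqrt{0}$, and the substance is the geometric-fibre computation showing $\Spec C \to \Spec A$ is universally injective. That computation is correct: for $k=\bar k$ the ring $R_i\otimes_k R_j$ is local Artinian with residue field $k$, so nilpotence of $b\otimes 1 - 1\otimes b$ forces $c_i=c_j$, and the integrality/surjectivity bookkeeping is right. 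Two small points deserve more care. First, ``extends along the integral inclusion'' is slightly off, since $C\otimes_A k \to B\otimes_A k$ need not be injective; what you actually use is that $\Spec B \to \Spec C$ is surjective (lying over for the injective integral map $C\hookrightarrow B$) and that surjectivity is stable under base change, so every maximal ideal of $C\otimes_A k$ is the contraction of some $\mathfrak m_i$, whence the induced map to $k$ is $b\mapsto c_i$. Second, if $Y$ is non-reduced, $R=(Y\times_X Y)_{\red}$ is only a \emph{set-theoretic} equivalence relation (the diagonal of $Y$ does not factor through it), so when invoking Koll\'ar's existence theorem in step (ii) you should cite the set-theoretic version; alternatively you can bypass the existence theorem entirely by declaring $Y/R:=\Spec C$ and checking the universal property directly from the equalizer description, which makes part (ii) a formality. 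Neither point is a gap in substance.
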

\begin{proof}
See \cite[Example 5]{KollarQuotientsByFinite}.
\end{proof}

\begin{lemma}
\label{lem.CohomOfOPerfVsExceptional}
	Let $f:Y \to X$ be a finite surjective morphism of affine schemes that is an isomorphism over an open $U \subset X$. Let $Z  = X - U$, and $Z' = f^{-1}(Z)$ be the induced closed subschemes. Then the following sequence is exact:
	\[ 0 \to \calO_\perf(X) \to \calO_\perf(Y) \oplus \calO_\perf(Z) \to \calO_\perf(Z') \to 0.\]
\end{lemma}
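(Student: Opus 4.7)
The plan is to translate the lemma into a direct algebraic computation with perfections of rings, reduce to the case where $\calO(X) \hookrightarrow \calO(Y)$, and then verify exactness at each spot via Frobenius manipulations combined with a conductor-type bound. Set $A = \calO(X)$, $B = \calO(Y)$, and let $I \subset A$ be the radical ideal defining $Z$, so that $\calO(Z') = B/IB$. Using that $\calO_\perf$ is topologically invariant (the first lemma of this section), I replace all four schemes by their reductions and so assume $A$ and $B$ are reduced. Two structural observations then follow: since $f$ is finite surjective, $\ker(f^*)$ is contained in every prime of $A$ and so vanishes by reducedness, giving $A \hookrightarrow B$; and since $f$ is an isomorphism over $U$, the finitely generated $A$-module $B/A$ is supported on $Z$, yielding a conductor-type bound $I^N B \subseteq A$ for some $N \geq 1$.

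Injectivity and surjectivity are then painless. If a class in $A_\perf$ dies in $B_\perf$, it has a representative $\alpha \in A$ with $\alpha^{p^m} = 0$ in $B$; since $A$ injects into $B$ and is reduced, $\alpha = 0$. Any class in $(B/IB)_\perf$ lifts, after a Frobenius power, to some $\beta \in B$, and $(\beta, 0) \in B \oplus A/I$ then maps to it.

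The crux is middle exactness. Given $(b, \bar a) \in B_\perf \oplus (A/I)_\perf$ whose image in $(B/IB)_\perf$ vanishes, I pick representatives $\beta \in B$ for $b$ and $\tilde\alpha \in A$ for $\bar a$ and, after replacing each by a common Frobenius power, arrange $\beta - \tilde\alpha \in IB$ in $B$. Applying Frobenius $k$ more times then gives, in characteristic $p$,
\[
\beta^{p^k} - \tilde\alpha^{p^k} \;=\; (\beta - \tilde\alpha)^{p^k} \;\in\; I^{p^k} B \;=\; I \cdot (I^{p^k-1} B) \;\subseteq\; I \cdot A \;=\; I,
\]
as soon as $p^k \geq N+1$. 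Thus $\gamma := \beta^{p^k}$ actually lies in $A$ and reduces to $\tilde\alpha^{p^k}$ modulo $I$, so the class of $\gamma$ in $A_\perf$ is the desired preimage of $(b, \bar a)$. The main obstacle is precisely this step: one first needs Frobenius to push the discrepancy into $IB$ (killing nilpotents in $B/IB$), and then must iterate Frobenius far enough for the conductor bound $I^N B \subseteq A$ to convert the leftover $B$-factor in $I^{p^k} B$ back into an element of $A$.
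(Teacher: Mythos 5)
Your proof is correct, but it takes a genuinely different route from the paper's. The paper disposes of this lemma in one line: it appeals either to the preceding lemma (Koll\'ar's result that the quotient $Y/R$ by the reduced equivalence relation $R \subset Y\times_X Y$ exists as an affine scheme with $Y/R \to X$ a universal homeomorphism) or to the weak normality of $\calO_\perf(X)$; combined with the topological invariance of $\calO_\perf$, this identifies $\calO_\perf(X)$ with the fiber product of $\calO_\perf(Y)$ and $\calO_\perf(Z)$ over $\calO_\perf(Z')$. You instead argue directly with the rings $A=\calO(X)$, $B=\calO(Y)$ and the (radical) ideal $I$ of $Z$: after passing to reductions (harmless, since perfection is insensitive to nilpotents, and the hypotheses of finiteness, surjectivity and isomorphism over $U$ are preserved), surjectivity of $f$ gives $A\hookrightarrow B$, the isomorphism over $U$ plus Noetherianness gives the conductor bound $I^N B\subseteq A$, and additivity of Frobenius first pushes the discrepancy $\beta-\tilde\alpha$ into $IB$ and then, for $p^k>N$, into $I^{p^k}B\subseteq I\subseteq A$, so that $\beta^{p^k}$ is an honest element of $A$ hitting the given pair of classes; exactness at the two ends is immediate. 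Each approach buys something: the paper's is shorter and isolates the geometric content (the pushout of $Y$ and $Z$ along $Z'$ is universally homeomorphic to $X$, and perfection cannot distinguish universally homeomorphic schemes), at the cost of invoking Koll\'ar's quotient theorem or weak normality of perfections; yours is elementary and self-contained, makes the middle-exactness mechanism completely explicit (the conductor converts a $p^k$-th power of an $IB$-element back into an element of $A$), and checks all three spots by hand, which is arguably more transparent for a reader unfamiliar with quotients by finite equivalence relations.
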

\begin{proof}
This follows from the previous lemma or from the fact that $\cO_{\perf}(X)$ is weakly normal.
\end{proof}

\begin{lemma}
	Let $f:Y \to X$ be a proper surjective map. Assume $f$ is an isomorphism outside a closed subscheme $Z \subset X$, and let $E = f^{-1}(Z)$ (with the induced scheme structure). Then the sequence
	\[ \mR\Gamma_\Nis(X,\calO_\perf) \to \mR\Gamma_\Nis(Y,\calO_\perf) \oplus \mR\Gamma_\Nis(Z,\calO_\perf) \to \mR\Gamma_\Nis(E,\calO_\perf)\]
	is a distinguished triangle.
\end{lemma}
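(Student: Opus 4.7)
\emph{Plan.} The plan is to reduce the statement to the affine case, then via Stein factorisation to the finite case, where the desired distinguished triangle collapses to the short exact sequence of \autoref{lem.CohomOfOPerfVsExceptional}.

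First, since $\calO_\perf$ is a Nisnevich sheaf and the abstract blow-up datum $(X, Z, Y, E)$ is preserved by Nisnevich base change on $X$, a standard Mayer--Vietoris spectral sequence argument reduces us to the case where $X = \Spec A$ is affine (so $Z$ is also affine). In this setting, since $\calO_\perf = \colim_F \calO$ is a filtered colimit of coherent sheaves and cohomology on quasi-compact quasi-separated schemes commutes with filtered colimits of abelian sheaves, we have
\[ \mR\Gamma_\Nis(W, \calO_\perf) \simeq \colim_F \mR\Gamma(W, \calO_W) \qquad \text{for } W \in \{X, Y, Z, E\}. \]
Hence the desired distinguished triangle is the Frobenius colimit of the Mayer--Vietoris sequence of the square $(X, Z, Y, E)$ in ordinary coherent cohomology.

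Second, I would reduce to the case where $f$ is finite. Take the Stein factorisation $f = f' \circ g$ with $g \colon Y \to Y'$ proper with geometrically connected fibres and $f' \colon Y' \to X$ finite surjective. Because $f$ is an isomorphism outside $Z$, so is $f'$, and $g$ is an isomorphism outside $E' := (f')^{-1}(Z)$. The preceding geometrically-connected-fibres lemma identifies $\calO_\perf(Y) \simeq \calO_\perf(Y')$ and $\calO_\perf(E) \simeq \calO_\perf(E')$ at the level of global sections. If one lifts this identification to the derived level, the general proper case reduces to the finite case for $f'$. In that finite case, $Y$ is automatically affine (hence so is $E$), so higher Nisnevich cohomology of the filtered colimit $\calO_\perf$ vanishes on all four schemes and the distinguished triangle collapses to a short exact sequence of global sections, which is exactly \autoref{lem.CohomOfOPerfVsExceptional}.

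The main obstacle is the derived upgrade in the Stein-factorisation step: the preceding lemma is only an $H^0$-statement, so one must separately control the higher cohomology of $\calO_\perf$ along proper maps with geometrically connected fibres (a naive cohomological vanishing fails, \cf the resolution of the cone over an ordinary elliptic curve, where $R^1 g_* \calO_Y$ is nonzero and Frobenius acts bijectively on it). I would attempt to handle this by a Noetherian induction on the dimension of $E$, applying the very distinguished-triangle statement under proof to the map $g \colon Y \to Y'$ with exceptional locus $E \subset Y$ (mapping down to $E' \subset Y'$), so that the proof is organised as a single induction that simultaneously establishes the derived identification $\mR\Gamma(Y, \calO_\perf) \simeq \mR\Gamma(Y', \calO_\perf)$ and the finite-case reduction.
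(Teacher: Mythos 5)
Your reduction of the $H^0$-part is fine and matches the paper: after passing to affine $X$ and writing $\mR\Gamma_\Nis(-,\calO_\perf)$ as the Frobenius colimit of coherent cohomology, the exactness of $0 \to \calO_\perf(X) \to \calO_\perf(Y)\oplus\calO_\perf(Z) \to \calO_\perf(E) \to 0$ is obtained exactly as you say, via Stein factorisation and \autoref{lem.CohomOfOPerfVsExceptional}. The gap is in the higher cohomology, and your proposed fix does not close it. First, the identification you aim to establish inside the induction, $\mR\Gamma(Y,\calO_\perf)\simeq\mR\Gamma(Y',\calO_\perf)$ for the Stein map $g\colon Y\to Y'$, is simply false, and your own example shows it: for the resolution of the cone over an ordinary elliptic curve, $H^1(Y,\calO_\perf)=\colim_F H^1(Y,\O_Y)\neq 0$ while $Y'$ is affine, so $H^1(Y',\calO_\perf)=0$. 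What is true (and equivalent to the lemma for $g$) is only that the \emph{fibres} of $\mR\Gamma(Y,\calO_\perf)\to\mR\Gamma(E,\calO_\perf)$ and $\mR\Gamma(Y',\calO_\perf)\to\mR\Gamma(E',\calO_\perf)$ agree. Second, the induction is circular: the statement for $g$ has exceptional preimage equal to the same $E$ (same dimension), and in the hardest cases -- $f$ birational with $f_*\O_Y=\O_X$, e.g.\ a blowup or a resolution of a normal variety -- the Stein factorisation is trivial ($Y'=X$, $f'=\mathrm{id}$), so the reduction step returns the original map and no complexity measure decreases. The finite case you do handle carries only the $H^0$-content; the entire higher-cohomology difficulty is untouched.

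The missing mechanism, which is the actual content of the paper's proof, is that for $i>0$ the restriction $H^i(Y,\calO_\perf)\to H^i(E,\calO_\perf)$ is an \emph{isomorphism}, so these terms cancel against each other in the triangle; one does not compare $Y$ with $Y'$ at all. Concretely: since $f$ is an isomorphism outside $Z$, the modules $H^i(Y,I_E^c)$ ($i>0$) are $I_Z$-power torsion, and the theorem on formal functions (\cite[Corollaire 3.3.2]{EGAIII1}) gives $H^i(Y,I_E^n)=I_Z^{n-c}\cdot H^i(Y,I_E^c)$ for $n\gg 0$, whence $H^i(Y,I_E^m)=0$ for some $m\gg 0$ and all $i>0$. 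The sequence $0\to I_E^m\to \O_Y\to\O_{mE}\to 0$ then gives $H^i(Y,\O_Y)\simeq H^i(mE,\O_{mE})$ for $i>0$, and passing to perfections kills the difference between $mE$ and $E$ (topological invariance of $\calO_\perf$), yielding the claimed isomorphism. Your plan correctly diagnoses that naive vanishing fails, but it contains no substitute for this formal-functions-plus-perfection argument, and the induction as organised cannot produce one.
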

\begin{proof}
	We may assume that $X$ and hence $Z$ is affine. It is then enough to check:
	\begin{enumerate}
	\item $H^i_\Nis(Y,\calO_\perf) \simeq H^i_\Nis(E,\calO_\perf)$ for $i > 0$.
\label{enum.pt1}
	\item The sequence
\label{enum.pt2}
		\[0 \to \calO_\perf(X) \to \calO_\perf(Y) \oplus \calO_\perf(Z) \to \calO_\perf(E) \to 0\]
		is exact.
	\end{enumerate}
For \eqref{enum.pt2}, let $Y \to X' \to X$ be the Stein factorisation, and let $Z' \subset X'$ be the inverse image of $Z$. Then the sequence in \eqref{enum.pt2} is identified with
\[ 0 \to \calO_\perf(X) \to \calO_\perf(X') \oplus \calO_\perf(Z) \to \calO_\perf(Z') \to 0\]
as the fibres of $Y \to X'$ are geometrically connected, so the exactness comes from the previous lemma.

Now we tackle \eqref{enum.pt1}.  Let $I$ be the defining ideal of $Z \subseteq X$ so that $I\cdot \O_Y$ also defines $E$.  Note that by \cite[Lemma 29.20.4(1), Tag 02O7]{stacks-project}, there is a $c$ such that if $n \geq c$ then
\[
\ker\big(H_{\Nis}^i(Y,\O_Y) \to H^i_{\Nis}(Y,\O_Y/I^n)\big) \subseteq I^{n - c} H_{\Nis}^i(Y,\O_Y)
\]
However, for $n \gg 0$ and because $i > 0$, we know that $I^{n-c}$ annihilates $H^i_{\Nis}(Y, \O_Y)$ by hypothesis.  Hence
\begin{equation}
\label{eq.InjectionOkFormalFunc}
H_{\Nis}^i(Y,\O_Y) \hookrightarrow H^i_{\Nis}(Y,\O_Y/I^n)
\end{equation}
injects.
On the other hand, by \cite[Lemma 29.20.4(3), Tag 02O7]{stacks-project}
\begin{equation}
\label{eq.ImEqualFormFunc}
\begin{array}{rlcl}
& \Image\big(& \hspace*{-1em} H_{\Nis}^i(Y, \O_Y/I^m) & \hspace*{-0.75em} \to H_{\Nis}^i(Y, \O_Y/I^n)\big)\\
 = \hspace*{-0.75em} & \Image\big(& \hspace*{-1em} H_{\Nis}^i(Y, \O_Y) & \hspace*{-0.75em} \to H_{\Nis}^i(Y, \O_Y/I^n)\big)
\end{array}
\end{equation}
for $m \gg n \gg 0$.

Next choose $n \gg 0$ and fix $e \gg 0$.  We consider the diagram below where the horizontal maps are induced by the canonical maps and the vertical maps are Frobenius (note the rows are \emph{not} exact).
\[
\xymatrix{
H_{\Nis}^i(Y, \O_Y) \ar[d]_{F^e} \ar@{^{(}->}[r] & H_{\Nis}^i(Y, \O_Y/I^n) \ar[d]_{F^e} \ar[r] & H_{\Nis}^i(Y, \O_Y/I)  \ar[d]_{F^e} \\
H_{\Nis}^i(Y, \O_Y)              \ar@{^{(}->}[r]_{\beta} & H_{\Nis}^i(Y, \O_Y/I^n)              \ar[r] & H_{\Nis}^i(Y, \O_Y/I)
}
\]
We are about to take direct limits along Frobenius (vertically) and we observe that
\[
\varinjlim_{F} H_{\Nis}^i(Y, \O_Y/I^n) = \varinjlim_{F} H_{\Nis}^i(Y, \O_Y/I).
\]
From the diagram, we see that the middle vertical map factors as
\[
 F^e : H_{\Nis}^i(Y, \O_Y/I^n) \to  H_{\Nis}^i(Y, \O_Y/I^{p^en}) \to  H_{\Nis}^i(Y, \O_Y/I^n)
\]
where the second map is the canonical one and the first is induced by Frobenius on $Y$.
Hence the image of the middle vertical map is contained in the image of $\beta$ by \eqref{eq.ImEqualFormFunc}.  An application of \eqref{eq.InjectionOkFormalFunc} then implies
\[
{
    \renewcommand{\arraystretch}{1.25}
\begin{array}{rl}
  & H^i_\Nis(Y,\calO_\perf) \\
= &  \displaystyle \varinjlim_{F} H_{\Nis}^i(Y, \O_Y)\\
= & \displaystyle\varinjlim_{F} H_{\Nis}^i(Y, \O_Y/I^n)\\
= &  \displaystyle\varinjlim_{F} H_{\Nis}^i(E, \O_Y/I)\\
= &  H^i_\Nis(E,\calO_\perf).
\end{array}
}
\]
This completes the proof.
%
\end{proof}

\begin{remark}
The proof of \eqref{enum.pt2} in the previous version of this paper was incorrect.  The problem was we misinterpreted  \cite[Corollaire 3.3.2]{EGAIII1}.  In particular, there it is written that if $\pi : Y \to X = \Spec R$ is proper, $\mathcal{F}$ a coherent sheaf on $Y$ and $I \subseteq R$ is an ideal, then for all $r \geq k \gg 0$ we have that:
\[
H^i(Y, I^{k+r} \mathcal{F}) = I^r \cdot H^i(Y, I^k \mathcal{F}).
\]
The problem is that the multiplication on the right side is \emph{not} the ordinary action of $R$ on $H^i(Y, I^k \mathcal{F})$.  Rather $I^r \cdot H^i(Y, I^k \mathcal{F})$ is viewed as a subset of $H^i(Y, I^{k+r} \mathcal{F})$.  The composition
\[
I^r \cdot H^i(Y, I^k \mathcal{F}) \subseteq H^i(Y, I^{k+r} \mathcal{F}) \to H^i(Y, I^k \mathcal{F})
\]
yields the ordinary action.

In our case, we incorrectly assumed that it was the ordinary action in \cite[Corollaire 3.3.2]{EGAIII1}.  Thus since we were taking $\mathcal{F} = \O_Y$ and $I$ so that $I^r \cdot H^i(Y, I^k \mathcal{F}) = 0$ for $r \gg 0$, we concluded that $H^i(Y, I^{k+r} \O_Y) = 0$ for $i > 0$.  This is not true.  Indeed suppose that $Y$ is a resolution of an isolated normal non-rational singularity $(X, x)$ and that $I = \langle f \rangle$ is a principal ideal vanishing at $x$.  Then $H^i(Y, \O_Y) \neq 0$ for some $i > 0$ hence $I^r \cdot H^i(Y, I^k \O_Y) = H^i(Y, f^{r+k} \O_Y) \cong H^i(Y, \O_Y) \neq 0$.  We thank Christopher Hacon and Linquan Ma for pointing out that our original proof was incorrect.
\end{remark}

\begin{lemma}
	$\calO_\perf$ is an $h$-sheaf, i.e., $$\calO_\perf \in \Shv_h(\Sch) \subset \Shv_\Nis(\Sch).$$
\end{lemma}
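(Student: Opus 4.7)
The plan is to verify the sheaf axiom for the two generating families of $h$-covers: cdh-covers and fppf covers. Recall (from Voevodsky's foundational work, or \cite[\S 3]{GoodwillieLichtenbaumCohomologicalBound}) that any $h$-cover of a finite type $S$-scheme admits a refinement obtained by composing a cdh-cover with an fppf cover, so a presheaf is an $h$-sheaf iff it is both a cdh-sheaf and an fppf sheaf.

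First, I would check that $\calO_\perf$ is an fppf sheaf. The structure presheaf $\calO$ is itself an fppf sheaf on $\Sch$ by faithfully flat descent, and the absolute Frobenius induces a map of fppf sheaves $F^*\colon \calO \to \calO$. Since fppf covers of schemes in $\Sch$ are of finite presentation, the sheaf condition can be checked on finite diagrams, and so it commutes with filtered colimits. Therefore
\[
\calO_\perf \;=\; \colim\bigl(\calO \xrightarrow{F^*} \calO \xrightarrow{F^*} \calO \to \dots\bigr)
\]
is again an fppf sheaf; in particular, it is a Nisnevich sheaf.

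Second, I would check cdh-descent. Since $\calO_\perf$ is Nisnevich by the previous step, by the standard description of the cdh-topology as generated by Nisnevich covers together with abstract blowup squares, it suffices to check the Mayer--Vietoris exact sequence for an abstract blowup: given $f\colon Y \to X$ proper surjective that is an isomorphism outside a closed subscheme $Z \subset X$, with $E = f^{-1}(Z)$, we need
\[
0 \to \calO_\perf(X) \to \calO_\perf(Y) \oplus \calO_\perf(Z) \to \calO_\perf(E)
\]
to be exact. But this is exactly statement (b) established in the proof of the preceding lemma (which in fact gave a stronger cohomological statement); its $H^0$-part is what we need.

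Combining the two, $\calO_\perf$ is an $h$-sheaf. The main obstacle is really just invoking the generation statement for the $h$-topology in terms of cdh and fppf; once that is available the verification of each piece is light since it has already been done (fppf descent from the case of $\calO$ and the preservation of sheafiness under filtered colimits, cdh descent from the abstract blowup Mayer--Vietoris established above).
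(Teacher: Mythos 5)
Your steps (1) and (2) are fine and in fact agree with what the paper does: $\calO_\perf$ is an fppf (hence Nisnevich) sheaf because the sheaf condition for quasi-compact covers involves only finite limits and so passes to the filtered colimit $\colim (\calO \to[F^*] \calO \to \cdots)$, and $cdh$-sheafness then follows from Nisnevich descent plus the abstract-blowup exact sequence of the preceding lemma. The problem is your step (3). The assertion that every $h$-cover in $\Sch$ can be refined by an fppf cover composed with a $cdh$-cover, so that ``$cdh$-sheaf $+$ fppf sheaf $\Rightarrow$ $h$-sheaf,'' is not something you can recall from Voevodsky's structure theory or from \cite[\S 3]{GoodwillieLichtenbaumCohomologicalBound}: neither source contains it. What the structure theorem for $h$-covers gives is a refinement of the form (Zariski cover) $\circ$ (proper surjective, generically finite) --- equivalently (Zariski) $\circ$ (finite surjective) $\circ$ (blowup) --- and the finite surjective piece is in general \emph{not} flat, hence not an fppf cover, nor is it a $cdh$-cover. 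Converting it into one requires flattening by a further abstract blowup (Raynaud--Gruson), which creates a new center $Z$ over which the original cover must be dealt with again, forcing an induction on dimension; one must then splice the flat-descent data over the flattened part with the data over $Z$ and descend along the $cdh$-cover $X' \sqcup Z \to X$. That is exactly the content of the paper's proof (the quasi-section, the square comparing $Y' \sqcup (Y\times_X Z) \to X' \sqcup Z$ with $Y \to X$, flat descent, the dimension induction, and the final diagram chase). So your proposal compresses the entire substance of the lemma into an unproven refinement claim with an unsupported citation; the statement you invoke is true, but proving it is the lemma.

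If you want to salvage your route, you would need to (i) prove the refinement/generation statement yourself --- Voevodsky's refinement theorem, then platification of a generically finite quasi-section by an abstract blowup, then Noetherian induction on the center, checking that the resulting finite flat map is surjective --- and (ii) spell out the standard (easy, but not contentless) argument that a presheaf satisfying the sheaf axiom for two classes of covers stable under base change and composition satisfies it for all finite composites, hence for the topology they generate. Point (ii) is routine; point (i) is precisely the induction carried out in the paper, so you gain nothing over the paper's proof except the cosmetic step of stating it as a fact about topologies rather than about $\calO_\perf$.
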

\begin{proof}
We first observe that $\calO_\perf$ is a $cdh$-sheaf by the lemmas above. For the rest, fix $X \in \Sch$. We will prove the sheaf axiom for $h$-covers of $X$ by induction on $\dim(X)$. If $\dim(X) = 0$, by passage to reductions, we conclude using the fact that $\calO_\perf$ is an fppf sheaf. In general, any $h$-cover of $X \in \Sch$ can be refined by one of the form $\sqcup U_i \stackrel{a}{\to} Y \stackrel{b}{\to} X$ where $a$ is a Zariski cover, and $b$ is a proper surjective generically finite morphism. As we already know Zariski descent, it suffices to show that
\[ 1 \to \calO_\perf(X) \stackrel{b^*}{\to} \calO_\perf(Y) \stackrel{p_1^* - p_2^*}{\to} \calO_\perf(Y \times_X Y) \]
is exact. There is an abstract blowup $X' \to X$ centered along a closed subset $Z \subset X$ such that the strict transform $Y' \to X'$ of $Y \to X$ is flat and surjective. This gives a diagram
\[ \xymatrix{ Y' \sqcup (Y \times_X Z) \ar[rd] \ar[r] & (Y \times_X X') \sqcup (Y \times_X Z) \ar[r] \ar[d] & Y \ar[d] \\
		& X' \sqcup Z \ar[r] & X. }\]
By induction, the commutativity of this diagram (and the flatness and surjectivity of $Y' \to X'$) immediately show that $b^*:\calO_\perf(X) \to \calO_\perf(Y)$ is injective. One also immediately deduces that a class in the kernel of $\calO_\perf(Y) \stackrel{p_1^* - p_2^*}{\to} \calO_\perf(Y \times_X Y)$ defines one in the similar object for the cover $Y' \sqcup (Y \times_X Z) \to X' \sqcup Z$, and, by induction,  descends to a class in $\calO_\perf(X') \oplus \calO_\perf(Z)$ as $\calO_\perf$ is an $h$-sheaf. A diagram chase shows that the resulting two classes on the inverse image of $Z$ in $X'$ agree, so we conclude by $cdh$-descent for the cover $X' \sqcup Z \to X$.
\end{proof}

\begin{lemma}[Cortinas, Haesemeyer, Schlichting, Weibel]
Fix $K \in D(\Shv_\Nis(\Sch))$. Assume that for an abstract blowup $f:Y \to X$ centered along some $Z \subset X$, the sequence
\[ \mR\Gamma_\Nis(X,K) \to \mR\Gamma_\Nis(Y,K) \oplus \mR\Gamma_\Nis(Z,K) \to \mR\Gamma_\Nis(f^{-1}Z,K)  \]
is a distinguished triangle. Then $K$ satisfies $cdh$-descent.
\end{lemma}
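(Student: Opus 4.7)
The plan is to invoke the descent criterion of Cortinas--Haesemeyer--Schlichting--Weibel already flagged in the remark preceding the statement, \ie{} \cite[Theorem 3.4]{CortinasHaesemayerSchlichtingWeibelCyclicHomology}. For $K \in D(\Shv_\Nis(\Sch))$, that criterion asserts that $cdh$-descent is equivalent to the conjunction of Nisnevich descent and the Mayer--Vietoris property for every abstract blowup square. Since $K$ is by hypothesis a complex of Nisnevich sheaves, and $\Sch$ has finite Nisnevich cohomological dimension (its objects are finite-type schemes over a Noetherian excellent base), Nisnevich descent for $K$ is automatic. Hence only the Mayer--Vietoris condition requires verification, and this is precisely our hypothesis.

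More concretely, first I would recall that the $cdh$-topology is generated over the Nisnevich topology by covers of the form $Y \sqcup Z \to X$ associated to an abstract blowup $f : Y \to X$ centered along $Z \hookrightarrow X$. Interpreted for the derived functor $\mR\Gamma_\Nis(-, K)$, the sheaf axiom for such a cover is precisely the statement that
\[
\mR\Gamma_\Nis(X,K) \to \mR\Gamma_\Nis(Y,K) \oplus \mR\Gamma_\Nis(Z,K) \to \mR\Gamma_\Nis(f^{-1}Z,K) \xrightarrow{+1}
\]
is a distinguished triangle; equivalently, the associated square is homotopy Cartesian. Since this holds for every abstract blowup $f$ by hypothesis, the CHSW criterion yields the desired $cdh$-descent.

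The only real obstacle is to match the hypothesis of the lemma with CHSW's notion of an ``elementary distinguished $cdh$-square,'' which is immediate from the definitions. If one wished to avoid the CHSW citation, the alternative would be to proceed by hyperdescent: every $cdh$-hypercover of $X$ may be refined by an iteration of Nisnevich covers and abstract blowup squares, so one inducts on the stage of the hypercover, using the Mayer--Vietoris triangle to reduce descent for each blowup step to Nisnevich descent that is given for free. This is essentially the content of Voevodsky's original treatment of the $cdh$-topology \cite{VoevodskyCompletelyDecomposed}, and would be the natural fallback approach.
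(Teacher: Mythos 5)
Your proposal is correct and coincides with the paper's approach: the paper's entire proof is the citation ``See \cite[Theorem 3.4]{CortinasHaesemayerSchlichtingWeibelCyclicHomology}'', which is exactly the CHSW criterion you invoke. Your additional remarks matching the hypothesis to the Mayer--Vietoris condition for abstract blowup squares are a reasonable elaboration but not a departure from the paper's argument.
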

\begin{proof}
See \cite[Theorem 3.4]{CortinasHaesemayerSchlichtingWeibelCyclicHomology}.
\end{proof}

\begin{lemma}
The $cdh$-sheafification of $X \mapsto H^i_h(X,\calO_\perf)$ is $0$ for $i > 0$.
\end{lemma}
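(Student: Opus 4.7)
The plan is to combine the $cdh$-descent for $\calO_\perf$ just established (via the abstract-blowup excision lemma together with the Cortinas--Haesemeyer--Schlichting--Weibel criterion stated immediately above) with a direct computation of fppf cohomology, invoking the structural fact that the $h$-topology is the join of the $cdh$- and fppf-topologies. Once these ingredients are in place, the bicomplex argument is forced, so most of the work is setup.

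First I would record the key fppf vanishing: for any affine $X$ and $i > 0$,
\[ H^i_f(X,\calO_\perf) \simeq \colim_F H^i_f(X,\calO_X) \simeq \colim_F H^i_{\mathrm{Zar}}(X,\calO_X) = 0, \]
using that cohomology on the Noetherian site $\Sch$ commutes with filtered colimits of sheaves, that fppf cohomology of a quasi-coherent sheaf agrees with Zariski cohomology, and that affine schemes have no higher Zariski cohomology for $\calO$. The same colimit computation also shows $H^i_\Nis(X,\calO_\perf) = 0$ on affine $X$ for $i > 0$, a fact which will be used at the end.

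Second, the assertion is local in the $cdh$-topology, so it suffices to show that for affine $X$ and any $\alpha \in H^i_h(X,\calO_\perf)$ with $i > 0$, there is a $cdh$-cover of $X$ killing $\alpha$. I would represent $\alpha$ by a cocycle on some $h$-hypercover $X_\mydot \to X$ and refine $X_\mydot$ using the structural statement that any $h$-cover admits a refinement by an fppf cover composed with a $cdh$-cover. The resulting double Čech complex then filters naturally: running the fppf direction first, the vanishing above kills the higher fppf rows; then running the $cdh$ direction, the already-established $cdh$-descent for $\calO_\perf$ identifies $H^i_{cdh}(X,\calO_\perf)$ with $H^i_\Nis(X,\calO_\perf)$, which vanishes on affine $X$ in positive degrees by the first step. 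Hence $\alpha$ becomes zero on a suitable $cdh$-cover of $X$, proving the lemma.

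The main obstacle is making the join-of-topologies refinement precise and organising the bicomplex computation cleanly—both steps are essentially standard, but carrying them out in the hypercover setting requires some care (and is morally analogous to the CHSW argument cited in the preceding lemma), especially because $\calO_\perf$ is not itself quasi-coherent, so one must invoke filtered-colimit arguments throughout rather than relying on classical quasi-coherent cohomology in a single step.
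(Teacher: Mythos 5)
Your proposal takes a genuinely different route from the paper, and as written it has a real gap at its decisive step. First, the ``join'' statement you invoke is not available in the clean form you use it: an $h$-cover of a Noetherian scheme is not refined by a single fppf cover composed with a single $cdh$-cover; what one actually gets (via Raynaud--Gruson flatification, exactly as in the paper's proof that $\calO_\perf$ is an $h$-sheaf) is an iterated refinement --- abstract blowup, then a proper \emph{flat} surjection over the blowup, then a further refinement over the center $Z$, continuing by Noetherian induction. This is repairable but already changes the shape of your bicomplex. The more serious problem is the collapse argument itself. Your two vanishing inputs ($H^i_f(-,\calO_\perf)=0$ and $H^i_\Nis(-,\calO_\perf)=0$ in positive degrees) hold only on \emph{affine} schemes, whereas the pieces produced by the refinement --- the blowups $X'$, the centers, the proper flat covers $Y'\to X'$, and their fiber products over $X$ --- are not affine, and $\calO_\perf$ does have higher cohomology there (already $H^1(X',\calO_{X'})$ can be nonzero for a blowup of a singular affine scheme). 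So the ``fppf rows'' do not vanish, the Čech complexes you filter do not compute derived $h$-cohomology without a Cartan--Leray/hyperdescent input that itself needs the missing vanishing on these non-affine pieces, and the $cdh$-direction step likewise needs control of exactly those higher direct image terms. Handling them is the actual content of the lemma: in the paper's development this is what the formal-functions lemma accomplishes for $cdh$-descent, and your sketch gives no substitute for the $h$-versus-$cdh$ comparison.

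The paper avoids all of this by a points argument: by Goodwillie--Lichtenbaum, the $cdh$-topology has enough points given by (henselian) valuation rings, so it suffices to show $H^i_h(\Spec R,\calO_\perf)=0$ for $i>0$ when $R$ is a valuation ring; one then passes along an ind-(finite flat) extension $R\to S$ to a valuation ring with algebraically closed fraction field, where every $h$-cover splits, so $\mR\Gamma_h(\Spec S,\calO_\perf)=S_\perf$, and flat descent brings this back to $R$. If you want to pursue your flatification route, you would need to set up a genuine Noetherian induction on dimension (as in the $H^0$ argument in the paper) together with blowup excision triangles in both the $h$- and $cdh$-theories to absorb the non-affine contributions; merely quoting vanishing on affines does not suffice.
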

\begin{proof}
	By work of Goodwillie and Lichtenbaum \cite[Proposition 2.1 and Corollary 3.8]{GoodwillieLichtenbaumCohomologicalBound}, the $rh$-topology has enough points given by spectra of valuation rings, viewed as suitable pro-objects in $\Sch$. As the $cdh$-topology is generated by the $rh$-topology as Nisnevich covers, it follows that henselian valuation rings provide enough points for the $cdh$-topology.  In particular, it suffices to show that $H^i_h(-,\calO_\perf)$ vanishes on spectra of valuation rings for $i > 0$. For a valuation ring $R$, there is an ind-(finite flat) extension $R \to S$ where $S$ is a valuation ring with an algebraically closed fraction field. By flat descent for $\calO_\perf$-cohomology (together with the observation that $\Spec(S) \to \Spec(R)$ is a cofiltered limit of $h$-covers), it suffices to show that $\mR\Gamma_h(\Spec(S),\calO_\perf) = S_\perf$, which is clear: $S$ is a point object for the $h$-topology, i.e., any $h$-cover $Y \to S$ has a section (see \cite[Proposition 2.2]{GoodwillieLichtenbaumCohomologicalBound}).
\end{proof}

\begin{proof}[Proof of Theorem \autoref{GabberTheorem}]
	We already know $\calO_\perf$ is an $h$-sheaf. To identify this $h$-sheaf as the $h$-sheafification of $\calO$, it suffices to observe that the relative Frobenius morphism $X \to X^{(1)}$ for any $X \in \Sch$, which is an $h$-cover, becomes an isomorphism after $h$-sheafification; this, in turn, follows by identifying the reduced subscheme of $X \times_{X^{(1)}} X$ as the diagonal $X \subset X \times_{X^{(1)}} X$ and the fact the passing to reduced subschemes is an isomorphism after $h$-sheafification. Hence, $\calO_\perf$ is the $h$-sheafification of the structure presheaf $\calO$. The last lemma above shows $\mR \nu_* \calO_\perf \simeq \calO_\perf$, while $cdh$-descent shows $\mR \psi_* \calO_\perf \simeq \calO_\perf$.
\end{proof}

\begin{corollary}
\label{thm.GabberIsomorphism}
Suppose $X \in \Sch$ is affine.  Then
\[
	\lim_{\rightarrow} \mR\Gamma(X_\mydot, \O_{X_\mydot}) \simeq \O_{X}^{1/p^{\infinity}}(X) = \O_{\perf}(X).
\]
Here the limit runs over all proper hypercovers $\pi_{\mydot} : X_{\mydot} \to X$.
\end{corollary}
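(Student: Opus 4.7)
The plan is to deduce the corollary from \autoref{GabberTheorem} via $h$-descent plus vanishing of quasi-coherent cohomology on an affine scheme. First, since $X$ is affine and $\calO_X$ is quasi-coherent, $H^i_\Nis(X, \calO_X) = 0$ for $i > 0$; therefore the last comparison in \autoref{GabberTheorem} degenerates to
\[
\mR\Gamma_h(X, \calO_\perf) \simeq \colim \mR\Gamma_\Nis(X, \calO) \simeq \colim_F \calO_X(X) = \calO_\perf(X),
\]
concentrated in degree zero.

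Next, for any single proper hypercover $\pi_\mydot : X_\mydot \to X$, I would apply $h$-descent for the $h$-sheaf $\calO_\perf$ (also a consequence of \autoref{GabberTheorem}): since such a $\pi_\mydot$ is in particular an $h$-hypercover, cohomological descent gives
\[
\mR\Gamma_h(X, \calO_\perf) \simeq \mR\Gamma(X_\mydot, \calO_\perf|_{X_\mydot}) \simeq \colim_r \mR\Gamma(X_\mydot, \calO_{X_\mydot}),
\]
the last identification using that $\calO_\perf = \colim_F \calO$ together with the fact that $\mR\Gamma$ commutes with filtered colimits of quasi-coherent sheaves in our Noetherian setup.

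Finally, to convert the Frobenius-indexed colimit for a fixed $X_\mydot$ into the colimit over all proper hypercovers appearing in the corollary, I would exploit the topological invariance of $\calO_\perf$ recorded in the first lemma after \autoref{GabberTheorem}: each iterated Frobenius endomorphism of $X_\mydot$ is a level-wise universal homeomorphism, and composing with $\pi_\mydot$ yields another proper hypercover of $X$. This lets the Frobenius colimit be absorbed into the hypercover colimit, so that the natural comparison
\[
\colim_{X_\mydot} \mR\Gamma(X_\mydot, \calO_{X_\mydot}) \to \mR\Gamma_h(X, \calO_\perf) \simeq \calO_\perf(X)
\]
becomes a quasi-isomorphism. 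The main technical point is justifying this cofinality (or equivalently, showing the comparison map is a quasi-isomorphism degree by degree using that filtered colimits commute with cohomology); everything else is a direct consequence of the technology developed in the preceding lemmas.
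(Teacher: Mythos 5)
Your second and third steps (the identification $\mR\Gamma_h(X,\calO_\perf)\simeq \mR\Gamma_\Nis(X,\calO_\perf)\simeq \colim_F \mR\Gamma_\Nis(X,\calO)\simeq \calO_\perf(X)$ using affineness) coincide with the paper's argument, which reads off exactly this from \autoref{GabberTheorem}. Where you diverge is in identifying the hypercover colimit $\colim_{X_\mydot}\mR\Gamma(X_\mydot,\calO_{X_\mydot})$ with $\mR\Gamma_h(X,\calO_\perf)$: the paper does this in one stroke by Verdier's hypercovering theorem (the colimit over proper hypercovers of the levelwise derived sections of $\calO$ computes $h$-cohomology of the $h$-sheafification of $\calO$, which \autoref{GabberTheorem} identifies with $\calO_\perf$), whereas you propose to rebuild this from cohomological descent along a single fixed hypercover plus a ``cofinality'' of Frobenius twists. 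That last step is precisely where your proposal has a genuine gap, and you acknowledge it yourself: cofinality in the literal sense is false, since the Frobenius twists of one fixed proper hypercover do not refine an arbitrary proper hypercover of $X$, so they are not cofinal in the index category of the colimit. What one actually has to prove is that the comparison map $\colim_{X_\mydot}\mR\Gamma(X_\mydot,\calO)\to \mR\Gamma_h(X,\calO_\perf)$ is a quasi-isomorphism, e.g.\ by a degreewise surjectivity/injectivity argument: surjectivity uses the Frobenius twists of one hypercover, injectivity uses that a class killed in $\mR\Gamma_h$ is killed after pulling back along a high enough Frobenius twist, which is again an object of the index category. For this you must also verify (i) that the Frobenius-twisted augmented simplicial scheme is still a proper hypercover (the twisted coskeleta differ from the untwisted ones by finite universal homeomorphisms, so the properness/surjectivity conditions survive, but this needs to be said), and (ii) that the filtered colimit over Frobenius may be commuted with the totalization implicit in $\mR\Gamma(X_\mydot,-)$ and with the descent identification $\mR\Gamma_h(X,\calO_\perf)\simeq \mR\Gamma(X_\mydot,\calO_\perf)$ --- the levels $X_n$ are not affine, so this requires the first-quadrant descent spectral sequence (only a finite skeleton contributes in each cohomological degree), and also uses $\mR\Gamma_h(X_n,\calO_\perf)\simeq\mR\Gamma_\Nis(X_n,\calO_\perf)$ from \autoref{GabberTheorem} at each level.

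None of these points is fatal --- the argument can be completed along the lines you sketch --- but as written the central identification is asserted rather than proved, and the assertion is phrased as a cofinality statement that is not the correct one. The most economical repair is the paper's: invoke Verdier's hypercovering theorem directly to identify the left-hand side with $\mR\Gamma_h(X,\calO_\perf)$, and then conclude with the chain of identifications from \autoref{GabberTheorem} and the vanishing of higher Nisnevich cohomology of quasi-coherent sheaves on the affine $X$, exactly as in your first step.
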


\begin{remark}
When the base scheme $S$ is the spectrum of a field, by de Jong's alterations theorem, the colimit in Corollary \ref{thm.GabberIsomorphism} can be taken over all proper hypercovers with each $X_n$ smooth.
\end{remark}

\begin{proof}
	By Verdier's hypercovering theorem, the left hand side is identified with $\mR\Gamma_h(X,\calO_\perf)$. By Theorem \ref{GabberTheorem}, this is identified with $\mR\Gamma_\Nis(X,\calO_\perf)$. If $X$ is affine, then the higher Nisnevich (or even fppf) cohomology of quasicoherent sheaves vanishes, so this complex identified with $\calO_\perf(X)$.
\end{proof}

\section{Du Bois versus $F$-injective singularities}

Our goal in this section is first to use the methods developed in \cite{MustataSrinivasOrdinary} to prove a relationship between $F$-injective and Du~Bois singularities.
\begin{conjecture}\label{DB conj}
Let $X$ be a reduced scheme of finite type over a field of characteristic zero.
Then $X$ has Du Bois singularities if and only if $X$ is of dense $F$-injective type.
\end{conjecture}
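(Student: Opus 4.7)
The plan is to prove the two implications separately. The direction ``dense $F$-injective type $\Rightarrow$ Du Bois'' is already established by the second author in \cite{SchwedeFInjectiveAreDuBois}, so no further argument is required there; all the substantive content lies in the reverse direction. My plan for ``Du Bois $\Rightarrow$ dense $F$-injective type'' is to combine the unified cohomological characterization of Theorem C with an application of the \mustata{}--Srinivas surjectivity theorem to a smooth proper hypercover, thereby reducing the problem to \autoref{MS conj}.

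Concretely, given a Du Bois scheme $X$ over a characteristic-zero field $k$, I would fix a smooth proper hypercover $\pi_{\mydot}\colon X_{\mydot} \to X$ and spread $X$, $X_{\mydot}$, and $\pi_{\mydot}$ out to models over a finitely generated $\bZ$-subalgebra $A \subset k$. By the characteristic-zero half of Theorem C, the Du Bois hypothesis is equivalent to the injectivity (in fact, by \cite{KovacsSchwedeDBDeforms}, the bijectivity) of the natural map
\[
H^i_z(\O_X) \to \bH^i_z(\myR (\pi_{\mydot})_* \O_{X_{\mydot}})
\]
for every closed point $z$ and every $i$. After shrinking $\Spec A$, this map remains an isomorphism on every geometric fiber, so for every closed point $s \in \Spec A$ the analogous map for $X_s$ and its hypercover $X_{\mydot, s}$ is injective. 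Next, applying \autoref{thm.MustataSrinivasSurjectivityRestated} simultaneously to each level of a suitable smooth proper hypercover, with boundary given by the inverse image of the singular locus (and using the finitely-many-varieties version \autoref{rem.FinitelyManySmoothIsOk} of MS), one obtains a dense set $S \subseteq \Spec A$ on which the trace maps at the hypercover level are surjective. By Grothendieck duality this surjectivity translates into the injectivity of Frobenius on $\bH^i_z(\myR(\pi_{\mydot,s})_* \O_{X_{\mydot,s}})$; combined with the injectivity of the hypercover map above and the characteristic-$p$ half of Theorem C (which rests on Gabber's \autoref{GabberTheorem}), it then follows that Frobenius acts injectively on $H^i_z(\O_{X_s})$ for every $s \in S$, so that $X_s$ is $F$-injective.

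The main obstacle is plainly \autoref{MS conj} itself, which is widely open in this generality. Indeed, in light of Theorem B (\autoref{thm.DuBoisImpliesFInjectiveType}), \autoref{DB conj} is essentially equivalent to MS, so any unconditional resolution must pass through weak ordinarity; no shortcut is to be expected. Unconditional partial progress can nonetheless be extracted from classes where MS is known or accessible: abelian varieties and curves via classical work of Serre, varieties dominated by ordinary ones, and certain Calabi--Yau or $K3$-type situations where ordinarity on a dense set of primes follows from arithmetic inputs; via the argument above, each such class of smooth projective pieces immediately yields an unconditional special case of the forward direction of \autoref{DB conj} for Du Bois schemes admitting hypercovers built from those pieces. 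A secondary technical point is uniformity: the MS input must be applied simultaneously to all levels of the hypercover and in all cohomological degrees, which is handled by \autoref{rem.FinitelyManySmoothIsOk} but requires careful book-keeping when $X$ fails to be irreducible or when the hypercover involves many components of differing dimension.
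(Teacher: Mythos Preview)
Note first that \autoref{DB conj} is stated as a conjecture; the paper does not prove it outright but shows in \autoref{thm.DuBoisImpliesFInjectiveType} that it is equivalent to \autoref{MS conj}. Your proposal is, in effect, an attempt at the implication ``\autoref{MS conj} $\Rightarrow$ (Du Bois $\Rightarrow$ dense $F$-injective type)'' of that theorem, and you correctly cite \cite{SchwedeFInjectiveAreDuBois} for the other half of \autoref{DB conj}. But the route you take for this implication is genuinely different from the paper's, and as written it has a real gap.

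The paper does not use hypercovers or Theorem~C for this implication at all. It embeds the Du Bois scheme $Y$ locally into a smooth $T$, takes a single log resolution $\pi\colon X\to T$ with $E=(\pi^{-1}Y)_{\red}$ simple normal crossings, and first proves \autoref{theorem.SurjectsAllCohomologySNC}, a strengthening of \autoref{thm.MustataSrinivasSurjectivityRestated} to all higher direct images: assuming \autoref{MS conj}, the trace $R^i\pi_{s*}F^e_*\omega_{X_s}(E_s)\to R^i\pi_{s*}\omega_{X_s}(E_s)$ surjects for \emph{every} $i\ge 0$. Combined with Grauert--Riemenschneider vanishing this forces $R^i\pi_{s*}F^e_*\omega_{E_s}\to R^i\pi_{s*}\omega_{E_s}$ to surject; the Du Bois hypothesis identifies the target with $\myH^{i-\dim E}\omega_{Y_s}^{\mydot}$, and local duality then gives $F$-injectivity of $Y_s$. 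The SNC divisor $E$ inside a smooth ambient $X$ is exactly the structure that makes the \mustata{}--Srinivas machinery applicable and the duality step clean.

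Your hypercover approach lacks this structure. The levels $X_n$ of a general smooth proper hypercover carry no canonical SNC boundary---``the inverse image of the singular locus'' need not even be a divisor, let alone SNC---so \autoref{thm.MustataSrinivasSurjectivityRestated} does not apply to the maps $\pi_n\colon X_n\to X$ as you propose. More fundamentally, once you spread out the Du Bois quasi-isomorphism $\O_X\simeq \myR(\pi_{\mydot})_*\O_{X_{\mydot}}$, the comparison map $H^i_z(\O_{X_s})\to\bH^i_z(\myR(\pi_{\mydot,s})_*\O_{X_{\mydot,s}})$ is already an \emph{isomorphism}, so Frobenius injectivity on the target is literally the same question as on the source; the hypercover gives no leverage, and the appeal to Theorem~C is superfluous. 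The sentence ``by Grothendieck duality this surjectivity translates into the injectivity of Frobenius on $\bH^i_z(\myR(\pi_{\mydot,s})_*\O_{X_{\mydot,s}})$'' hides the whole difficulty: there is no off-the-shelf duality for the simplicial pushforward that would convert level-wise trace surjectivities into a statement about the totalized local hypercohomology, and you supply no spectral-sequence argument to bridge that gap. The paper's embedded-resolution approach circumvents all of this by working with a single smooth $X$ and a single SNC divisor $E$, to which the surjectivity theorem applies directly.
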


Indeed, we will show that:

\begin{theorem}
\label{thm.DuBoisImpliesFInjectiveType}
Conjecture \ref{MS conj} holds if and only if Conjecture \ref{DB conj} holds as well.
\end{theorem}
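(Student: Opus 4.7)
The plan is to exhibit both implications, with the key observation that only one direction of Conjecture~\ref{DB conj}---namely Du Bois $\Rightarrow$ dense $F$-injective type---is really in play, since the reverse implication is unconditionally known from \cite{SchwedeFInjectiveAreDuBois}. So I treat \autoref{thm.DuBoisImpliesFInjectiveType} as the equivalence of MS conj with the one-directional statement ``Du Bois $\Rightarrow$ dense $F$-injective type.''

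For $(\ref{MS conj}) \Rightarrow (\ref{DB conj})$: Given Du Bois $X$, I will embed $X \hookrightarrow T$ into a smooth ambient scheme, take a log resolution $\pi \colon \tld T \to T$ of $(T,X)$ with $E=(\pi^{-1}X)_\red$ a reduced SNC divisor, and use the Du Bois characterization $\O_X \xrightarrow{\sim} \myR\pi_* \O_E$. Grothendieck duality (using that $E$ is Gorenstein, being Cartier in the smooth $\tld T$) then yields $\myR\pi_* \omega_E^{\mydot} \xrightarrow{\sim} \omega_X^{\mydot}$. Spreading everything out over a finitely generated $\bZ$-subalgebra $A \subset k$ and passing to closed fibres, I will verify $F$-injectivity of $X_s$ via local duality, which converts it to Frobenius surjectivity on each cohomology sheaf $\myH^{-i}(\omega_{X_s}^{\mydot}) \cong \myH^{-i}(\myR\pi_* \omega_{E_s}^{\mydot})$. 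Writing $\omega_{E_s}^{\mydot}$ in terms of the SNC stratification of $E_s$ (whose pieces are built from $\omega_{\tld T_s}(E_s)$ together with its restrictions to smooth intersections of components) and invoking \autoref{thm.MustataSrinivasSurjectivityRestated} on each stratum, I expect to assemble, by an induction/spectral sequence on the number of components, the required Frobenius surjectivity.

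For $(\ref{DB conj}) \Rightarrow (\ref{MS conj})$: Given a smooth projective $V$ of dimension $n$ with a very ample $L$, I will use \autoref{lem.HighAffineConeOverDuBoisIsDuBois} to find $m \gg 0$ such that the affine cone $Y = \Spec R(V, mL)$ is Du Bois. The hypothesis then supplies a Zariski-dense set $S \subseteq \Spec A$ of closed points at which $Y_s$ is $F$-injective, so in particular Frobenius acts injectively on the top local cohomology at the vertex $v_s$:
\[
H^{n+1}_{v_s}(\O_{Y_s}) \;\cong\; \bigoplus_{j \in \bZ} H^n\bigl(V_s,\O_{V_s}(jmL_s)\bigr).
\]
Since the absolute Frobenius on $R(V_s,mL_s)$ carries the degree-$j$ summand into the degree-$jp$ summand, it preserves the degree-$0$ summand $H^n(V_s, \O_{V_s})$, on which it therefore acts injectively. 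Finite-dimensionality over the perfect residue field $\kappa(s)$ promotes injectivity of this $p$-linear endomorphism to bijectivity, yielding \autoref{MS conj}. The finitely-many-varieties refinement of Remark~\ref{rem.FinitelyManySmoothIsOk} will be handled by applying the cone construction to each variety and choosing a common $m$.

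The technically subtle step will be the forward direction: \autoref{thm.MustataSrinivasSurjectivityRestated} supplies Frobenius surjectivity only after $\pi_*$ of the single sheaf $\omega_{\tld T_s}(E_s)$, whereas $F$-injectivity of $X_s$ demands the analogous surjectivity on every cohomology sheaf of $\myR\pi_*\omega_{E_s}^{\mydot}$. Bridging this gap will require careful bookkeeping over the SNC stratification of $E_s$ together with Grothendieck duality, in the spirit of the arguments of \cite{MustataSrinivasOrdinary} for the smooth case. By contrast, the backward direction should be routine once \autoref{lem.HighAffineConeOverDuBoisIsDuBois} is in hand, because the graded structure of local cohomology on the cone makes the extraction of the MS conjecture from $F$-injectivity essentially formal.
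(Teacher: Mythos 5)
Your backward direction ((\autoref{DB conj}) $\Rightarrow$ (\autoref{MS conj})) is correct and is essentially the paper's own argument: the cone from \autoref{lem.HighAffineConeOverDuBoisIsDuBois}, $F$-injectivity at the vertex, the graded decomposition of $H^{n+1}_{v_s}(\O_{Y_s})$, Frobenius preserving the degree-zero piece $H^n(V_s,\O_{V_s})$, and injectivity of a $p$-linear map on a finite-dimensional space over a perfect field upgrading to bijectivity. Your observation that only the direction ``Du Bois $\Rightarrow$ dense $F$-injective type'' is at stake also matches the paper.

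The forward direction, however, has a genuine gap, and it is exactly the step you flag as subtle. After duality, what you must produce is surjectivity of the trace map on \emph{all} higher direct images, $R^i {\pi_s}_* F^e_* \omega_{E_s} \to R^i {\pi_s}_* \omega_{E_s}$ for every $i \geq 0$, whereas \autoref{thm.MustataSrinivasSurjectivityRestated} only gives the case $i=0$ for ${\pi_s}_* \omega_{X_s}(E_s)$. Your proposed fix --- invoking \autoref{thm.MustataSrinivasSurjectivityRestated} stratum-by-stratum on the SNC divisor and assembling via a spectral sequence --- does not close this gap for two reasons. First, the strata $E_{I'}$ are in general not contracted by $\pi$, so Grauert--Riemenschneider vanishing does not reduce their contribution to the $i=0$ case: the statement you would need on each stratum is again about higher direct images $R^i{\pi_s}_*\omega_{(E_{I'})_s}$, i.e., the very thing being proved. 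Second, the cohomological input that makes any such statement provable is Frobenius bijectivity on \emph{all} cohomology groups $H^i(\cdot,\O)$ of the relevant smooth projective varieties, while \autoref{MS conj} only provides the top degree $H^n$; upgrading to all degrees requires its own induction with sufficiently positive smooth ample hypersurface sections (the paper's \autoref{lemma.every j}), which your plan never supplies. The paper bridges the gap by proving \autoref{lemma.every j}, then Frobenius bijectivity on $H^j(E_s,\O_{E_s})$ for SNC varieties (\autoref{lemma.MS lemma5.6}) and on $H^i(X_s,\O_{X_s}(-E_s))$ (\autoref{lem.SemiSimpleFrobMinusSNC}), and finally deducing the higher-direct-image surjectivity (\autoref{theorem.SurjectsAllCohomologySNC}) by twisting with a sufficiently ample $\sL$ on $T$, a Bertini choice of $D'\in|\sL|$ keeping $E+\pi^*D'$ SNC, Serre duality, and a spectral-sequence/global-generation argument; this is then combined with $R^i{\pi_s}_*\omega_{X_s}=0$ to pass from $\omega_{X_s}(E_s)$ to $\omega_{E_s}$. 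Without this theorem or an equivalent substitute, your forward implication remains a plan rather than a proof.
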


Before proving this, we must first show that Conjecture \autoref{MS conj} actually implies that Frobenius acts bijectively on all cohomology groups of the structure sheaf.  We then use this to generalize Theorem \autoref{thm.MustataSrinivasSurjectivityRestated}.

\begin{lemma}\label{lemma.every j}
Suppose that Conjecture \autoref{MS conj} holds.  Let $V$ be an $n$-dimensional smooth projective variety over a field $k$ of characteristic zero.
Given a model of $V$ over a finitely generated $\Z$-subalgebra $A$ of $k$, there exists a Zariski-dense set of closed points $S  \subseteq \Spec A$ such that the action induced by Frobenius on $H^i(V_s, \mathcal{O}_{V_s})$ is bijective for every $s \in S$ and $0 \leq i \leq n$.
\end{lemma}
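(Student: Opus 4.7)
The plan is to induct on $n = \dim V$, using \autoref{MS conj} directly for the top degree $i = n$ and a Lefschetz-type reduction to a smooth hyperplane section to handle $i < n$.

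The base case $n = 0$ is immediate since the residue fields $\kappa(s)$ at closed points of $\Spec A$ are finite and therefore perfect. For the inductive step, fix $V$ of dimension $n$ and assume the statement for every smooth projective variety of dimension strictly less than $n$. The case $i = n$ is \autoref{MS conj} applied to $V$. For $i < n$, I will pick a very ample line bundle $L$ on $V$ and an integer $m \gg 0$ large enough that $\omega_V \otimes L^{\otimes m}$ is ample, then choose a general smooth divisor $W \in |mL|$ via Bertini (using $\Char k = 0$). From the short exact sequence
\[
0 \to \O_V(-W) \to \O_V \to \O_W \to 0
\]
together with the vanishing $H^j(V, \O_V(-W)) = 0$ for $j < n$---a consequence of Serre duality and Kodaira vanishing applied to the ample bundle $\omega_V \otimes \O_V(W)$---one extracts isomorphisms $H^i(V, \O_V) \cong H^i(W, \O_W)$ for $i \leq n-2$ and an injection $H^{n-1}(V, \O_V) \hookrightarrow H^{n-1}(W, \O_W)$.

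Next I would spread $V$, $W$, and the vanishing above out over $\Spec A$; by generic flatness, Bertini and cohomology and base change, after shrinking $A$ the subschemes $W_s$ are smooth and the above isomorphisms and injection persist on the fibers $V_s$ and $W_s$ for $s$ in a dense open. These maps are Frobenius equivariant because they are induced by the natural surjection $\O_{V_s} \twoheadrightarrow \O_{W_s}$, which manifestly commutes with Frobenius. Applying the inductive hypothesis to $W$, and combining it with \autoref{MS conj} for $V$ via \autoref{rem.FinitelyManySmoothIsOk}, yields a single Zariski-dense set of closed points on which Frobenius acts bijectively on every $H^j(W_s, \O_{W_s})$ with $0 \leq j \leq n-1$ and on $H^n(V_s, \O_{V_s})$ simultaneously.

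For $i \leq n-2$ the conclusion is immediate from the isomorphism. The only step that needs an extra input is $i = n-1$: the image of $H^{n-1}(V_s, \O_{V_s})$ in $H^{n-1}(W_s, \O_{W_s})$ is a Frobenius-stable subspace of a finite-dimensional vector space over the perfect field $\kappa(s)$, and I would invoke the elementary $p$-linear algebra fact that a bijective $p$-linear endomorphism restricts to a bijective $p$-linear endomorphism on any Frobenius-stable subspace (injectivity is automatic, and an injective $p$-linear endomorphism of a finite-dimensional vector space over a perfect field is surjective by a direct matrix/Frobenius-twist argument). I expect the main source of friction to be bookkeeping---intersecting the finitely many dense opens coming from Bertini, spreading out, base change, and the inductive hypotheses into one---rather than any conceptual obstruction; in particular no semistable reduction is needed.
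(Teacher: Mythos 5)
Your proposal is correct and follows essentially the same route as the paper: induct on dimension, cut by a sufficiently positive smooth divisor so that $H^i(V,\O_V(-W))=0$ for $i\le n-1$, deduce isomorphisms in degrees $\le n-2$ and an injection in degree $n-1$ compatible with Frobenius on the fibers, and combine the inductive hypothesis with \autoref{MS conj} via \autoref{rem.FinitelyManySmoothIsOk}, using that an injective $p$-linear endomorphism of a finite-dimensional space over a perfect field is bijective. The only cosmetic differences (invoking Kodaira vanishing rather than just choosing $D$ positive enough, and phrasing the $i=n-1$ step through the Frobenius-stable image) do not change the argument.
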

\begin{proof}
We claim that there is a finite set of varieties, such that if they simultaneously satisfy the condition of Conjecture \autoref{MS conj} for some $S \subseteq \Spec A$,  then the condition of the lemma holds for $V$.  Note that again it is harmless to assume that $k$ is algebraically closed as in Remark \autoref{rem.FinitelyManySmoothIsOk}.
We prove this claim by strong induction on the dimension $n$.
Let $D$ be a smooth ample divisor on $V$, chosen sufficiently positive so that $H^i(V, \O_V(-D))= 0$ for all $i \le n-1$. Then $H^i(V, \O_V) \to H^i(D, \O_D)$ is an isomorphism for all $i \leq n - 2$ and an injection for $i = n-1$.  Since $\dim D = n-1$, we can apply our induction hypothesis to $D$.  It immediately follows that Frobenius acts injectively (and hence bijectively) on $H^i(V_s, \O_{V_s})$ for $i \leq n-1$ for appropriate $S$ and all $s \in S$.  Of course, we may also certainly assume that Frobenius acts bijectively on $H^n(V_s, \O_{V_s})$ for all $s \in S$ by Remark \autoref{rem.FinitelyManySmoothIsOk}.
This completes the proof.
\end{proof}

We now state our promised generalization of \cite[Theorem 5.10]{MustataSrinivasOrdinary}, \ie{} of Theorem \autoref{thm.MustataSrinivasSurjectivityRestated}.

\begin{theorem}
\label{theorem.SurjectsAllCohomologySNC}
With notation as in Theorem \autoref{thm.MustataSrinivasSurjectivityRestated}, we additionally have that
\[
R^i {\pi_s}_* F^e_*\omega_{X_s}(E_s) \to R^i {\pi_s}_*\omega_{X_s}(E_s)
\]
is surjective for every $i \geq 0$.
\end{theorem}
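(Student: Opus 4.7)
The strategy is induction on the relative dimension $d = \dim X - \dim T$ of $\pi$. The base case $d = 0$ is trivial, since $\pi$ is then finite and $R^i \pi_* = 0$ for $i > 0$, while the case $i = 0$ at any $d$ is exactly Theorem~\ref{thm.MustataSrinivasSurjectivityRestated}. For the inductive step, I would fix a very $\pi$-ample line bundle $\sL$ on $X$. Ordinary Serre vanishing in characteristic zero yields an $M \gg 0$ with $R^i \pi_*(\omega_X(E) \otimes \sL^M) = 0$ for all $i > 0$. A Bertini-generic section $\sigma \in H^0(X, \sL^M)$ cuts out a smooth divisor $D \subset X$ meeting each stratum of $E$ transversally, so that $E + D$ is reduced simple normal crossings. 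Spreading $D$ out, I obtain a dense open $S_1 \subseteq \Spec A$ over which $D_s$ is smooth, $E_s + D_s$ is SNC, and $R^i \pi_{s*}\omega_{X_s}(E_s + D_s) = 0$ for every $i > 0$.

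The crucial observation ensuring uniformity in $e$ is the identity $\pi \circ F^e_X = F^e_T \circ \pi$, which gives $R^i \pi_* F^e_{X*} \simeq F^e_{T*} R^i \pi_*$ as functors on quasi-coherent sheaves. Since $F^e_{T*}$ is faithfully exact, the Serre vanishing above automatically yields $R^i \pi_{s*} F^e_* \omega_{X_s}(E_s + D_s) = 0$ for every $e \geq 1$, so a single divisor $D$ (independent of $e$) suffices for all Frobenius iterates simultaneously.

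Applying the inductive hypothesis to the projective morphism $D \to T$ with divisor $E|_D$ (of relative dimension $d-1$) gives a dense $S_2 \subseteq \Spec A$ on which the Frobenius-trace map surjects on every $R^j$ for $j \geq 0$, $e \geq 1$, $s \in S_2$. Set $S = S_1 \cap S_2$. For $s \in S$ and $i \geq 1$, the adjunction-based short exact sequence
\[ 0 \to \omega_{X_s}(E_s) \to \omega_{X_s}(E_s + D_s) \to \omega_{D_s}(E|_{D_s}) \to 0 \]
and its $F^e_*$-pushforward give a commutative ladder of long exact sequences after applying $\pi_{s*}$. The vanishing from $S_1$ collapses each row, forcing the connecting homomorphism $R^{i-1}\pi_{s*}\omega_{D_s}(E|_{D_s}) \twoheadrightarrow R^i \pi_{s*}\omega_{X_s}(E_s)$ to be surjective (and similarly with $F^e_*$). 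A diagram chase using the inductive surjectivity on $D$ then yields the desired surjectivity for $X$ at degree $i$.

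The main obstacle to overcome is precisely this uniformity in $e$: naively, Serre vanishing for $F^e_*\omega_X(E+D)$ would need to be re-established for each $e$, potentially forcing $D$ to depend on $e$ and breaking the clean induction on $D$. The base-change identity $R^i \pi_* F^e_{X*} \simeq F^e_{T*} R^i \pi_*$ resolves this issue, allowing one $D$ to handle all $e$ at once and closing the induction.
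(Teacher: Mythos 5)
Your route is genuinely different from the paper's. The paper first reduces to $X$, $T$ projective, twists by an ample $\sL$ pulled back from $T$ so that each $(R^i\pi_*\omega_X(E))\otimes\sL$ is globally generated with vanishing higher cohomology on $T$, and then deduces the sheaf-level surjectivity from surjectivity of $H^i(X_s,F_*\omega_{X_s}(E_s+E'_s))\to H^i(X_s,\omega_{X_s}(E_s+E'_s))$, which comes by Serre duality from the bijectivity of Frobenius on $H^i(X_s,\O_{X_s}(-E_s-E'_s))$ in \emph{all} degrees $i$ (\autoref{lem.SemiSimpleFrobMinusSNC}, resting on \autoref{lemma.every j}). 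You instead slice by a relatively ample divisor and reduce the $i>0$ cases to the $i=0$ case, i.e.\ to \autoref{thm.MustataSrinivasSurjectivityRestated} applied to $X\to T$ and to the successive hyperplane sections; your identity $R^i\pi_*F^e_{X*}\simeq F^e_{T*}R^i\pi_*$ is correct and does handle uniformity in $e$, the residue sequence is compatible with the trace maps, and the diagram chase with the connecting homomorphisms is sound. The benefit is that \autoref{thm.MustataSrinivasSurjectivityRestated} is used as a black box; the cost is the bookkeeping below.

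Two points need repair. First, the induction as set up fails at the base: a projective morphism with $\dim X=\dim T$ need not be finite (take $\pi$ the blowup of a point on a smooth threefold $T$ with $E$ the exceptional $\P^2$; then $R^2\pi_*\omega_X(E)\cong k$), so ``$d=0$ implies $R^i\pi_*=0$ for $i>0$'' is false. You should induct on the \emph{maximal fiber dimension} of $\pi$: that base case genuinely is the finite case, and a general member of a $\pi$-very-ample system drops the maximal fiber dimension by one. Second, ``$S=S_1\cap S_2$'' conceals that you are intersecting the dense (not open) set supplied by \autoref{thm.MustataSrinivasSurjectivityRestated} for $X\to T$ with the dense set supplied by the inductive hypothesis for $D\to T$, and an intersection of two dense sets need not be dense. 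This is repaired exactly as in \autoref{rem.FinitelyManySmoothIsOk}: each invocation produces its dense set as the locus where Frobenius acts bijectively on the cohomology of finitely many smooth projective varieties, and since your induction invokes it only finitely many times, a single dense $S$ serves the whole collection. With these corrections (and the paper's standard preliminary reduction to $T$ affine or projective so that Bertini applies to $|\sL^M|$), your argument goes through.
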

To prove this, we follow the method of \cite{MustataSrinivasOrdinary}, first proving several lemmas.

\begin{lemma}\textnormal{(\cite[Lemma 5.6]{MustataSrinivasOrdinary})}\label{lemma.MS lemma5.6}
Suppose that $E = \bigcup_{i \in I} E_i$ is a projective simple normal crossings variety over a field $k$ of characteristic zero, and that we are given a model of $E$ over a finitely generated $\Z$-subalgebra $A$ of $k$.
Assuming Conjecture \autoref{MS conj}, there exists a Zariski-dense set of closed points $S \subset \Spec A$ such that the Frobenius action $F : H^j(E_s, \O_{E_s}) \to H^j(E_s, \O_{E_s})$ is bijective for every $s \in S$ and $j \ge 0$.
\end{lemma}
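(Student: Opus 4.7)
The plan is to reduce the statement for the simple normal crossings variety $E$ to the smooth case handled by \autoref{lemma.every j}, by means of the Mayer--Vietoris / \v Cech-type spectral sequence associated to the decomposition $E=\bigcup_{i\in I} E_i$.

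First I would write down, for the SNC variety $E$, the exact complex of sheaves on $E$
\[
0\to \O_E\to \bigoplus_{i\in I} \O_{E_i}\to \bigoplus_{i<j}\O_{E_{ij}}\to \bigoplus_{i<j<k}\O_{E_{ijk}}\to\cdots
\]
where $E_J:=\bigcap_{i\in J} E_i$ for $J\subseteq I$. Because $E$ is SNC, each nonempty $E_J$ is smooth and projective, and there are only finitely many such $J$. Taking cohomology yields a convergent first quadrant spectral sequence
\[
E_1^{p,q}=\bigoplus_{|J|=p+1}H^q(E_J,\O_{E_J})\ \Longrightarrow\ H^{p+q}(E,\O_E),
\]
and the entire construction is functorial in $E$, so it is Frobenius--equivariant after passing to any model $E_A$ and specialising to $E_s$.

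Next I would apply \autoref{lemma.every j}, together with \autoref{rem.FinitelyManySmoothIsOk} (which allows us to handle finitely many smooth projective varieties at once), to the finite collection $\{E_J\}_{\emptyset\ne J\subseteq I}$ of smooth projective varieties. Choosing a model of each $E_J$ compatible with the given model of $E$ over $A$, this produces a single Zariski--dense set $S\subseteq\Spec A$ such that, for all $s\in S$, the Frobenius action on $H^q(E_{J,s},\O_{E_{J,s}})$ is bijective for every nonempty $J$ and every $q\ge 0$. Thus for every $s\in S$ the Frobenius acts bijectively on every term of the $E_1$-page of the spectral sequence for $E_s$.

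Finally, since the differentials of the spectral sequence are Frobenius--equivariant, bijectivity of Frobenius on $E_1$ passes to each subsequent page $E_r$ (kernels and cokernels of Frobenius--equivariant maps between $F$-bijective modules are $F$-bijective), and hence to $E_\infty$. A standard induction on the length of the abutment filtration on $H^j(E_s,\O_{E_s})$, using the five lemma on the short exact sequences
\[
0\to F^{p+1}H^j\to F^pH^j\to E_\infty^{p,j-p}\to 0,
\]
then shows that Frobenius is bijective on $H^j(E_s,\O_{E_s})$ for every $j\ge 0$ and every $s\in S$, as required. The only point requiring care is to verify Frobenius--equivariance of the Mayer--Vietoris spectral sequence at the level of $E_A$ and its fibres, but this is immediate from the functoriality of the \v Cech resolution with respect to the absolute Frobenius, so I do not expect a serious obstacle.
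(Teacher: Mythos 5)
Your proposal is correct and follows essentially the same route as the paper: both resolve $\O_E$ by the \v Cech-type complex of structure sheaves of the smooth intersections $E_J$, apply \autoref{lemma.every j} together with \autoref{rem.FinitelyManySmoothIsOk} to this finite collection, and transfer $F$-bijectivity to $H^j(E_s,\O_{E_s})$ by Frobenius-equivariance; the paper merely unrolls your spectral sequence by hand, splitting the acyclic complex into exact sequences via the kernels $Z^r$ and doing a descending induction with the five lemma.
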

\begin{proof}This is essentially taken from \cite[Lemma 5.6]{MustataSrinivasOrdinary}.  Without loss of generality, we may assume that for all closed points $s \in \Spec A$, $E_s$ is still simple normal crossings, in particular each $(E_i)_s$ is smooth and even more every intersection $(E_{I'})_s := \bigcap_{i \in I'} (E_i)_s$ is also smooth over $k(s)$ for all $I' \subseteq I$.
We apply Conjecture \autoref{MS conj} (and Lemma \autoref{lemma.every j}) to each component of these intersections and so assume that the Frobenius action is bijective on each of their cohomology groups for some Zariski-dense set $S \subset \Spec A$.
Fix an arbitrary $s \in S$.
We denote by $n-1$ the dimension of $E$ and form an acyclic complex
\[
C^{\mydot} : 0 \to C^0 \xrightarrow{d^0} C^1 \xrightarrow{d^1} \ldots \xrightarrow{d^{n-1}} C^n \xrightarrow{d^n} 0.
\]
where $C^r = \bigoplus_{|I'| = r} \O_{(E_{I'})_s}$ and $C^0 = \O_{E_s}$.  Note that if $|I'| = r$, then $E_{I'} = \bigcap E_i$ has dimension $n - r$ since intersecting $r$ components of $E$ results in a scheme of dimension $n-r$.
We put $Z_i = \ker d_i$ and observe that we have exact sequences compatible with Frobenius for each $r \ge 1$ and $j \ge 1$:
\begin{align*}
& H^{j-1}(X_s, C^r) \to
H^{j-1}(X_s, Z^{r+1}) \to
H^{j}(X_s, Z^{r}) \\
\to & H^{j}(X_s, C^r) \to
H^{j}(X_s, Z^{r+1}).
\end{align*}
Note that Frobenius is bijective on the $H^{j}(X_s, C^r)$ terms for any $r \ge 1$ and $j \ge 0$.  We then perform a descending induction on $r$.  When $r=n$, it is obvious that Frobenius acts bijectively on $H^{j}(X_s, Z^{r})$ for all $j \ge 0$, hence the base case is taken care of.  Thus the Frobenius acts bijectively on $H^{j}(X_s, Z^{r})$ for any $r \ge 1$ and $j \ge 0$ (say by the 5-lemma or \cite[Lemma 2.4]{MustataSrinivasOrdinary}).
Since $Z^1 = \Image{d^0} \cong \O_{E_s}$, we complete the proof.
\end{proof}

Continuing to follow \cite{MustataSrinivasOrdinary} we have the following.
\begin{lemma}\textnormal{(\cite[Corollary 5.7]{MustataSrinivasOrdinary})}
\label{lem.SemiSimpleFrobMinusSNC}
With notation as in Lemma \autoref{lemma.MS lemma5.6}, suppose that $E \subset X$ is a reduced simple normal crossings divisor in a smooth projective variety $X$.
Assuming Conjecture \autoref{MS conj}, there exists a Zariski-dense set of closed points $S \subset \Spec A$ such that
\[
F : H^i(X_s, \O_{X_s}(-E_s)) \to H^i(X_s, \O_{X_s}(-E_s))
\]
is bijective for every $s \in S$ and $i \ge 0$.
\end{lemma}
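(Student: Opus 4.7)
The plan is to squeeze the Frobenius action on $H^i(X_s,\O_{X_s}(-E_s))$ between the already-understood Frobenius actions on $H^i(X_s,\O_{X_s})$ and $H^i(E_s,\O_{E_s})$, by way of the tautological short exact sequence
\[
0 \to \O_{X_s}(-E_s) \to \O_{X_s} \to \O_{E_s} \to 0.
\]
The first thing I would check is that this sequence is Frobenius-equivariant: the absolute Frobenius on $X_s$ preserves the ideal sheaf of $E_s$ because $f \in \O_{X_s}(-E_s)$ forces $f^p \in \O_{X_s}(-pE_s) \subseteq \O_{X_s}(-E_s)$, so the Frobenius on $\O_{X_s}$ restricts to the ideal. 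The induced long exact sequence in cohomology is then compatible with Frobenius term by term.

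Next, I would apply \autoref{rem.FinitelyManySmoothIsOk} in tandem with \autoref{lemma.every j} (applied to the smooth projective $X$) and \autoref{lemma.MS lemma5.6} (applied to the simple normal crossings variety $E$) to extract a \emph{single} Zariski-dense set $S \subseteq \Spec A$ of closed points such that, for every $s \in S$ and every $i, j \geq 0$, Frobenius acts bijectively on both $H^i(X_s,\O_{X_s})$ and $H^j(E_s,\O_{E_s})$. The consolidation of two dense sets into one is precisely the ``finitely many varieties'' observation, which is also the device used in the proof of the preceding lemma.

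Finally, for each fixed $s \in S$ and each $i \geq 0$, I would feed the Frobenius-equivariant fragment
\[
H^{i-1}(E_s,\O_{E_s}) \to H^i(X_s,\O_{X_s}(-E_s)) \to H^i(X_s,\O_{X_s}) \to H^i(E_s,\O_{E_s})
\]
into the five lemma (equivalently, \cite[Lemma~2.4]{MustataSrinivasOrdinary}). Since Frobenius is bijective on each of the four outer terms by the previous step, the middle term $H^i(X_s,\O_{X_s}(-E_s))$ is forced to inherit a bijective Frobenius, as required.

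I do not foresee a genuine obstacle: the two nontrivial inputs (bijectivity of Frobenius on $\O_{X_s}$-cohomology via \autoref{lemma.every j}, and on $\O_{E_s}$-cohomology via \autoref{lemma.MS lemma5.6}) are already in hand, and the concluding diagram chase is verbatim the one used to deduce \cite[Corollary~5.7]{MustataSrinivasOrdinary} in the smooth ambient case. The only point one must watch is the simultaneity of the dense set, which is handled by \autoref{rem.FinitelyManySmoothIsOk}.
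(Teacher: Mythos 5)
Your proof is correct and is essentially identical to the paper's: the paper likewise applies the five lemma (or \cite[Lemma 2.4]{MustataSrinivasOrdinary}) to the Frobenius-equivariant long exact sequence coming from $0 \to \O_{X_s}(-E_s) \to \O_{X_s} \to \O_{E_s} \to 0$, using the bijectivity of Frobenius on $H^i(X_s,\O_{X_s})$ and $H^i(E_s,\O_{E_s})$ supplied by \autoref{lemma.every j} and \autoref{lemma.MS lemma5.6}. Your extra remarks on Frobenius preserving the ideal sheaf and on consolidating the dense sets via \autoref{rem.FinitelyManySmoothIsOk} are correct details the paper leaves implicit.
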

\begin{proof}
This immediately follows from an application of the five lemma (or \cite[Lemma 2.4]{MustataSrinivasOrdinary}) to the cohomology sequence of the short exact sequence $0 \to \O_{X_s}(-E_s) \to \O_{X_s} \to \O_{E_s} \to 0$ and the fact that the Frobenius actions on $H^i(X_s, \O_{X_s})$ and $H^i(E_s, \O_{E_s})$ are bijective.
\end{proof}

We now come to the proof of Theorem \autoref{theorem.SurjectsAllCohomologySNC}.

\begin{proof}[Proof of Theorem \autoref{theorem.SurjectsAllCohomologySNC}]
The proof is essentially the same as that of \cite[Theorem 5.10]{MustataSrinivasOrdinary}.  Note that as observed in Remark \autoref{rem.FinitelyManySmoothIsOk} we may assume that $k$ is algebraically closed.
By the same argument as that of \cite[Theorem 5.10]{MustataSrinivasOrdinary}, we can reduce to the case where $X$ and $T$ are projective.
We choose a sufficiently ample $\sL$ on $T$ such that each of the direct image sheaves $(R^i \pi_*\omega_{X}(E)) \tensor \sL$ is globally generated, 
and such that the higher cohomology $H^j(T, (R^i \pi_* \omega_X(E)) \tensor \sL) = 0$ for all $j \ge 1$.
By Bertini's theorem, we then choose a general section $D' \in |\sL|$ so that $E+\pi^*D'$ is a reduced simple normal crossing divisor.
Put $E'=\pi^*D'$ and observe that we have surjections
\[
H^i(X_s, F_* \omega_{X_s}(E_s + E'_s)) \to H^i(X_s, \omega_{X_s}(E_s + E'_s))
\]
by Serre duality and Lemma \autoref{lem.SemiSimpleFrobMinusSNC} (applied to $E + E'$) for an appropriate Zariski-dense set of closed points $S \subseteq \Spec A$ and all $s \in S$.  Of course, these maps factor through surjections:
\[
\xymatrix{
H^i(X_s, F_* \omega_{X_s}(E_s + p E'_s)) \ar@{-}[d]^{\sim} \ar@{->>}[r] & H^i(X_s, \omega_{X_s}(E_s + E'_s)) \ar@{-}[d]^{\sim} \\
H^i(X_s, (F_* \omega_{X_s}(E_s)) \tensor \pi_s^*\sL_s ) \ar@{->>}[r] & H^i(X_s, \omega_{X_s}(E_s) \tensor \pi_s^* \sL_s).
}
\]
Since we may assume that $H^j(T_s, (R^i {\pi_s}_* \omega_{X_s}(E_s)) \tensor \sL_s) = 0$ for every closed point $s \in \Spec A$ and $j \ge 1$,
by a simple spectral sequence argument, 
the following maps surject for all $s \in S$:
\[
H^0(T_s, (F_* R^i {\pi_s}_* \omega_{X_s}(E_s)) \tensor \sL_s) \to H^0(T_s, (R^i {\pi_s}
_* \omega_{X_s}(E_s)) \tensor \sL_s).
\]
By the global generation we claimed in characteristic zero, which is preserved on an open dense set of points in $\Spec A$, the proof is complete.
\end{proof}

We can now prove our theorem relating $F$-injective and Du Bois singularities after reduction to characteristic $p > 0$.
\begin{proof}[Proof of Theorem \autoref{thm.DuBoisImpliesFInjectiveType}]
We first show that assuming Conjecture \autoref{MS conj}, if $Y$ has Du Bois singularities, then $Y$ has dense $F$-injective type.
Indeed, suppose that $Y$ is a reduced scheme over $k$ in characteristic zero with Du Bois singularities.  Working locally if necessary, embed $Y$ inside a smooth variety $T$ and let $\pi : X \to T$ be a projective log resolution of $(T, Y)$ with $E = (\pi^{-1} Y)_{\red}$.  We spread this data out to a model over a finitely generated $\bZ$-subalgebra $A$ of $k$.
We may restrict our attention to $s \in \Spec A$ such that $R^i {\pi_s}_* \omega_{X_s} = 0$ for all $i > 0$ by Grauert--Riemenschneider vanishing \cite{GRVanishing}.
Note that then $R^i {\pi_s}_* F^e_*\omega_{X_s}(E_s) \rightarrow R^i {\pi_s}_* F^e_* \omega_{E_s}$ surjects for all $i \geq 0$.
By Theorem \autoref{theorem.SurjectsAllCohomologySNC}, there exists a Zariski-dense set of closed points $S  \subseteq \Spec A$ such that
\begin{equation}
\label{eq.SurjectionImplies}
R^i {\pi_s}_* F^e_* \omega_{E_s} \to R^i {\pi_s}_* \omega_{E_s}
 \end{equation}
 surjects for all $i \geq 0$ and $s \in S$.  However, since $Y$ is Du Bois, we have
$R^i {\pi_s}_* \omega_{E_s} \cong \myH^{i-\dim E} \omega_{Y_s}^{\mydot}.$  This combined with the surjection of \eqref{eq.SurjectionImplies} implies that $Y_s$ is $F$-injective by local duality.

Now we assume that every variety with Du Bois singularities has dense $F$-injective type and we  prove Conjecture \autoref{MS conj}.  Let $X$ be a smooth projective variety over a field of characteristic zero.
Fix an ample divisor $B$ on $X$ and consider the affine cone $Y = \Spec R(X, mB)$ for $m \gg 0$.
We may restrict our attention to $s \in \Spec A$ such that $R(X, mB)_s=R(X_s, mB_s)$.
Note that $Y$ is Du Bois by Lemma \autoref{lem.HighAffineConeOverDuBoisIsDuBois}.
It follows by assumption that $Y_s$ is $F$-injective for all $s$ in some dense set of maximal ideals $S \subseteq \Spec A$.
In particular, the Frobenius map
\begin{equation}
\label{eq.LocalCohomologyInjects}
F : H^{n+1}_{y_s}(\O_{Y_s}) \to H^{n+1}_{y_s}(\O_{Y_s})
 \end{equation}
injects where $y$ is the cone point.  Since $R(X, mB)_s=R(X_s, mB_s)$ is graded, \eqref{eq.LocalCohomologyInjects} injects in degree zero.
However, this is nothing but requiring that $F : H^n(X_s, \O_{X_s}) \to H^n(X_s, \O_{X_s})$ injects, and hence bijects, which is what we wanted to prove.
\end{proof}

\begin{remark}
It is worth pointing out that a weaker version of Conjecture \autoref{DB conj}, \emph{only} for isolated standard graded singularities, is all that is needed for the ($\Leftarrow$) direction of Theorem \autoref{thm.DuBoisImpliesFInjectiveType}.  This follows from the proof above since the only singularities we consider are cones over smooth varieties.
\end{remark}

We now come to our other main result on Du~Bois and $F$-injective singularities.  We utilize the work of Gabber from Section \autoref{Sec.Gabber}.

\begin{theorem}
\label{thm.DuBoisFinjectiveUnified}
Suppose that $X$ is a reduced scheme of finite type over a field $k$.  Then for any proper hypercover with smooth terms $\pi_{\mydot} : X_{\mydot} \to X$, consider the canonical map
\begin{equation}
\label{eq.LocalCohomologyDuBoisThmC2}
H^i_{z}(\O_{X,z}) \to \bH^i_z((\myR (\pi_{\mydot})_* \O_{X_{\mydot}})_z)
\end{equation}
for each $i$ and each point $z \in X$.
\begin{itemize}
\item{} Suppose $k$ is of characteristic zero, then $X$ has Du Bois singularities if and only if $\eqref{eq.LocalCohomologyDuBoisThmC2}$ injects for each $\pi_{\mydot}$, $i$ and $z$.
\item{} Suppose $k$ is of characteristic $p > 0$ and is $F$-finite, then $X$ has $F$-injective singularities if and only if  $\eqref{eq.LocalCohomologyDuBoisThmC2}$ injects for each $\pi_{\mydot}$, $i$ and $z$.
\end{itemize}
\end{theorem}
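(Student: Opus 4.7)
The plan is to prove the two characteristic cases independently, since they rest on entirely different technology: Deligne--Du Bois theory for characteristic zero, and Gabber's result from \autoref{Sec.Gabber} for characteristic $p$.

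For characteristic zero, the argument is essentially a reformulation of the definition. By Deligne--Du Bois (recalled in the definition of Du~Bois singularities in the paper), for any smooth proper hypercover $\pi_\mydot$ one has a canonical quasi-isomorphism $\myR(\pi_\mydot)_*\O_{X_\mydot} \simeq \DuBois{X}$ that is independent of the choice of $\pi_\mydot$. Consequently the map \autoref{eq.LocalCohomologyDuBoisThmC2} is canonically identified with the natural map $H^i_z(\O_{X,z}) \to \bH^i_z((\DuBois{X})_z)$. Now $X$ is Du Bois iff $\O_X \to \DuBois{X}$ is a quasi-isomorphism, which is the same as the vanishing of the cone, which (by passage to stalks) is equivalent to the cone having vanishing local cohomology at every $z$ in every degree. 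Using the long exact sequence and the fact that $H^i_z(\O_{X,z}) \to \bH^i_z((\DuBois{X})_z)$ is \emph{always surjective} by Kov\'acs--Schwede (cited in the paper after Theorem C), injectivity for all $i,z,\pi_\mydot$ is equivalent to isomorphism, and hence to Du~Bois. This closes the characteristic zero case.

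For characteristic $p$, the goal is to reduce $F$-injectivity to Gabber's identification. Working locally we may assume $X$ is an affine neighborhood of $z$. The key step is to upgrade \autoref{thm.GabberIsomorphism} from global sections to a sheaf-level statement: the argument of \autoref{Sec.Gabber} actually produces, in the derived category of Nisnevich sheaves on $X$, a quasi-isomorphism
\[
\colim_{\pi_\mydot} \myR(\pi_\mydot)_*\O_{X_\mydot} \;\simeq\; \O_{X,\perf},
\]
the colimit running over smooth proper hypercovers (allowed by de Jong's alterations, as pointed out in the remark after \autoref{thm.GabberIsomorphism}), and with $\O_{X,\perf} = \colim_F \O_X$ the perfection. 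Since filtered colimits commute with local cohomology on quasi-coherent sheaves, this yields
\[
\colim_{\pi_\mydot} \bH^i_z\bigl((\myR(\pi_\mydot)_*\O_{X_\mydot})_z\bigr) \;\simeq\; \colim_F H^i_z(\O_{X,z}),
\]
where the right-hand colimit is along the iterated Frobenius action. By the $F$-finiteness hypothesis, $X$ is $F$-injective iff Frobenius acts injectively on each $H^i_z(\O_{X,z})$, iff the natural map $H^i_z(\O_{X,z}) \to \colim_F H^i_z(\O_{X,z})$ has trivial kernel.

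Given this setup, the theorem will follow from an elementary diagram chase through the factorization
\[
H^i_z(\O_{X,z}) \;\longrightarrow\; \bH^i_z\bigl((\myR(\pi_\mydot)_*\O_{X_\mydot})_z\bigr) \;\longrightarrow\; \colim_F H^i_z(\O_{X,z}).
\]
If Frobenius is injective then the composite is injective, hence the first factor is injective for every $\pi_\mydot$; conversely, if the first factor is injective for every $\pi_\mydot$, then any element killed by the composite is already killed at some finite stage in the filtered colimit, hence zero, so the composite is injective and $X$ is $F$-injective. The main obstacle I anticipate is carefully extracting the sheaf-theoretic form of Gabber's identification (the statement recorded in the paper is for global sections, but the proof manifestly produces the sheaf-level version) and verifying that local cohomology at $z$ commutes with the filtered colimit over smooth proper hypercovers; once this is checked, the rest of the argument is formal.
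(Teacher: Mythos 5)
Your proposal is correct in outline, and the characteristic-zero half and the forward implication in characteristic $p$ are essentially the paper's argument (Du~Bois via Kov\'acs' observation and the Kov\'acs--Schwede surjectivity; $F$-injectivity giving injectivity into $H^i_z(\O_{X,z}^{1/p^\infty})$ and then factoring through $\bH^i_z((\myR(\pi_{\mydot})_*\O_{X_{\mydot}})_z)$ via \autoref{thm.GabberIsomorphism}). Where you genuinely diverge is the converse in characteristic $p$: the paper does not run the filtered-colimit argument at all. It simply observes that if $\pi_{\mydot}\colon X_{\mydot}\to X$ is a smooth proper hypercover, then so is $\mu_{\mydot}=F\circ\pi_{\mydot}$, and the canonical map for $\mu_{\mydot}$ factors as $H^i_z(\O_{X,z})\to H^i_z(F_*\O_{X,z})\to \bH^i_z((\myR(\mu_{\mydot})_*\O_{X_{\mydot}})_z)$; injectivity of the composite forces Frobenius to be injective. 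This costs nothing: no localized form of Gabber's isomorphism, no cofinality of smooth hypercovers via de~Jong, no bookkeeping about which stage of the colimit kills a class. Your route instead leans on upgrading \autoref{thm.GabberIsomorphism} to an identification of $\colim_{\pi_{\mydot}}\bH^i_z$ with $\colim_F H^i_z(\O_{X,z})$, which is plausible but is real extra work (you flag it yourself), and it buys you nothing beyond what the Frobenius-composition trick already gives. Two cautions on your write-up: the phrase ``killed at some finite stage in the filtered colimit, hence zero'' must mean killed at some \emph{smooth proper hypercover} stage (so that your injectivity hypothesis applies there, using de~Jong cofinality); if read as ``killed at a finite stage of the Frobenius colimit, i.e.\ $F^e x=0$, hence $x=0$,'' the step is circular. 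Also note that even your forward direction, like the paper's, only needs the existence of the factorization through the perfection at the level of local cohomology, not the full colimit isomorphism.
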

\begin{proof}
There is nothing to prove in characteristic zero by \cite{KovacsDuBoisLC1}.  In characteristic $p > 0$, suppose first that $X$ has $F$-injective singularities.  It follows easily that for every point $z \in X$ and every $i \geq 0$ that
\[
H^i_z(\O_{X,z}) \hookrightarrow \lim_{e \shortrightarrow  \infty}H^i_z(\O_{X,z}^{1/p^e}) \cong H^i_z\big(\lim_{e \shortrightarrow \infty} \O_{X,z}^{1/p^e}\big) = H^i_z(\O_{X,z}^{1/p^{\infty}})
\]
injects.  Now, choose $\pi_{\mydot} : X_{\mydot} \to X$ a proper hypercover with smooth terms of $X$.
Then for any $i \geq 0$ and point $z \in X$, we obtain a factorization of the above map
\[
H^i_z(\O_{X,z}) \to \bH^i_z((\myR (\pi_{\mydot})_* \O_{X_{\mydot}})_z) \to H^i_z(\O_{X,z}^{1/p^{\infty}})
\]
by Theorem \autoref{thm.GabberIsomorphism}. Hence \eqref{eq.LocalCohomologyDuBoisThmC2} injects as claimed.

The converse is even easier, note that if $\pi_{\mydot} : X_{\mydot} \to X$ is a proper hypercover with smooth terms, then $\mu_{\mydot} : X_{\mydot} \xrightarrow{\pi_{\mydot}} X \xrightarrow{F} X$ is also a proper hypercover with smooth terms.  Hence if $H^i_{z}(\O_{X,z}) \to \bH^i_z( (\myR (\mu_{\mydot})_* \O_{X_{\mydot}})_z)$ injects, so does $H^i_z(\O_{X,z}) \to H^i_z(F_* \O_{X,z})$ which proves that $X$ is $F$-injective.
\end{proof}

\section{Comparison between multiplier ideals and test ideals}

Before proving Theorem A from the introduction, we prove a lemma which is essentially the heart of the proof.
First we make a small definition.

\begin{definition}
Suppose that $(M, \phi : F^e_* (M \otimes \sL) \to M)$ is a Cartier module on a scheme $X$.  Further suppose that $D$ is an effective Cartier divisor on $X$.
Since $D$ is effective, we have an induced map
 \[
 \phi_D : F^e_* (M \otimes \sL \otimes \O_X(-(p^e - 1)D)) \subseteq F^e_* (M \otimes \sL) \xrightarrow{\phi} M.
 \]
Setting $\sN = \sL \otimes \O_X( -(p^e-1)D)$ we see that $(M, \phi_D : F^e_* (M \otimes \sN) \to M)$ is also a Cartier module.  Then we use $\sigma(M, \phi, D) = \sigma(M, \phi_D)$ to denote the maximal $F$-pure Cartier submodule of $(M, \phi_D)$ as in Definition \autoref{def.SigmaCartier}.
\end{definition}

\begin{lemma}
\label{lem.KeyCase}
Suppose that $X = \Spec R$ is a normal affine variety over a field $k$ of characteristic zero, $f \in R$ is a nonzero element with $D = \Div(f)$ and $a_1 \ge 1$ is an integer.
Suppose also that we are given a model of $(X, f)$ over a finitely generated $\bZ$-subalgebra $A$ of $k$.
Let $a_0$ be the largest jumping number of $\mJ(\omega_X, f^t)$ which is less than $a_1$.
Then there exist finitely many smooth projective varieties $X^{(i)}$ over $k$ and a nonempty open subset $U \subseteq \Spec A$ satisfying the following.
If $s \in U$ is a closed point and the Frobenius maps
\begin{equation}
\label{eq.IsomorphismCohomologyLemma}
F: H^{\dim X^{(i)}_{s}} (X^{(i)}_{s}, \O_{X^{(i)}_{s}}) \to H^{\dim X^{(i)}_{s}} (X^{(i)}_{s}, \O_{X^{(i)}_{s}})
\end{equation}
are bijective for all $i$, then $\mJ(\omega_{X}, f^{a})_s = \tau(\omega_{X_s}, f_s^{a})$ for all $a_0 \leq a < a_1$.
\end{lemma}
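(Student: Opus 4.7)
The strategy extends the Musta\c{t}\u{a}-Srinivas argument to allow $X$ to be singular. Fix a log resolution $\pi \colon Y \to X$ of $(X, D = \divisor f)$ with $Y$ smooth and $\pi^{-1}(D) \cup \mathrm{Exc}(\pi)$ reduced SNC, and write $\pi^* D = \sum_i b_i F_i$. After spreading out to an $A$-model and shrinking $\Spec A$, $\pi_s$ is a log resolution of $(X_s, D_s)$ for $s$ in some dense open. Since $a_0$ is the largest jumping number strictly below $a_1$, the rounded divisor $\lfloor a \pi^* D \rfloor = \sum_i \lfloor a b_i \rfloor F_i$ is constant in $a$ on $[a_0, a_1)$, and therefore the multiplier module $\mJ(\omega_X, f^a)_s = \pi_{s*} \omega_{Y_s}(-\lfloor a\pi^*D\rfloor_s)$ is constant on that interval. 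The containment $\tau(\omega_{X_s}, f_s^a) \subseteq \mJ(\omega_X, f^a)_s$ is known for $s$ in a dense open by the usual reduction-mod-$p$ comparison, so the task is to establish the reverse inclusion.

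Next, compactify $X \hookrightarrow \overline X$ projectively and extend $\pi$ to a log resolution $\overline\pi \colon \overline Y \to \overline X$ whose total boundary --- the proper transform of $D$, the exceptional locus, and the boundary $\overline X \setminus X$ --- is an SNC divisor with components $\{G_j\}$ on $\overline Y$. Take $\{X^{(i)}\}$ to be the (finitely many) irreducible components of all iterated intersections of the $G_j$'s, enlarged to absorb the auxiliary hyperplane sections needed for the induction of \autoref{lemma.every j}; each $X^{(i)}$ is smooth projective over $k$. Assuming Frobenius bijects on $H^{\dim X^{(i)}_s}(X^{(i)}_s, \O_{X^{(i)}_s})$ for every $i$, \autoref{lemma.every j} upgrades this to bijectivity in all degrees, and then \autoref{lemma.MS lemma5.6} applied to subcollections of $\{G_j\}$, combined with \autoref{lem.SemiSimpleFrobMinusSNC} on $\overline Y_s$, yields bijective Frobenius on $H^j(\overline Y_s, \O_{\overline Y_s}(-A_s))$ for every $j$ and every reduced sub-SNC $A \subseteq \bigcup_j G_j$. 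Serre duality together with the spectral-sequence argument in the proof of \autoref{theorem.SurjectsAllCohomologySNC} then give the surjection
$$R^j \overline\pi_{s*} F^e_* \omega_{\overline Y_s}(A_s) \onto R^j \overline\pi_{s*} \omega_{\overline Y_s}(A_s)$$
for every $j, e$; restricting to the affine $X_s$ produces the analogous pushforward surjection $\pi_{s*} F^e_* \omega_{Y_s}(A_s) \onto \pi_{s*} \omega_{Y_s}(A_s)$ for any reduced sub-SNC divisor $A \subseteq \pi^{-1}(D)$.

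Finally, choose $A$ (depending on $a$) so that twisting this surjection by $f^{\lceil a(p^e-1) \rceil}$ converts it into the assertion that the Cartier map $T^e \circ (\cdot f_s^{\lceil a(p^e-1)\rceil}) \colon F^e_* \omega_{X_s} \to \omega_{X_s}$ sends its image onto $\mJ(\omega_X, f^a)_s$, and realizes every element of $\mJ(\omega_X, f^a)_s$ as $T^e$-image of a test-element multiple of $\omega_{X_s}$. Since $\tau(\omega_{X_s}, f_s^a)$ is characterized as the unique smallest nonzero Cartier submodule of $\omega_{X_s}$ under this structure, and is generated by iterating the Cartier map on a test-element multiple of $\omega_{X_s}$, the surjectivity forces $\mJ(\omega_X, f^a)_s \subseteq \tau$, hence equality. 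Only finitely many shrinkings of $\Spec A$ occur in the argument, so a single $U$ works uniformly in $a \in [a_0, a_1)$.

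The main obstacle is the divisor bookkeeping in this last step: the reduced SNC divisor $A \subseteq \pi^{-1}(D)$ must be chosen so that, after twisting by $f^{\lceil a(p^e-1)\rceil}$, one lands precisely at $\mJ(\omega_X, f^a)_s$ with exactly the $\tau$-Cartier structure. Matching the coefficients $\lceil a(p^e-1)\rceil b_i$ with $p^e \lfloor a b_i \rfloor$ modulo a reduced correction supported on $\bigcup_i F_i$, and checking that this correction genuinely carves out a reduced sub-SNC subdivisor of $\pi^{-1}(D)$, is where the hypotheses $a_1 \in \bZ$, $a_1 \geq 1$, and the absence of jumping numbers in $(a_0, a_1)$ all come into play.
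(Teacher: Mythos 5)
Your setup (log resolution, spreading out, the ``easy'' containment $\tau(\omega_{X_s},f_s^a)\subseteq \mJ(\omega_X,f^a)_s$, and the appeal to the Musta\c{t}\u{a}--Srinivas machinery to get surjectivity of the log trace maps on $\pi_{s*}\O_{Y_s}(K_{Y_s}+F_s)$) matches the paper. But the decisive step --- the reverse containment --- is not actually carried out: you defer the ``divisor bookkeeping'' needed to relate the twist by $f_s^{\lceil a(p^e-1)\rceil}$ to the reduced SNC surjectivity statement, and you acknowledge it as ``the main obstacle'' without resolving it. Moreover, the logical deduction you offer in its place does not work as stated: knowing that $\mJ(\omega_X,f^a)_s$ is a nonzero Cartier submodule on which the twisted trace surjects, combined with the fact that $\tau$ is the \emph{smallest} nonzero Cartier submodule, yields only $\tau\subseteq\mJ_s$ --- the containment you already have --- not $\mJ_s\subseteq\tau$.

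The paper closes this gap differently, and the point is precisely to avoid the bookkeeping at the real exponent $a$. One twists the untwisted surjection by the \emph{integral} Cartier divisor $a_1\pi^*D_s$ via a single application of the projection formula, obtaining surjectivity of $\Tr^e_{a_1 D_s}$ on $\pi_{s*}\O_{Y_s}(K_{Y_s}+F_s-a_1\pi^*D_s)$; since there are no jumping numbers in $(a_0,a_1)$, this sheaf equals $\mJ(\omega_X,f^a)_s$ for \emph{every} $a\in[a_0,a_1)$ at once. Thus $\mJ(\omega_X,f^a)_s$ is an $F$-pure Cartier submodule of $\tau(\omega_{X_s})$ for the structure $\Tr_{a_1D_s}$, hence is contained in $\sigma(\tau(\omega_{X_s}),\Tr_{a_1D_s})$, the \emph{largest} $F$-pure Cartier submodule. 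Finally, the formula $\tau(\omega_{X_s},f_s^a)=\sum_{e\gg0}\Tr^e\bigl(F^e_*(f_s^{\lceil p^ea\rceil}\tau(\omega_{X_s}))\bigr)$ from \cite[Lemma 3.21]{BlickleSchwedeTakagiZhang}, together with the inequality $\lceil p^ea\rceil\le(p^e-1)a_1$ valid for $e\gg0$ because $a<a_1$, gives $\sigma(\tau(\omega_{X_s}),\Tr_{a_1D_s})\subseteq\tau(\omega_{X_s},f_s^a)$, closing the chain. This use of $\sigma$ and of the integrality of $a_1$ is exactly the missing ingredient in your argument; without it, the hypotheses $a_1\in\bZ$ and $a<a_1$ never actually enter, which is a sign the proof is incomplete.
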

\begin{proof}
Throughout this proof, we let $p(s)$ denote the characteristic of the residue field $k(s)$ of a closed point $s \in \Spec A$.
We fix a log resolution $\pi : Y \to X$ of $(X, f)$ and let $F = (\Div_Y(f))_{\red}= (\pi^{*} D)_{\red}$.
It follows from the proof of \cite[Theorem 5.10]{MustataSrinivasOrdinary} that there exist finitely many smooth projective varieties $X^{(i)}$ over $k$ and a nonempty open subset $U \subseteq \Spec A$
such that the bijections of \eqref{eq.IsomorphismCohomologyLemma} above guarantee that the log trace maps
\begin{equation*}
F^e_* {\pi_s}_* \O_{Y_s}(K_{Y_s} + F_{s}) \to {\pi_s}_* \O_{Y_s}(K_{Y_s} + F_{s})
\end{equation*}
surject for each $e > 0$.
By the projection formula, we obtain the surjectivity of the following maps
\begin{equation*}
F^e_* {\pi_s}_* \O_{Y_s}(K_{Y_s} + F_{s} -  p(s)^e a_1 \pi^*D_s) \to {\pi_s}_* \O_{Y_s}(K_{Y_s} + F_{s} - a_1 \pi^* D_s),
\end{equation*}
which we identify with
\begin{equation}
\label{eq.inLemmaSurjectivity}
\begin{array}{rl}
\Tr^e_{a_1 D_s} : & F^e_* {\pi_s}_* \O_{Y_s}(K_{Y_s} + F_{s} - a_1 \pi^*  D_s) \\
& \hspace*{3.6em} \xrightarrow{\Tr^e(F^e_* f_s^{a_1(p(s)^e - 1)} \cdot \blank)}  {\pi_s}_* \O_{Y_s}(K_{Y_s} + F_{s} - a_1 \pi^* D_s).
\end{array}
\end{equation}
Note that the map of \eqref{eq.inLemmaSurjectivity} is the restriction of $$\Tr^e_{a_1 D_s} : F^e_* \omega_{X_s} \xrightarrow{\Tr^e(F^e_* f_s^{a_1(p(s)^e - 1)}\cdot \blank) } \omega_{X_s}$$ which can also be identified with $\Tr^e : F^e_* (f_s^{a_1(p(s)^e-1)} \omega_{X_s}) \to \omega_{X_s}$.
Shrinking $U$ if necessary and applying Proposition \autoref{prop.TestIdealsCoincidesWithMultiplierIdeals},  we can assume that the following holds for all closed points $s \in U$:
\begin{itemize}
\item[(a)]  $\pi_s: Y_s \to X_s$ is a log resolution of $(X_s, f_s)$.
\item[(b)]  $\mJ(\omega_{X}, f^a)_s= \mJ(\omega_{X_s}, f_s^{a})={\pi_s}_* \O_{Y_s}(\lceil K_{Y_s}- a \pi_s^*  D_s \rceil)$ for all $a \in \R_{\ge 0}$.
\item[(c)]  $\mJ(\omega_{X})_s = \tau(\omega_{X_s})$.
\end{itemize}
By (b), we obtain that for sufficiently small $\epsilon>0$,
\begin{equation}
\label{eq.ZerothContainment}
\begin{array}{rl}
{\pi_s}_* \O_{Y_s}(K_{Y_s} + F_{s} - a_ 1 \pi^* D_s)
=  \mJ(\omega_{X_s}, f_s^{a_1-\epsilon})
= & \mJ(\omega_{X}, {f}^{a_1-\epsilon})_s\\
= & \mJ(\omega_{X}, {f}^{a_0})_s.
 \end{array}
\end{equation}

We now fix an arbitrary closed point $s \in U$.
For all $a_0 \leq a < a_1$, we have
\begin{equation}
\label{eq.FirstContainment}
 \tau(\omega_{X_s}, f_s^{a})
\subseteq  \big({\pi_s}_* \O_{Y_s}(\lceil K_{Y_s} - a \pi_s^* D_s \rceil) \big)
=  \mJ(\omega_{X}, {f}^{a})_s,
\end{equation}
where the first containment follows from the universal property defining $\tau(\omega_{X_s}, f_s^{a})$.\footnote{$\tau(\omega_{X_s}, f_s^{a})$ is the smallest nonzero submodule of $\omega_X$ stable under $\Tr^e(F^e_* f^{\lceil a(p^e - 1) \rceil} \blank)$ for all $e > 0$.}  This is the ``easy'' containment, see for example the argument of \cite{SmithFRatImpliesRat} or \cite[Proposition 4.2]{MustataSrinivasOrdinary}.
On the other hand, $(\tau(\omega_{X_s}), \Tr : F^e_* \omega_{X_s} \to \omega_{X_s})$ is a Cartier module which implies that $$\Tr^e(F^e_* (f_s^{(p(s)^e - 1)a_1} \tau(\omega_{X_s}))) \subseteq \Tr^e(F^e_* \tau(\omega_{X_s})) \subseteq \tau(\omega_{X_s}).$$
Hence we obtain a Cartier-module structure on $\tau(\omega_{X_s})$ by restricting same map $\Tr^1_{a_1 D_s}$ we studied immediately after \eqref{eq.inLemmaSurjectivity}.
It follows then that for any $a_0 \leq a < a_1$
\begin{equation}
\label{eq.SecondContainment}
\begin{array}{rl}
\tau(\omega_{X_s}, f_s^{a})
=  & \sum_{e \gg 0} \Tr^e\big(F^e_* (f_s^{\lceil p(s)^e a \rceil} \tau(\omega_{X_s}))\big)  \\
\supseteq & \sum_{e \gg 0} \Tr^e\big(F^e_* (f_s^{(p(s)^e - 1)a_1} \tau(\omega_{X_s}))\big) \\
= & \sigma( \tau(\omega_{X_s}), \Tr_{a_1 D_s}),
\end{array}
\end{equation}
where the first equality follows from \cite[Lemma 3.21]{BlickleSchwedeTakagiZhang}.

Now \eqref{eq.inLemmaSurjectivity} and \eqref{eq.ZerothContainment} imply that $\mJ(\omega_{X}, {f}^{a_0})_s = \mJ(\omega_{X}, {f}^{a})_s  $ is an $F$-pure Cartier module under the action of $\Tr_{a_1 D_s}$.  It is certainly contained in $\tau(\omega_{X_s})$ by (c).  Hence, since $\sigma( \tau(\omega_{X_s}), \Tr_{a_1 D_s})$ is the largest $F$-pure Cartier module contained in $\tau(\omega_{X_s})$, we have
\begin{equation}
\label{eq.ThirdContainment}
\mJ(\omega_{X}, {f}^{a})_s   \subseteq \sigma( \tau(\omega_{X_s}), \Tr_{a_1 D_s}).
\end{equation}
Combining \eqref{eq.ThirdContainment}, \eqref{eq.SecondContainment} and \eqref{eq.FirstContainment}, we obtain for any $a_0 \leq a < a_1$ that
\[
\begin{array}{rl}
\mJ(\omega_{X}, {f}^{a})_s \subseteq \sigma( \tau(\omega_{X_s}), \Tr_{a_1 D_s}) \subseteq \tau(\omega_{X_s}, f_s^{a}) \subseteq \mJ(\omega_{X}, {f}^{a})_s
\end{array}
\]
which completes the proof of the lemma.
\end{proof}

Now we come to the theorem from the introduction.
\begin{theorem}
\label{thm.EquivalentConjectures}
Conjecture \autoref{test ideal conj} holds if and only if Conjecture \autoref{MS conj} holds.
\end{theorem}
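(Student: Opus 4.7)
The direction ($\Rightarrow$) is immediate: specializing \autoref{test ideal conj} to smooth $X$ with trivial boundary recovers \cite[Conjecture~1.2]{MustataSrinivasOrdinary}, and the main theorem of that paper deduces \autoref{MS conj} from this smooth case. The substance thus lies in the converse, and my plan is to reduce the general statement to \autoref{lem.KeyCase}, which already handles the normal affine case with trivial boundary and principal ideal $\ba = (f)$.

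The plan for ($\Leftarrow$) proceeds by four reductions. Using parts~(i) of \autoref{lem.PropertiesOfTest} and \autoref{lem.PropertiesOfMultiplier}, I would replace $\tau(X,\Delta,\ba^{\lambda})$ and $\mJ(X,\Delta,\ba^{\lambda})$ by the corresponding $\omega$-modules with boundary $\Delta' := K_X + \Delta$, now merely $\bQ$-Cartier. The assertion is Zariski local, so I may assume $X = \Spec R$ is normal affine. To kill $\Delta'$, I would pick $m$ with $m\Delta'$ Cartier, form the normalized index-one cover $\pi \colon Y \to X$ associated to a trivialization of $\O_X(m\Delta')$, absorb any residual integral divisor via parts~(ii), and then apply parts~(iii) to identify both $\tau(\omega_X,\Delta',\ba^{\lambda})$ and $\mJ(\omega_X,\Delta',\ba^{\lambda})$ with trace images of $\tau(\omega_Y,(\ba\O_Y)^{\lambda})$ and $\mJ(\omega_Y,(\ba\O_Y)^{\lambda})$ on $Y$; it then suffices to compare these on the normal affine $Y$ with trivial boundary. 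Finally, to pass from a general ideal $\ba$ to a principal one, I would observe that the argument of \autoref{lem.KeyCase} carries over verbatim once $f^{a_1(p^e-1)}$ is replaced by $\ba^{\lceil a_1(p^e-1)\rceil}$: the surjectivity input \autoref{thm.MustataSrinivasSurjectivityRestated} is already formulated for an arbitrary reduced simple normal crossings divisor $F = (\pi^{-1}\ba)_{\red}$ on a log resolution of $(Y, \ba)$.

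Applying this extended form of \autoref{lem.KeyCase} yields, for each chosen threshold $a_1$, a finite collection of auxiliary smooth projective varieties and an open $U \subseteq \Spec A$ on which $\tau_s = \mJ_s$ holds for every $\lambda \in [a_0, a_1)$, where $a_0$ is the largest jumping number below $a_1$. Jumping numbers of both sides form a discrete subset of $\bQ$ with bounded denominator on a fixed log resolution, and a Skoda-type periodicity relates $\lambda \gg 0$ to bounded $\lambda$ by twisting with powers of $\ba$; hence finitely many such slabs cover every $\lambda \in \R_{\ge 0}$ that can arise. Invoking \autoref{MS conj} simultaneously for all the auxiliary smooth projective varieties so produced, using the finite-collection form of \autoref{rem.FinitelyManySmoothIsOk}, yields a single Zariski-dense set of closed points on which the conjectured equality holds for all $\lambda$ at once. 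The strengthening to finitely many triples then follows by intersecting these dense sets.

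The principal obstacle is Step~2: one must execute the cyclic cover together with the accompanying log resolution of $(Y,\ba\O_Y)$ in families, arranging the spreading-out over $A$ so that the trace identities of parts~(iii) of \autoref{lem.PropertiesOfTest} and \autoref{lem.PropertiesOfMultiplier} remain valid after reduction modulo $p$ at a dense set of closed points. The pleasant feature of the present approach, by contrast with the route via semistable reduction, is that normality of $Y$ is all that is required; no resolution of singularities of $X$ itself is used. Once this bookkeeping is in place, the remaining reductions are formal consequences of the extended \autoref{lem.KeyCase}.
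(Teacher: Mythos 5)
There is a genuine gap at the point where you claim that the ``extended form'' of \autoref{lem.KeyCase} covers all relevant exponents. \autoref{lem.KeyCase} is stated (and proved) only for an \emph{integer} threshold $a_1$, and this is essential: the surjectivity \autoref{eq.inLemmaSurjectivity} is obtained from \autoref{thm.MustataSrinivasSurjectivityRestated} by the projection formula, i.e.\ by twisting the untwisted trace surjectivity with the pullback of the line bundle $\O_X(-a_1 D)$, which requires $a_1 D$ to be an integral Cartier divisor on $X$; likewise the identification of the twisted map with $\Tr^e(F^e_* f_s^{a_1(p^e-1)}\cdot \blank)$ needs $a_1(p^e-1)\in\bZ$. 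The jumping numbers $\lambda_j$ of $\mJ(\omega_X,f^{\lambda})$ in $[0,1]$ are in general non-integral rationals, so the slabs $[a_0,a_1)$ you get from integer thresholds only cover the top interval $[\lambda_{n-1},1)$; no choice of integer $a_1$ reaches the intermediate intervals $[\lambda_{j-1},\lambda_j)$, and ``Skoda-type periodicity'' (period $1$) does not help inside $[0,1]$. The paper's proof supplies exactly the missing mechanism: after reducing to principal $\ba=(f)$ (via \cite[Proposition 4.3]{MustataSrinivasOrdinary}), for each rational jumping number $\lambda_j=a_1/m$ it passes to a further finite cover $\beta\colon Z\to X$ extracting an $m$-th root $g=f^{1/m}$, applies \autoref{lem.KeyCase} upstairs where the threshold $a_1$ \emph{is} an integer, and transfers the equality back to $X$ using the trace transformation rules of \autoref{lem.PropertiesOfMultiplier}~(iii) and \autoref{lem.PropertiesOfTest}~(iii). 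Your proposal contains no analogue of this second cover, so the argument as written does not prove the equality for $\lambda$ below the last jumping number.

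A secondary problem is your claim that the lemma extends ``verbatim'' to a non-principal ideal by replacing $f^{a_1(p^e-1)}$ with $\ba^{\lceil a_1(p^e-1)\rceil}$. For non-principal $\ba$ one has $\ba\O_Y=\O_Y(-G)$ with $G$ not the pullback of a Cartier divisor on $X$, so the projection-formula twist producing the key surjectivity is unavailable, and the Cartier-module structure used to form $\sigma(\tau(\omega_{X_s}),\Tr_{a_1 D_s})$ has no direct analogue. The paper avoids this entirely by reducing to the principal case first; your Step~3 (index-one cover killing $K_X+\Delta$ and absorbing the residual Cartier divisor via parts~(ii) and~(iii)) does agree with the paper's reduction via the cover $\alpha\colon W\to X$, and the forward direction is fine (the paper cites \cite{MustataOrdinary2} for it), but the treatment of non-integral thresholds — and with it the heart of the converse — is missing.
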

\begin{proof}
It is proved by \cite{MustataOrdinary2} that if Conjecture \autoref{test ideal conj} holds, then Conjecture \autoref{MS conj} holds as well, so we consider the converse.  We recall the setup of Conjecture \autoref{test ideal conj}.
Let $X$ be a normal variety over a field $k$ of characteristic zero.
Suppose that $\Delta$ is a $\Q$-divisor on $X$ such that $K_X+\Delta$ is $\Q$-Cartier, and $\ba$ is a nonzero ideal on $X$.
Suppose also that we are given a model of $(X, \Delta, \ba)$ over a finitely generated $\Z$-subalgebra $A$ of $k$.
Under these hypotheses, we need to show that there exists a Zariski-dense set of closed points $S  \subseteq \Spec A$ such that
\begin{equation}\label{eq.testVsMult}
\tau(X_s, \Delta_s, \ba_s^{\lambda})=\mathcal{J}(X, \Delta, \ba^{\lambda})_s \textup{ for all $\lambda \in \R_{\ge 0}$ and all $s \in S$.}
\end{equation}

Since proving Conjecture \ref{test ideal conj} for $(X, \Delta, \ba)$ is equivalent to proving it simultaneously for all $(U_i, \Delta |_{U_i}, \ba |_{U_i})$, where $X=\bigcup_i U_i$ is a finite affine open cover of $X$, it is enough to consider the case when $X$ is affine.  Likewise as in Remark \autoref{rem.FinitelyManySmoothIsOk} we may assume that $k$ is algebraically closed since the formation of the multiplier ideal commutes with field base change.
Arguing exactly as in \cite[Proposition 4.3]{MustataSrinivasOrdinary}, one may also assume that $\ba = \langle f \rangle$ is a principal ideal.

Now, let $\alpha : W \to X$ be a finite cover from a normal $W$ such that $\alpha^* (K_X + \Delta)$ is an integral Cartier divisor.
After possibly enlarging $A$, we may assume that $\alpha_s:W_s \to X_s$ is a finite separable cover with $W_s$ normal and $\alpha_s^* (K_{X_s}+\Delta_s)$ is Cartier for all closed points $s \in \Spec A$.
Note that by Lemmas \ref{lem.PropertiesOfMultiplier} (iii) and \ref{lem.PropertiesOfTest} (iii),
\begin{align*}
\mJ(X, \Delta, f^{\lambda}) =& \mJ(\omega_X, K_X + \Delta, f^{\lambda})\\
=& \Tr_{W/X}\big(\alpha_* \mJ(\omega_W, \alpha^*(K_X + \Delta), f^{\lambda})\big),\\
\tau(X_s, \Delta_s, f_s^{\lambda})=& \tau(\omega_{X_s}, K_{X_s} + \Delta_s, f_s^{\lambda})\\
=& \Tr_{W_s/X_s}\big({\alpha_s}_* \tau(\omega_{W_s}, \alpha_s^*(K_{X_s} + \Delta_s), f_s^{\lambda})\big).
\end{align*}
Since $\alpha^*(K_X + \Delta)$ (resp. $\alpha_s^*(K_{X_s} + \Delta_s)$) is Cartier, by Lemma \ref{lem.PropertiesOfMultiplier} (ii) (resp. \ref{lem.PropertiesOfTest} (ii)), we can pull it out from the multiplier (resp. test) module.
In particular, instead of showing
\eqref{eq.testVsMult},
it suffices to prove that there exists a Zariski-dense set of closed points $S  \subseteq \Spec A$ such that
\begin{equation}
\label{test submod=multsubmod}
\tau(\omega_{X_s}, f_s^{\lambda})=\mathcal{J}(\omega_X, f^{\lambda})_s \textup{ for all $\lambda \in \R_{\ge 0}$ and all $s \in S$.}
\end{equation}
Since both $\tau(\omega_{X_s}, f_s^{\lambda})$ and $\mJ(\omega_X, f^{\lambda})$ are periodic in $\lambda$ with period $1$, it is enough to show \eqref{test submod=multsubmod} for all $\lambda \in [0, 1]$.

Next, we let $0 = \lambda_0 < \lambda_1 < \ldots < \lambda_n = 1$ denote the distinct jumping numbers of $\mJ(\omega_X, f^{\lambda})$ in $[0,1]$.
Note that the $\lambda_j$ are rational numbers.

\begin{claim}\label{keyclaim}
There exist a nonempty open subset $U \subseteq \Spec A$ and finitely many smooth projective varieties $X^{(i)}_{j}$ over $k$ for each $\lambda_j$ satisfying the following.
If $s \in U$ is a closed point and the action of Frobenius on $H^{\dim X^{(i)}_{j,s}}(X^{(i)}_{j,s}, \O_{X^{(i)}_{j,s}})$ is bijective for all $i$, then $\tau(\omega_{X_s}, f_s^{\lambda})=\mathcal{J}(\omega_X, f^{\lambda})_s$ for all $\lambda_{j-1} \leq \lambda < \lambda_j$.
\end{claim}
\begin{proof}[Proof of Claim \ref{keyclaim}]
Write $\lambda_j = a_1/m$ with $a_1, m$ positive integers.
Let $\beta : Z \to X$ be a finite cover from a normal $Z$ such that $f^{1/m} =: g \in \O_Z$ (for example, extract an $m^{\rm th}$ root of $f$ and normalize).
Since $\lambda_j$ is a jumping number of $\mJ(\omega_X, f^{\lambda})$, by Lemma \ref{lem.PropertiesOfMultiplier} (iii), $a_1$ is a jumping number of $\mJ(\omega_Z, g^{t})$.  Let $a_0$ denote the jumping number of $\mJ(\omega_Z, g^{t})$ immediately before $a_1$.
Note that $\lambda_{j} > a_0/m \geq \lambda_{j-1}$.

By choosing a small $U$, we can assume
that the following holds for all closed points $s \in U$:
\begin{itemize}
\item[(a)] $\beta_s:Z_s \to X_s$ is a finite separable cover with $Z_s$ normal.
\item[(b)] $\tau(\omega_{X_s}, f_s^{\lambda}) \subseteq \mJ(\omega_X, f^{\lambda})_s$ for all $\lambda_{j-1} \le \lambda <\lambda_j$.
\item[(c)] $\tau(\omega_{Z_s}, g_s^{a_0})=\mJ(\omega_{Z}, g^{a_0})_s$.
\end{itemize}
Applying Lemmas \ref{lem.PropertiesOfMultiplier} (iii) and \ref{lem.PropertiesOfTest} (iii) to (c), we have $\mJ(\omega_{X}, f^{a_0/m})_s = \tau(\omega_{X_s}, f_s^{a_0/m})$.
Since $\mJ(\omega_{X}, f^{\lambda_{j-1}}) = \mJ(\omega_{X}, f^{\lambda})$ for all $\lambda_{j-1} \leq \lambda \leq a_0/m$, we see immediately from (b) that $\mJ(\omega_{X}, f^{\lambda})_s = \tau(\omega_{X_s}, f_s^{\lambda})$ for $\lambda_{j-1} \leq \lambda \leq a_0/m$.

By Lemma \autoref{lem.KeyCase}, possibly shrinking $U$, we can prove that
$\tau(\omega_{Z_s}, g_s^{t})=\mathcal{J}(\omega_Z, g^{t})_s$ for all $a_0 \leq t < a_1$.
Applying Lemmas \ref{lem.PropertiesOfMultiplier} (iii) and \ref{lem.PropertiesOfTest} (iii) again, we have $\mJ(\omega_{X}, f^{\lambda})_s = \tau(\omega_{X_s}, f_s^{\lambda})$ for $a_0/m \leq \lambda \leq a_1/m=\lambda_j$.
The proof of the claim is completed.
\end{proof}
Now we return to the proof of the theorem.  For each jumping number $\lambda_j$, assuming Conjecture \autoref{MS conj} recalling Remark \autoref{rem.FinitelyManySmoothIsOk}, we can prove the result for $\lambda \in [\lambda_{j-1}, \lambda_j]$ by finding the finitely many $X^{(i)}_j$ guaranteed by the claim.  Since there are only finitely many jumping numbers in $[0,1]$, the theorem follows.
\end{proof}

\begin{remark}
The referee asked the natural question of whether assuming Conjecture \autoref{MS conj} for $V$ of dimension $\leq n - 1$ is enough to imply Conjecture \autoref{test ideal conj} for $X$ of dimension $\leq n$ (this might be expected since one would expect the $F$-singularities of $X_s$ to be governed by the ordinarity of the exceptional divisors of a resolution).  This does not  seem to work at least with our proof.
The problem is that at the start of the proof of Lemma \autoref{lem.KeyCase} we argue as in \cite[Theorem 5.10]{MustataSrinivasOrdinary} where one needs to assume Conjecture \autoref{MS conj} also in dimension $n$ (because of an appeal to \cite[Corollary 5.7]{MustataSrinivasOrdinary}).  Perhaps there are ways around this issue however.
\end{remark}

We conclude with a natural question that was pointed out by the referee and others.

\begin{question}
Does Conjecture \autoref{MS conj} imply that a variant of the non-LC ideal \cite{AmbroQuasiLog,FujinoNonLCSheaves,FujinoSchwedeTakagiSupplements} reduce to $\sigma(X_s, \Delta_s)$ for all $s$ in a Zariski-dense set of closed points $S \subseteq \Spec A$ (with notation as above)?
\end{question}

We hope to consider this question at a later time.

\def\cfudot#1{\ifmmode\setbox7\hbox{$\accent"5E#1$}\else
  \setbox7\hbox{\accent"5E#1}\penalty 10000\relax\fi\raise 1\ht7
  \hbox{\raise.1ex\hbox to 1\wd7{\hss.\hss}}\penalty 10000 \hskip-1\wd7\penalty
  10000\box7}
\providecommand{\bysame}{\leavevmode\hbox to3em{\hrulefill}\thinspace}
\providecommand{\MR}{\relax\ifhmode\unskip\space\fi MR}
\providecommand{\MRhref}[2]{%
  \href{http://www.ams.org/mathscinet-getitem?mr=#1}{#2}
}
\providecommand{\href}[2]{#2}

\end{document}